\def\@secnumfont{\mdseries}
\def\section{\@startsection{section}{1}%
 \z@{.7\linespacing\@plus\linespacing}{.5\linespacing}%
 {\normalfont\scshape\centering}}
\def\subsection{\@startsection{subsection}{2}%
 \z@{.5\linespacing\@plus.7\linespacing}{-.5em}%
 {\normalfont\bfseries}}
\patchcmd{\@thm}{\let\thm@indent\indent}{\let\thm@indent\noindent}{}{}
\patchcmd{\@thm}{\thm@headfont{\scshape}}{\thm@headfont{\bfseries}}{}{}
\DeclareFontFamily{U}{matha}{\hyphenchar\font45}
\DeclareFontShape{U}{matha}{m}{n}{
      <5> <6> <7> <8> <9> <10> gen * matha
      <10.95> matha10 <12> <14.4> <17.28> <20.74> <24.88> matha12
      }{}
\DeclareSymbolFont{matha}{U}{matha}{m}{n}
\DeclareMathSymbol{\acap}{2}{matha}{"58}
\DeclareMathSymbol{\acup}{2}{matha}{"59}
\newtheorem*{d2}{D2 problem}
\newtheorem{theorem}{Theorem}[section]
\newtheorem{lemma}[theorem]{Lemma}
\newtheorem{corollary}[theorem]{Corollary}
\newtheorem{proposition}[theorem]{Proposition}
\newtheorem*{proposition*}{Proposition}
\newtheorem*{theorem*}{Theorem}
\newtheorem{question}[theorem]{Question}
\theoremstyle{remark}
\newtheorem*{remark}{Remark}
\newtheorem*{convention}{Convention}
\newtheorem*{definition}{Definition}
\newtheorem*{definition-notation}{Definition and notation}
\newtheorem*{notation}{Notation}
\newcommand*{\textlabel}[2]{%
 \edef\@currentlabel{#1}% Set target label
 \phantomsection% Correct hyper denote byence link
 #1\label{#2}% Print and store label
}
\newcommand{\ttmoplus}[2]{\mbox{\footnotesize{$\textstyle\bigoplus\limits_{#1}^{#2}$}}}
\def\lk{\op{lk}}
\def\Q{\mathbb{Q}} 
\def\Z{\mathbb{Z}} 
\def\R{\mathbb{R}} 
\def\C{\mathbb{C}}
\def\O{\mathbb{O}}
\def\N{\mathbb{N}} 
\def\Hr{\mathbb{H}}
\def\rp{\R\textup{P}}
\def\cp{\C\textup{P}}
\def\OP{\O\textup{P}}
\def\HP{\mathbb{H}\textup{P}}
\def\bnm{\begin{enumerate}} 
\def\enm{\end{enumerate}}
\def\sc{\op{sc}}
\def\high{\op{hc}}
\newcommand{\ttmfrac}[2]{\mbox{$\textstyle\frac{#1}{#2}$}}
\DeclareMathAlphabet{\mathbf}{OML}{cmm}{b}{it}
\def\bn{\begin{enumerate}} 
\def\en{\end{enumerate}}
\def\ba{\begin{array}} 
\def\ea{\end{array}} 
\def\bp{\begin{pmatrix}}
\def\ep{\end{pmatrix}}
\def\bn{\begin{enumerate}} 
\def\en{\end{enumerate}}
\def\ba{\begin{array}} 
\def\ea{\end{array}} 
\def\bp{\begin{pmatrix}}
\def\ep{\end{pmatrix}}
\def\op{\operatorname}
\def\PP{\mathcal{P}}
\def\MM{\mathcal{M}}
\def\ub{\underbrace}
\def\us{\underset}
\def\ol{\overline}
\def\rp{\R\textup{P}}
\def\Def{\text{\normalfont def}}
\newcommand{\M}[2]{{\mathcal M^{#1}_{#2}}}
\newcommand{\Mh}[2]{{\overline{\mathcal M}^{#1}_{#2}}}
\newcommand{\Cat}{\mathrm{Cat}}
\newcommand{\Top}{\mathrm{Top}}
\newcommand{\Diff}{\mathrm{Diff}}
\newcommand{\PL}{\mathrm{PL}}
\numberwithin{equation}{section}
\begin{document}

\title{Connected sum decompositions of high-dimensional manifolds}

\author[Bokor]{Imre Bokor}
\address{13 Holmes Avenue, Armidale, NSW 2350, Australia}
\email{ibokor@bigpond.net.au}

\author[Crowley]{Diarmuid Crowley}
\address{School of Mathematics and Statistics, University of Melbourne, Parkville, VIC, 3010, Australia}
\email{dcrowley@unimelb.edu.au}

\author[Friedl]{Stefan Friedl}
\address{Fakult\"at f\"ur Mathematik\\ Universit\"at Regensburg\\   Germany}
\email{sfriedl@gmail.com}

\author[Hebestreit]{Fabian Hebestreit}
\address{Rheinische Friedrich-Wilhelms-Universit\"at Bonn, Mathematisches Institut,
Endenicher Allee 60, 53115 Bonn, Germany}
\email{f.hebestreit@math.uni-bonn.de}

\author[Kasprowski]{Daniel Kasprowski}
\address{Rheinische Friedrich-Wilhelms-Universit\"at Bonn, Mathematisches Institut,
Endenicher Allee 60, 53115 Bonn, Germany}
\email{kasprowski@uni-bonn.de}

\author[Land]{Markus Land}
\address{Fakult\"at f\"ur Mathematik\\ Universit\"at Regensburg\\   Germany}
\email{markus.land@mathematik.uni-regensburg.de}

\author[Nicholson]{Johnny Nicholson}
\address{Department of Mathematics, UCL, Gower Street, London, WC1E 6BT, United Kingdom}
\email{j.k.nicholson@ucl.ac.uk}

\maketitle

\begin{abstract}
The classical Kneser-Milnor theorem says that every closed oriented connected 3-dimensional manifold admits a unique connected sum decomposition into mani\-folds that cannot be decomposed any further.  We discuss to what degree such decompositions exist in higher dimensions and we show that in many settings uniqueness fails in higher dimensions. 
\end{abstract}

\section{Introduction}
Consider the set $\M{\Cat}{n}$ of $n$-dimensional, oriented $\Cat$-isomorphism classes of $\Cat$-mani\-folds, where $\Cat = \Top, \PL$ or $\Diff$; unless explicitly stated, all manifolds are assumed non-empty, closed, connected and oriented. $\M{\Cat}{n}$ forms a monoid under connected sum (see Section~\ref{section:connected-sum})
as do its subsets
\[ \M{\Cat,\sc}{n}\,=\, \{M\in \M{\Cat}{n}\,|\, M\mbox{ is simply connected}\}\]
and
\[ \M{\Cat,\high}{n}\,=\, \{M\in \M{\Cat}{n}\,|\, M\mbox{ is highly connected}\};\]
here an $n$-manifold  $M$ is called \emph{highly connected} if $\pi_i(M)=0\mbox{ for }i=0,\dots,\lfloor \ttmfrac{n}{2}\rfloor-1$. Recall that the theorems of Rad\'o \cite{Rad26} and Moise \cite{Mo52,Mo77} show $\M{\Top}{n} = \M{\PL}{n} = \M{\Diff}{n}$ for $n \leq 3$, and by Cerf's work~\cite[p.~IX]{Ce68}  that  $\M{\PL}{n} = \M{\Diff}{n}$ for $n \leq 6$.
These monoids are countable. For $\PL$ and $\Diff$ this follows from the fact that triangulations exist.
For $\Top$ this follows from work of Cheeger and Kister~\cite{CK70}.

In this paper we want to study the question whether or not these monoids are unique factorisation monoids. 
First we need to make clear what we mean by a unique factorisation monoid.

\begin{definition}
\mbox{}
\bnm
\item Let $\MM$ be an abelian monoid (written multiplicatively). We say $m\in M$ is \emph{prime} if $m$ is not a unit
and if it divides a product only if it divides one of the factors.
\item  Given a monoid $\MM$ we denote by $\MM^*$ the units of $\MM$. We write $\overline{\MM}:=\MM/\MM^*$. 
\item Let $\MM$ be an abelian monoid. We denote by $\PP(\MM)$ the set of prime elements in $\overline{\MM}$. We say $\MM$ is a \emph{unique factorisation monoid} if the canonical monoid morphism $\N^{\PP(\MM)}\to \ol{\MM}$ is an isomorphism. 
\enm
\end{definition}

In dimensions $1$ and $2$ we of course have $\M{\Cat}{1} =\Mh{\Cat}{1}= \{[S^1]\}$ and $\M{\Cat}{2} =\Mh{\Cat}{2} \cong \N$ via the genus. In particular these monoids are unique factorisation monoids. In dimension $3$ there is the celebrated \emph{prime decomposition theorem}
which was stated and proved, in rather different language, by Kneser \cite{Kn29} and Milnor \cite{Mi62}. See also \cite[Chapter~3]{He76} for a proof. 

\begin{theorem}[Kneser-Milnor]\label{thm:kneser-milnor}\mbox{}
\bnm[font=\normalfont]
\item The monoid $\M{\Cat}{3}$ has no non-trivial units.
\item The monoid $\M{\Cat}{3}=\Mh{\Cat}{3}$ is a unique factorisation monoid.
\enm
\end{theorem}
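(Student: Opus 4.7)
The plan is to treat (1) and (2) separately.

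For (1), van Kampen's theorem applied to the standard splitting $M\#N = (M\sm \interior{B^3}) \cup_{S^2} (N \sm \interior{B^3})$ gives $\pi_1(M\#N) \cong \pi_1(M)*\pi_1(N)$, so $M\#N \cong S^3$ forces both $\pi_1(M)$ and $\pi_1(N)$ to be trivial. A Mayer--Vietoris computation then shows $M$ and $N$ are integer homology $3$-spheres, and Perelman's resolution of the Poincar\'e conjecture concludes $M \cong N \cong S^3$. Hence $[S^3]$, which is the monoid identity, is the only unit.

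For (2) I would split existence and uniqueness of prime decompositions. Existence is Kneser's theorem: fix a triangulation of $M$ and, given any system $\Sigma$ of pairwise disjoint, pairwise non-parallel essential $2$-spheres in $M$, put $\Sigma$ in normal form with respect to the triangulation. The combinatorics of normal surfaces bounds the number of components of $\Sigma$ linearly in the number of simplices, so any iterated process of cutting along essential $2$-spheres terminates, producing a finite decomposition into prime factors.

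For uniqueness, let $\Sigma_1, \Sigma_2 \subset M$ be sphere systems realising two given prime decompositions and put them in general position, so that $\Sigma_1 \cap \Sigma_2$ is a disjoint union of circles. An innermost-circle argument on $\Sigma_2$ produces a disk $D$ in some component of $\Sigma_2$ with $\interior{D} \cap \Sigma_1 = \emptyset$; surgering the sphere of $\Sigma_1$ containing $\partial D$ along $D$ strictly reduces $|\Sigma_1 \cap \Sigma_2|$ and alters the associated prime decomposition only by insertion or deletion of $S^3$ summands, which is harmless by (1). Iterating yields $\Sigma_1 \cap \Sigma_2 = \emptyset$, and the pieces cut out by $\Sigma_1 \cup \Sigma_2$ then simultaneously refine both decompositions. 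The algebraic matching of the resulting prime factors follows from Grushko's theorem on the uniqueness of free-product decompositions of finitely generated groups, combined with (1) to identify simply connected summands with $S^3$ and the fact that $S^2 \times S^1$ is the only orientable prime $3$-manifold with $\pi_1 \cong \Z$.

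I expect the main obstacle to be the innermost-disk surgery step: one must check that after each surgery the sphere system remains essential and pairwise non-parallel up to cancellation of $S^3$ summands, and that the iteration actually terminates. A secondary subtlety is the treatment of $S^2 \times S^1$ summands, which contribute the non-trivial $\pi_2$ and infinite cyclic free factors of $\pi_1(M)$; their multiplicity must be read off geometrically (or from the first Betti number) rather than purely from the algebraic uniqueness of the free product, and it is here that the distinctive geometry of $3$-manifolds enters essentially.
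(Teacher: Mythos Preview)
The paper does not actually prove this theorem; it is stated with attribution to Kneser and Milnor, together with a pointer to \cite[Chapter~3]{He76} for a complete proof. Your sketch is broadly along the classical lines found there, but two points deserve comment.

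For (1), invoking Perelman is correct but anachronistic and unnecessary. The classical argument uses Alexander's theorem: any smoothly (or PL) embedded $2$-sphere in $S^3$ bounds a $3$-ball on each side. If $M\#N \cong S^3$, the connecting sphere therefore bounds a ball, so $M$ with an open ball removed is itself a ball, whence $M\cong S^3$. No appeal to the Poincar\'e conjecture is needed, and indeed Kneser and Milnor had none available.

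For (2), your existence sketch via Kneser's normal-surface bound and the innermost-disk reduction to disjoint sphere systems are the standard route. The gap is in the final step. Once $\Sigma_1$ and $\Sigma_2$ are disjoint and the pieces cut out by $\Sigma_1\cup\Sigma_2$ give a common refinement of both decompositions, you invoke Grushko to match the prime factors algebraically. This does not work: Grushko only matches the fundamental groups of the summands up to isomorphism, and distinct prime $3$-manifolds can have isomorphic fundamental groups (e.g.\ the lens spaces $L(7,1)$ and $L(7,2)$). The correct finish is purely geometric and is already implicit in what you wrote: since each summand $M_i$ of the first decomposition is prime, any further splitting of $M_i$ by spheres of $\Sigma_2$ lying inside it must have all but one piece equal to $S^3$; hence each $M_i$ coincides (up to diffeomorphism) with exactly one piece of the common refinement, and likewise for each $N_j$. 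This yields the desired bijection between the $M_i$ and the $N_j$ directly, with no algebra needed beyond the definition of primality. Your handling of $S^2\times S^1$ and simply connected summands is then superfluous.
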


The purpose of the present note is to study to what degree these statements hold in higher dimensions.

First, note that  all units of $\M{\Cat}{n}$ are homotopy spheres, as we deduce from an elementary complexity argument in Proposition~\ref{cor:units} below. It now follows from various incarnations of the Poincar\'e conjecture that $\M{\Top}{n}$ never has non-trivial units, and neither does $\M{\PL}{n}$, except potentially if $n = 4$. In the smooth category the current status is the following: The only odd dimensions in which $\M{\Diff}{n}$ has no non-trivial units are $1,3,5$ and $61$, see \cite{HHR,WX}. In even dimensions greater than $5$ Milnor and Kervaire construct an isomorphism $\Theta_n \cong \mathbb S_n/J_n$ from the group of smooth homotopy $n$-spheres $\Theta_n$ to the cokernel of the stable $J$-homomorphism $\pi_n(\mathrm{SO}) \rightarrow \mathbb S_n$, where $\mathbb S$ is the sphere spectrum. In dimensions below $140$, the only even dimension where $J_n$ is surjective are $2,4,6,12$ and $56$, see \cite{BHHM}. Wang and Xu recently conjectured that $1,2,3,$ possibly $4$, and $5,6,12,56, 61$ are the only dimensions without exotic spheres \cite{WX}.

Let us also mention that in dimension $4$ neither is it known whether every homotopy sphere is a unit nor whether $S^4$ is the only unit in $\M{\Diff}{4}$ (and of course these questions combine into the smooth $4$-dimensional Poincar\'e conjecture).

Before we continue we introduce the following definitions.

\begin{definition}
Let $\MM$ be an abelian monoid with neutral element $e$. 
\bnm
\item 
Two elements $m,n \in \MM$ are called \emph{associated} if there is a unit $u \in \MM^*$, such that $m = u\cdot n$.
\item  An element $m$ is called \emph{irreducible}, if it is not a unit and if all its divisors are associated to either $e$ or $m$.
\item An element $a$ is \emph{cancellable}, if $ab = ac$ implies $b=c$ for all elements $b,c \in \MM$.
\enm
\end{definition}

We make four remarks regarding these definitions.
\bnm
\item We warn the reader that for the monoids $\M{\Cat}{3}$ our usage of ``irreducible'' does \emph{not} conform with standard use in $3$-dimensional topology. More precisely, in our language $S^1\times S^2 \in \M{\Cat}{3}$ is irreducible, whereas in the usual language used in 3-dimensional topology, see \cite[p.~28]{He76}, the manifold $S^1\times S^2$ is not irreducible. Fortunately \cite[Lemma~3.13]{He76} says that this is the only 3-dimensional manifold for which the two definitions of irreducibility diverge (in fact by the prime decomposition theorem our notion of irreducible 3-dimensional manifold coincides with the usual use of the term prime 3-manifold).
\item Let $\MM$ be an abelian monoid. If $\MM$ is a unique factorisation monoid, then every element in $\ol{\MM}$ is cancellable.
\item
Another warning worthy of utterance is that in general, given a monoid $\MM$, neither all irreducible elements are prime, nor does a prime  element need to be irreducible, unless it is also cancellable.
\item Finally, note, that if $\MM$ is a unique factorisation monoid, this does not necessarily imply that every element in $\MM$ is cancellable: A good example is given by non-zero integers under multiplication modulo the relation that $x \sim -x$ if $|x| \geq 2$. In this case $\ol{\MM}\cong \N_{\geq 1}$ under multiplication, so $\MM$ is a unique factorisation monoid by prime decomposition. On the other hand we have $2\cdot (-1)=2\cdot 1$ and $-1\ne 1$. Thus we see that $2$ is not cancellable.
\enm

Using a fairly simple complexity argument we obtain the following result (Corollary~\ref{cor:infinite-divisor-chain} below).

\begin{proposition}\label{prop:decomposition-irreducibles-intro}
Every element in $\M{\Cat}{n}$ admits a connected sum decomposition into a homotopy sphere and irreducible manifolds.
\end{proposition}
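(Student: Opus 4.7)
The plan is to introduce a complexity function $c\colon\M{\Cat}{n}\to\N$ that is additive under connected sum and vanishes precisely on homotopy spheres, and then to argue by strong induction on $c(M)$. Once such a $c$ is in hand, the proposition follows quickly: if $c(M)=0$ then $M$ is a homotopy sphere and we take $\Sigma=M$ with no irreducible factors; if $M$ is irreducible we take $\Sigma=S^n$ together with the single factor $M$; otherwise $M$ is neither a unit nor irreducible, so by definition $M=A\#B$ for some divisor $A$ that is not associated to $e$ or to $M$. Since the units of $\M{\Cat}{n}$ form a group, this last condition forces both $A$ and $B$ to be non-units, so additivity gives $c(A),c(B)\geq 1$ and hence $c(A),c(B)<c(M)$. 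Induction supplies decompositions $A=\Sigma_A\#A_1\#\cdots\#A_j$ and $B=\Sigma_B\#B_1\#\cdots\#B_k$, and taking $\Sigma=\Sigma_A\#\Sigma_B$ (again a homotopy sphere) delivers the desired decomposition of $M$.

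For the complexity itself I would set
\[
c(M)\,=\,d(\pi_1(M))\,+\,\sum_{i=1}^{n-1} s\bigl(H_i(M;\Z)\bigr),
\]
where $d(G)$ denotes the minimum number of generators of a finitely generated group and $s(A)$ denotes the number of cyclic summands in the primary decomposition $A\cong\Z^r\oplus\bigoplus_{j}\Z/p_j^{k_j}$ of a finitely generated abelian group. Both quantities are finite because closed manifolds have finitely generated fundamental group and finitely generated integral homology. Additivity under $\#$ follows from Grushko's theorem $d(G*H)=d(G)+d(H)$ for the $\pi_1$-term, from the standard identification $H_i(A\#B;\Z)\cong H_i(A;\Z)\oplus H_i(B;\Z)$ for $0<i<n$, and from the fact that primary decomposition distributes over direct sums. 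Finally, $c(M)=0$ forces $\pi_1(M)=1$ and $H_i(M;\Z)=0$ for $0<i<n$, so by Hurewicz and Whitehead $M$ is a homotopy sphere, and hence a unit of $\M{\Cat}{n}$ by Proposition~\ref{cor:units}.

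The only genuinely delicate point is the design of the torsion contribution. Merely rational Betti numbers would wrongly assign complexity zero to a simply connected manifold whose only middle-dimensional homology is torsion, while the most naive integer-valued alternative, the minimum number of generators of $H_i(M;\Z)$, is not additive under $\oplus$, as $\Z/2\oplus\Z/3\cong\Z/6$ shows. The primary-summand count $s$ simultaneously detects every non-trivial torsion group and adds exactly under direct sums, and this is precisely what makes the complexity argument go through.
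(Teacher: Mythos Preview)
Your approach is essentially the paper's: define an additive complexity that vanishes exactly on homotopy spheres and use well-foundedness to obtain the decomposition. The paper takes $c(M)=d(\pi_1(M))+\operatorname{rank}\bigl(\bigoplus_{i=1}^{n-1}H_i(M;\Z)\bigr)+\ln\bigl(\#\operatorname{Tors}\bigoplus_{i=1}^{n-1}H_i(M;\Z)\bigr)$ and phrases the conclusion as ``no infinite strict divisor chains'' (Corollary~\ref{cor:infinite-divisor-chain}); your strong induction on $c$ is the same mechanism. Your replacement of the torsion term by the primary-summand count $s$ is a perfectly good alternative and has the pleasant feature of being integer-valued, which makes the descent cleaner than the paper's $\R_{\geq 0}$-valued function.

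There is, however, one genuine slip. You assert that $c(M)=0$ makes $M$ a homotopy sphere ``and hence a unit of $\M{\Cat}{n}$ by Proposition~\ref{cor:units}'', and you use this in the inductive step to get $c(A),c(B)\geq 1$ from $A,B$ being non-units. But Proposition~\ref{cor:units} only gives the implication ``homotopy sphere $\Rightarrow$ unit'' when $n\neq 4$ or $\Cat=\Top$; for $n=4$ and $\Cat=\Diff$ or $\PL$ this is the smooth $4$-dimensional Poincar\'e conjecture. In that exceptional case your factorisation $M=A\#B$ could have $A$ a non-unit homotopy $4$-sphere with $c(A)=0$ and $c(B)=c(M)$, so the induction need not terminate. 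The paper runs into the same wall: Corollary~\ref{cor:infinite-divisor-chain} is stated only \emph{unless $n=4$ and $\Cat=\Diff$ or $\PL$}, so its claimed implication of Proposition~\ref{prop:decomposition-irreducibles-intro} in that remaining case is likewise not carried out. Outside this case your argument is correct and coincides with the paper's.
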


\noindent
Unless $n = 4$ and $\Cat = \Diff$ or $\PL$, the homotopy sphere can of course, by the resolution of the Poincar\'e Conjecture, be absorbed into one of the irreducible factors.

As an example of the failure of cancellation and unique factorisation consider the manifolds $\cp^2\# \overline{\cp^2}$ and $S^2 \times S^2$. The intersection forms show that these manifolds are not homotopy equivalent, but it is well-known that $(S^2 \times S^2) \# \cp^2$ and $\cp^2 \# \overline{\cp^2} \# \cp^2$ are diffeomorphic \cite{GS99}. This implies easily that $\cp^2$ is not cancellable in $\Mh{\Cat}{4}$ and thus that $\M{\Cat}{4}$ is not a unique factorisation monoid.

The following is the main result of Section \ref{section:proof-non-simply-connected-case}:

\begin{theorem}\label{cor:not-cancellable}
For $n\geq 4$ the manifold $S^2\times S^{n-2}$ is not cancellable in any of the monoids $\Mh{\Cat}{n}$ and thus none of the monoids $\M{\Cat}{n}$ is a unique factorisation monoid in that range.
\end{theorem}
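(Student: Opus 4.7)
The goal is to exhibit, for each $n\geq 4$ and each $\Cat\in\{\Top,\PL,\Diff\}$, a pair of closed oriented $n$-manifolds $M,N$ with $[M]\neq[N]$ in $\Mh{\Cat}{n}$ yet $M\#(S^2\times S^{n-2})\cong N\#(S^2\times S^{n-2})$. A single such pair immediately shows $S^2\times S^{n-2}$ is not cancellable in $\Mh{\Cat}{n}$; combined with the remark after the definition of unique factorisation monoid (every element of such a monoid must be cancellable in $\ol{\MM}$), it follows that $\M{\Cat}{n}$ cannot be a unique factorisation monoid.

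The section title \emph{proof-non-simply-connected-case} makes clear that the construction should use manifolds with non-trivial fundamental group. The conceptual backdrop is Kreck's modified surgery theorem: two $n$-manifolds sharing the same normal $2$-type become $\Cat$-isomorphic after finitely many $\#(S^2\times S^{n-2})$-stabilisations. Non-cancellability of $S^2\times S^{n-2}$ then amounts to producing a pair for which exactly one stabilisation already suffices, but the unstabilised manifolds are distinguished by a secondary, $\pi_1$-sensitive invariant---Reidemeister or Whitehead torsion, a $\rho$-invariant, or, in dimension four, a Seiberg--Witten invariant. Such invariants survive in $\Mh{\Cat}{n}$ but are flattened when the extra $S^2\times S^{n-2}$ summand widens the normal $2$-type.

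Concretely, I would proceed as follows. (i) Fix a small non-simply-connected building block $L^n$---for instance $S^1\times S^{n-1}$, a lens space in odd dimensions, or a mapping torus of a self-diffeomorphism of $S^{n-1}$. (ii) Construct $M,N$ from $L$ either by a framed surgery or handle-swap that the new $S^2$-summand of the stabilisation reverses, or by plumbing bundles whose clutching data differ by an element of $\pi_1$ of the structure group---an element which can be absorbed into the new $2$-handle after stabilisation. (iii) Certify $M\not\cong N$ by computing the chosen secondary invariant. A baseline sanity check in dimension $4$ for $\Cat=\Diff$ can be extracted from Wall's classical stable diffeomorphism theorem applied to any homeomorphic but non-diffeomorphic pair of simply-connected non-spin $4$-manifolds---say a Dolgachev surface and $\cp^2\#9\,\overline{\cp^2}$---bypassing the non-simply-connected route entirely in that special case.

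The principal obstacle is the calibration in step~(iii): the secondary invariant must distinguish $M$ from $N$ but be killed by exactly \emph{one} stabilisation, whereas Kreck's theorem only abstractly guarantees eventual triviality after some number of stabilisations. Arranging this simultaneously in every category and dimension $n\geq 4$---in particular for $\Cat=\Top$ where gauge-theoretic tools are unavailable---is where the technical work of Section~\ref{section:proof-non-simply-connected-case} will be concentrated.
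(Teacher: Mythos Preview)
Your proposal is a research outline rather than a proof, and the route you sketch diverges substantially from what the paper actually does. The paper does not invoke Kreck's stable diffeomorphism theorem, Reidemeister torsion, $\rho$-invariants, or any other secondary invariant to distinguish $M$ from $N$. Instead it works at the level of finite $2$-complexes: Metzler's presentations $\PP_q$ of $(\Z/p)^s$ give Cayley complexes $X_{\PP_q}$, $X_{\PP_{q'}}$ that are \emph{not homotopy equivalent} (detected by the bias invariant) but satisfy $X_{\PP_q}\vee S^2 \simeq_s X_{\PP_{q'}}\vee S^2$ (Theorem~\ref{thm:finite-complexes-are-a-fork}). Wall's thickening functor $M^k(-)$ then converts this wedge-stabilisation statement directly into a connected-sum-stabilisation statement for manifolds (Corollary~\ref{thm:need-for-nonsimplyconnectedcase}), with $M^k(S^2)=S^2\times S^{k-2}$. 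The calibration problem you flag---arranging that \emph{one} stabilisation suffices---simply does not arise in this framework for $n\geq 5$: it is automatic from $X\vee S^2\simeq_s Y\vee S^2$. For $n=4$ the paper settles for $r\geq 1$ stabilisations (Theorem~\ref{thm:no-unique-prime-decomposition-dimension-4}), which is already enough for non-cancellability by Lemma~\ref{lem:not-cancellable}.

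Two concrete issues with your outline deserve mention. First, your dimension-$4$ ``baseline'' via a Dolgachev surface versus $\cp^2\#9\,\overline{\cp^2}$ fails for $\Cat=\Top$: these manifolds are homeomorphic, hence equal in $\Mh{\Top}{4}$, so the pair exhibits nothing there. The paper's examples, by contrast, are not even homotopy equivalent, so they work uniformly in all three categories. Second, your intended use of secondary homotopy-equivalence invariants is working much harder than necessary: since the paper's $M$ and $N$ are already homotopically distinct, no torsion or $\rho$-type invariant is needed. The moral is that the non-cancellation phenomenon here is fundamentally a fact about $2$-dimensional homotopy theory (non-cancellation of $S^2$ under wedge), transported into manifold topology by thickening---not a fact about subtle smooth or surgery-theoretic invariants.
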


The proof we provide crucially involves manifolds with non-trivial fundamental groups. This leaves open the possibility that the submonoid $\Mh{\Cat,\sc}{n}$ consisting of simply connected $\Cat$-manifolds is better behaved. However, we show, for most dimensions, in Section \ref{section:proofs-simply-connected-case} that this is still not the case:

\begin{theorem}\label{thm:simply-connected-case}
For $n \ge 17$, the manifold $S^{5}\times S^{n-5}$ is not cancellable in any of the monoids $\Mh{\Cat,\sc}{n}$ and thus $\M{\Cat,\sc}{n}$ is not a unique factorisation monoid in that range.
\end{theorem}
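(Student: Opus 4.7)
The plan is to exhibit, for each $n\geq 17$, a pair of closed simply connected smooth $n$-manifolds $M_0, M_1$ together with an orientation-preserving diffeomorphism
\[
M_0 \, \# \, (S^5 \times S^{n-5}) \;\cong\; M_1 \, \# \, (S^5 \times S^{n-5}),
\]
while $M_0$ and $M_1$ represent distinct classes in $\Mh{\Cat,\sc}{n}$ (that is, they are not related by connected sum with a homotopy $n$-sphere). Such a pair shows that $S^5\times S^{n-5}$ is not cancellable in $\Mh{\Cat,\sc}{n}$, and hence that $\M{\Cat,\sc}{n}$ is not a unique factorisation monoid by the remark that cancellability is a consequence of unique factorisation.

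The natural source of candidates is the class of total spaces of linear $S^5$-bundles over $S^{n-5}$. Writing $\xi_i\to S^{n-5}$ for oriented rank-$6$ real vector bundles with unit sphere bundle $M_i := S(\xi_i)$, such bundles are classified by $\pi_{n-6}(\op{SO}(6))$, and the hypothesis $n\geq 17$ places $n-6\geq 11$ well outside the stable range of $\op{SO}(6)$. Consequently the stabilisation map $\pi_{n-6}(\op{SO}(6))\to \pi_{n-6}(\op{SO})$ has a nontrivial kernel, coming via the fibre sequences $\op{SO}(k)\to \op{SO}(k+1)\to S^k$ from the unstable homotopy groups of spheres in the relevant range. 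Selecting $\xi_0,\xi_1$ whose classifying maps differ by a nonzero element of this kernel yields non-isomorphic bundles with isomorphic stabilisations. On the level of total spaces, I claim that this stabilisation relation is realised by connected sum with $S^5\times S^{n-5}$: performing surgery on $M_i$ along an unknotted framed $S^5\hookrightarrow M_i$ coming from the added trivial summand executes the cancelling $(5,6)$-handle move which transforms $M_i$ into $M_i\,\#\,(S^5\times S^{n-5})$, and the result depends only on the stable isomorphism class of $\xi_i$, giving the desired diffeomorphism.

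The remaining task is to verify that $M_0$ and $M_1$ do not become diffeomorphic after connected sum with any homotopy sphere. This is detected by a characteristic number of $M_i$---for instance a Pontryagin number pulled back from $\xi_i$ via the Gysin sequence---which is additive under connected sum and vanishes on homotopy spheres; a direct characteristic-class computation arranges for the difference to be nonzero. The main obstacle is the geometric identification between stabilisation of the classifying map and connected sum with $S^5\times S^{n-5}$ on the total space: this requires a careful handle-theoretic comparison (with some attention to dimension bookkeeping, since naively stabilising $\xi_i$ by a trivial line summand would raise the total-space dimension by one). Once this identification is in place, the rest of the proof reduces to computations in $\pi_\ast(\op{SO}(k))$ and in characteristic classes, which can be carried out uniformly for $n\geq 17$.
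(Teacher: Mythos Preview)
Your approach is genuinely different from the paper's, and as written it has two real gaps that I do not see how to close.

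\textbf{Gap 1: the stabilisation/connected-sum identification.} You acknowledge yourself that stabilising $\xi_i$ by a trivial line bundle raises the dimension of the total space, so the naive statement ``$\xi_0\oplus\epsilon\cong\xi_1\oplus\epsilon$ implies $M_0\#(S^5\times S^{n-5})\cong M_1\#(S^5\times S^{n-5})$'' has no obvious meaning. The handle-theoretic fix you gesture at is not a standard manoeuvre: taking connected sum with $S^5\times S^{n-5}$ adds a cancelling $5$-handle/$(n-5)$-handle pair attached along an \emph{unknotted} $S^4$ in a ball, which has no evident interaction with the bundle structure of $M_i$ or with the clutching map in $\pi_{n-6}(\op{SO}(6))$. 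I do not know of a theorem that produces a diffeomorphism $M_0\#(S^5\times S^{n-5})\cong M_1\#(S^5\times S^{n-5})$ from the hypothesis that $\xi_0$ and $\xi_1$ become isomorphic after adding a trivial summand, and I would not expect one in this generality.

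\textbf{Gap 2: the distinguishing invariant.} Even granting the first step, your proposed invariant cannot work. You want $\xi_0,\xi_1$ to differ by an element of $\ker\big(\pi_{n-6}(\op{SO}(6))\to\pi_{n-6}(\op{SO})\big)$. But Pontryagin classes are \emph{stable} characteristic classes: two bundles which become isomorphic after stabilisation have identical Pontryagin classes, hence the total spaces have identical Pontryagin numbers. So precisely the hypothesis you impose to get the connected-sum diffeomorphism kills the invariant you propose to distinguish $M_0$ from $M_1$. (The Euler class is unstable, but it lives in $H^6(S^{n-5})=0$ for $n\geq 17$.) You would need a genuinely unstable invariant that survives to the total space and is unchanged by connected sum with homotopy spheres; you have not supplied one.

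\textbf{What the paper does instead.} The paper avoids sphere bundles entirely. It takes two elements $\alpha,\beta$ generating the same cyclic subgroup of the torsion of $\pi_7(S^4)\cong\Z\oplus\Z/12$ with $\beta\neq\pm\alpha$, and forms the two-cell complexes $C_\alpha=S^4\cup_\alpha e^8$ and $C_\beta$. A theorem of Hilton shows $C_\alpha\not\simeq C_\beta$ while $C_\alpha\vee S^8\simeq C_\beta\vee S^8$; since everything is simply connected this is a simple homotopy equivalence. One then applies Wall's thickening construction: the boundaries $M^k(C_\alpha)$, $M^k(C_\beta)$ of high-dimensional thickenings are simply connected $k$-manifolds that are not homotopy equivalent, but which become orientation-preservingly diffeomorphic after a single connected sum with a product of spheres. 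The distinguishing step and the stabilisation step are thus both purely homotopy-theoretic facts about two-cell complexes, transported to manifolds by the thickening machinery; no characteristic-class computation is needed.
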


The bound $n \geq 17$ is by no means intrinsic for finding non-cancellative elements in $\Mh{\Cat,\sc}{n}$; we already gave the example of $\cp^2 \in \Mh{\Cat}{4}$, and indeed by Wall's classification \cite{Wa62} the element $S^{2n} \times S^{2n}$ is non-cancellable in $\Mh{\Cat,\high}{4n}$, the monoid of highly connected $4n$-manifolds, once $n > 1$. 

Interestingly, in some cases the monoids $\M{\Diff, \high}{n}$ are actually unique factorisation monoids.
%does admit prime decompositions.
More precisely, by \cite{Wa62} and \cite[Corollary~1.3]{Sm62} we have the following theorem.

\begin{theorem}[Smale, Wall]\label{thm:high-dimensional-prime-decomposition}
%In dimensions $n=6,14$ the monoid $\M{\Diff,\high}{n}$ is a unique factorisation monoid.
%There are monoid isomorphisms 
For $k \equiv 3,5,7 \ \mathrm{mod}\ 8$, and $k \neq 15, 31, 63$, half the rank of $H_k$ gives an isomorphism $\Mh{\Diff,\high}{2k} \cong \N$. In particular, $\M{\Diff,\high}{2k}$ is a unique factorisation monoid in these cases.
\end{theorem}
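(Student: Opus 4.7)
The plan is to invoke the classification of $(k-1)$-connected smooth closed oriented $2k$-manifolds due to Smale \cite{Sm62} (handling $k=3$) and Wall \cite{Wa62} (handling $k\geq 5$, covering all remaining $k$ in the stated congruence classes). This classification asserts that, up to connected sum with a homotopy sphere, such an $M$ is determined by the pair $(\lambda_M,\alpha_M)$ consisting of the intersection form $\lambda_M$ on the free abelian group $H_k(M;\Z)$ together with a ``quadratic refinement'' $\alpha_M\colon H_k(M;\Z)\to\pi_{k-1}(SO(k))$ recording the clutching functions of the normal bundles of embedded representing $k$-spheres. Since by Proposition~\ref{cor:units} all units of $\M{\Diff,\high}{2k}$ are homotopy spheres, passing to $\Mh{\Diff,\high}{2k}$ identifies two manifolds exactly when their invariants $(\lambda,\alpha)$ coincide.

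Since $k\equiv 3,5,7\pmod 8$ is in particular odd, the form $\lambda_M$ is skew-symmetric, and by Poincar\'e duality it is unimodular on the finitely generated free abelian group $H_k(M;\Z)$. Every unimodular skew-symmetric form over $\Z$ is hyperbolic, so $\lambda_M$ is isomorphic to the orthogonal sum of $n=\tfrac12\operatorname{rank} H_k(M;\Z)$ copies of the standard rank-$2$ hyperbolic form; in particular, $\lambda_M$ is completely determined by $n$.

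The main and more delicate step is to verify that in the stated range the invariant $\alpha_M$ is also forced to be the one realised by $\#^n(S^k\times S^k)$ (equivalently, trivial in a suitable sense). This is carried out using the long exact sequence of the fibration $SO(k)\to SO(k+1)\to S^k$, combined with Bott periodicity and the compatibility condition relating $\alpha_M$ to $\lambda_M$ in Wall's classification. The exclusions $k=15,31,63$ are exactly the Kervaire-invariant dimensions (since $2k=30,62,126$); in these cases a non-trivial Arf-type invariant on the quadratic refinement obstructs the decomposition into a sum of sphere products, whereas in the remaining range no such obstruction arises. This tangential/Arf analysis is the step I expect to require the most technical care.

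Given these two inputs, every highly connected $2k$-manifold in the stated range is diffeomorphic to $\#^n(S^k\times S^k)\,\#\,\Sigma$ for some homotopy sphere $\Sigma$ and with $n=\tfrac12\operatorname{rank} H_k(M;\Z)$. The assignment $[M]\mapsto n$ then descends to a well-defined monoid homomorphism $\Mh{\Diff,\high}{2k}\to\N$: it is surjective by taking iterated connected sums of $S^k\times S^k$, and injective by the classification; additivity follows because rank adds under connected sum. Since $\N$ is the free commutative monoid on one generator, it is trivially a unique factorisation monoid, yielding the final assertion.
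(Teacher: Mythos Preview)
Your outline follows the same route as the paper, which simply invokes \cite[Lemma~5]{Wa62} and \cite{Sm62} (with the surrounding discussion in Section~\ref{section:wall-hc}). There is, however, one genuine imprecision that leaves the cases $k\in\{3,7\}$ uncovered. You assert that ``the exclusions $k=15,31,63$ are exactly the Kervaire-invariant dimensions''; in fact the odd $k$ for which a smooth Kervaire-invariant-one $2k$-manifold exists are $k\in\{1,3,7,15,31\}$ and possibly $63$, so $k=3$ and $k=7$ also lie in the congruence classes of the theorem and are Kervaire dimensions, yet they are \emph{not} excluded. Your Arf-obstruction argument, as written, would wrongly rule them out too.

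The fix is a two-case split, exactly as in the paper's discussion around Theorem~\ref{thm:wall-highly-connected}. For $k=3,7$ one has $\pi_{k-1}(SO(k))=0$ (equivalently, $S^3$ and $S^7$ are parallelisable, so every embedded $k$-sphere has trivial normal bundle), hence $\alpha_M\equiv 0$ automatically and the classification collapses to the hyperbolic form alone. For the remaining $k\equiv 3,5,7\pmod 8$ with $k\notin\{3,7\}$, the target of $\alpha_M$ is nonzero and the Arf invariant of the induced quadratic refinement is the relevant obstruction; here the Hill--Hopkins--Ravenel theorem enters to force Arf $=0$ precisely when $k\neq 15,31,63$. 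Once this split is made explicit your argument is complete and agrees with the paper's.
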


As a final remark consider for $n \neq 4$ the exact sequence 
\[ 0 \to \Theta_n \to \M{\Diff}{n} \to \Mh{\Diff}{n} \to 0 \]
of abelian monoids. In general this sequence does not admit a retraction $\M{\Diff}{n}\to \Theta_n$: It is well-known that there are smooth manifolds $M$ for which there exists a non-trivial homotopy sphere $\Sigma$ such that $M \# \Sigma \cong M$, i.e.\ where the inertia group of $M$ is non-trivial, \cite[Theorem 1]{Wilkens}, \cite[Theorem 1.1]{Ram16}. For instance one can consider $M= \mathbb{HP}^2$. The group of homotopy $8$-spheres is isomorphic to $\Z/2$ and equals the inertia group of $\mathbb{HP}^2$.
Applying a potential retraction of the above sequence to the equation $[M \# \Sigma] = [M]$ gives a contradiction as $\Theta_n$ is a group. Potentially, the above sequence might admit a splitting, but we will not investigate this further.

\begin{remark}
The 
topic of this paper
is related to the notion of knot factorization.
More precisely, Schubert \cite{Sch49} showed in 1949 that the monoid of oriented knots in $S^3$, where the operation is given by connected sum, is a unique factorisation monoid.
It was shown by Kearton \cite{Ke79,Ke80} and Bayer \cite{Bay80} that the higher-dimensional analogue does \emph{not} hold.
\end{remark}

\begin{remark}
The question of whether a given $n$-manifold $M$ is reducible is in general a hard problem and the answer
can depend on the category.  When $n \geq 3$ and $M$ is a $j$-fold connected sum $M  = M_1 \# \dots \# M_j$, then 
$\pi_1(M) \cong \pi_1(M_1) \ast \dots \ast \pi_1(M_j)$ is the free product of the fundamental groups of the summands.  The converse of this
statement goes by the name of the Kneser Conjecture.   
When $n = 3$, the Kneser Conjecture was proved by Stallings in his PhD thesis, see also \cite[Theorem~7.1]{He76}.
  In higher dimensions, results of Cappell
showed that the Kneser Conjecture fails \cite{Ca74, Ca76} and when $n = 4$, 
Kreck, L\"uck and Teichner
\cite{KLT95} showed that the Kneser Conjecture fails in both $\M{\Diff}{4}$ and $\M{\Top}{4}$ 
and even gave an example of an irreducible smooth $4$-manifold which is topologically reducible.
\end{remark}

\subsection*{Organisation}
The paper is organized as follows. 
In Section~\ref{section:connected-sum} we study the behaviour of complexity functions under the connected sum operation and use the results to provide the proof of Proposition~\ref{prop:decomposition-irreducibles-intro} and the characterisation of the units.
In Section~\ref{section:wall-hc} we extract some results from Wall's classification of highly-connected manifolds. 
In Section~\ref{section:thickenings} we recall Wall's thickening operation which makes it possible to associate manifolds to CW-complexes. 
In Section~\ref{section:proof-non-simply-connected-case} we show the existence of interesting pairs of 2-dimensional CW-complexes which allow us to prove Theorem~\ref{thm:no-unique-prime-decomposition-high-dimension}
and~\ref{thm:no-unique-prime-decomposition-dimension-4}.
Furthermore in Section \ref{section:proofs-simply-connected-case} we recall the construction of interesting pairs of 8-dimensional simply connected CW-complexes which leads to the proof of Theorem~\ref{thm:simply-connected-case}.
In Section~\ref{section:prime-manifolds} we discuss the existence of prime manifolds in various monoids, in particular in we show that the Wu manifold $W=\op{SU}(3)/\op{SO}(3)$ is prime in $\M{\Cat,\sc}{5}$.
Finally in Section~\ref{section:questions} we list some open problems.

\subsection*{Acknowledgments.}
Most of the work on this paper happened while the authors attended the workshop ``Topology of Manifolds: Interactions Between High And Low Dimensions'' that took place January 7th-18th at the mathematical research institute MATRIX in Creswick, Australia.  We are very grateful to MATRIX for providing an excellent research environment.

SF and ML were supported by the SFB 1085 ``Higher Invariants'' at the University of Regensburg  funded by the DFG, FH and DK were funded by the Deutsche Forschungsgemeinschaft (DFG, German Research Foundation)
under Germany's Excellence Strategy - GZ 2047/1, Projekt-ID 390685813.
JN was supported by the UK Engineering and Physical Sciences Research Council (EPSRC) grant EP/N509577/1.

We wish to thank Mark Powell, Manuel Krannich, and Patrick Orson for helpful conversations.
We also  wish to thank the referee for several useful comments. In particular the proof of
Proposition~\ref{prop:borel-conjecture-implies-unique-dec} was suggested to us by the referee.

%==============================================
\section{The connected sum operation} \label{section:connected-sum}

We recall the definition of the connected sum. Let $n\in \N$ and let $M,N \in \M{\Cat}{n}$ be two $\Cat$-manifolds. Given an orientation-preserving $\Cat$-embedding $\varphi\colon \overline{B}^n\to M$ and given an orientation-reversing $\Cat$-embedding  $\psi\colon \overline{B}^n\to N$ we define the \emph{connected sum} of $M$ and $N$ as 
\[
\hspace{1.2cm} M\# N\,\,:=\,\, \big(M\setminus \varphi\big(B^n\big)\big)\,\sqcup\, 
\big(N\setminus \psi\big(B^n\big)\big)\,/\, \varphi(P)=\psi(P)\mbox{ for all } P\in S^{n-1},\]
given a smooth structure (for $\Cat = \Diff$) by rounding the corners, and a piecewise linear one (for $\Cat = \PL$) by choosing appropriate triangulations that make the images of $\varphi$ and $\psi$ sub-complexes.

For $\Cat = \Diff$ or $\PL$ the fact that the isomorphism type of the connected sum of two manifolds does not depend on the choice of embedding is a standard fact in (differential) topology, see e.g.\ \cite[Theorem~2.7.4]{Wa16} and \cite[Disc Theorem~3.34]{RS72}. The analogous statement also holds for $\Cat=\Top$,
but the proof (for  $n > 3$) is significantly harder. It follows in a relatively straightforward way from the ``annulus theorem'' that was proved in 1969 by Kirby~\cite{Ki69} for $n\ne 4$ and in 1982 by Quinn~\cite{Qu82,Ed84} for $n=4$. We refer to \cite{FNOP19} for more details.

It is well-known that many invariants are well-behaved under the connected sum operation. 
Before we state the corresponding lemma that summarizes the relevant results we introduce our notation for intersection forms and linking forms. 
Given a $2k$-dimensional manifold $W$ we denote by $Q_W\colon H_k(W;\Z)\times H_k(W;\Z)\to \Z$ the intersection form of $W$. Furthermore, given a $(2k+1)$-dimensional manifold $W$ we denote by $\lk_W\colon \op{Tors} H_k(W;\Z)\times \op{Tors} H_k(W;\Z)\to \Q/\Z$ the linking form of $W$.

\begin{lemma}\label{lem:invariants-additive}
If $M_1,\dots,M_k$ are $n$-dimensional manifolds,
then the following statements hold:
\bnm[font=\normalfont]
\item \label{it:fundamentalgroup} If $n\geq 3$, then $\pi_1(M_1\# \dots \# M_k)\cong \pi_1(M_1)*\dots*\pi_1(M_k)$,
\item \label{it:cupproduct}  Let $R$ be a ring. Then the cohomology ring $H^*(M_1 \# \dots \# M_k;R)$ is a quotient of a subring of the product $H^*(M_1;R) \times \dots \times H^*(M_k;R)$: First, consider the subring of this product where the elements in degree zero are in the image of the diagonal map $R \to R^k$ (recall that all $M_i$ are connected). Then divide out the ideal generated by $(\mu_i - \mu_j)$ for $1 \leq i,j \leq k$, where $\mu_i$ is the cohomological fundamental class of $M_i$,
\item if $n$ is even, then
$Q_{M_1\# \dots \# M_k}\cong Q_{M_1}\oplus \dots \oplus Q_{M_k}$,
\item if $n$ is odd, then
$\lk_{M_1\# \dots \# M_k}\cong \lk_{M_1}\oplus \dots \oplus \lk_{M_k}$.
\enm
\end{lemma}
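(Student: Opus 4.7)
The plan is to reduce to the case $k=2$ for all four items by induction on the number of summands, using associativity of connected sum; each of the invariants in the statement is itself built from an iterated binary operation (free product, product of rings, direct sum of forms). For all four items I would decompose $W := M_1 \# M_2$ as $U \cup V$ with $U$ an open thickening of the $M_1$-side homotopy equivalent to $M_1 \setminus \{\mathrm{pt}\}$, $V$ analogous for $M_2$, and $U \cap V$ an open collar on the belt sphere $S^{n-1}$; recall $H_\ast(M_i \setminus \{\mathrm{pt}\}) \cong H_\ast(M_i)$ in degrees less than $n$.

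For item~(i), Van Kampen's theorem applied to this decomposition yields $\pi_1(W) \cong \pi_1(M_1) \ast \pi_1(M_2)$ directly, since $S^{n-1}$ is simply connected for $n \geq 3$ and removing an open ball from $M_i$ does not affect $\pi_1$ (the complement deformation retracts onto $M_i^{(n-1)}$ in a CW model with a single top cell). For item~(ii) I would study the pinch map $c \colon W \to M_1 \vee M_2$ that collapses the belt sphere. First, compute $H^\ast(M_1 \vee M_2;R)$ directly: in positive degrees it is $\widetilde H^\ast(M_1;R) \oplus \widetilde H^\ast(M_2;R)$ with vanishing cross cup products, and in degree zero it is the diagonal $R \subset R \times R$, which is precisely the subring described in the statement. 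Then show $c^\ast$ is a surjective ring map with the prescribed kernel: comparing the Mayer--Vietoris sequence of $W = U \cup V$ with that of the wedge yields an isomorphism $c^\ast \colon H^i(M_1 \vee M_2) \to H^i(W)$ for $1 \le i \le n-1$, while in degree $n$ both classes $\mu_1, \mu_2$ pull back to the fundamental cohomology class of $W$ (since precomposing $c$ with the collapse of one wedge factor gives a degree-one map onto the other). Thus $\ker c^\ast = R \cdot (\mu_1 - \mu_2)$, and this coincides with the full ideal generated by $\mu_1 - \mu_2$ because multiplying by any positive-degree class pushes the product above $H^n$, hence to zero.

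Item~(iii) is a direct consequence of (ii): by Poincar\'e duality the intersection form on $H_k(W)$ is the cup-product pairing $H^k(W) \otimes H^k(W) \to H^{2k}(W) \cong R$ evaluated against $[W]$, and the ring description from (ii) renders this pairing block-diagonal. For item~(iv), a Mayer--Vietoris computation gives $H_k(W) \cong H_k(M_1) \oplus H_k(M_2)$ in the middle degree when $n = 2k+1 \geq 3$, so the torsion splits accordingly. To identify the linking form I would invoke its geometric definition: write $x = x_1 + x_2$ and $y = y_1 + y_2$ with $x_i, y_i$ represented by cycles in $M_i \setminus D^n$; choose bounding chains $n \cdot x_i = \partial c_i$ inside $M_i \setminus D^n$ (possible since $n \cdot x_i$ bounds in $M_i$ and by transversality the bounding chain can be pushed off a small ball); then $\lk_W(x,y) = \tfrac{1}{n}(c_1+c_2) \cdot (y_1+y_2)$, and the cross terms $c_i \cdot y_j$ with $i \ne j$ vanish because $c_i$ and $y_j$ lie in disjoint pieces of $W$.

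The main obstacle I anticipate is the degree-$n$ analysis in item~(ii): one has to verify carefully that $c^\ast$ is surjective in the top degree and that its kernel is exactly $R \cdot (\mu_1 - \mu_2)$, despite the fact that $U$ and $V$ differ from $M_1$ and $M_2$ in their top cohomology (the discrepancy being controlled via universal coefficients by $\mathrm{Ext}(H_{n-1}, R)$). Once (ii) is established, (iii) is essentially formal and (iv) amounts to a standard transversality argument.
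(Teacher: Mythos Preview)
Your argument is correct and for items (i)--(iii) it is essentially the paper's proof written out in more detail: the paper also reduces to $k=2$, invokes Seifert--van Kampen for (i), and handles (ii) via the collapse map $M_1\#M_2 \to M_1\vee M_2$, packaged as the cofibre sequence
\[
S^{n-1}\longrightarrow M_1\# M_2 \longrightarrow M_1\vee M_2 \longrightarrow S^n,
\]
which makes the top-degree analysis you flag as ``the main obstacle'' immediate: the kernel of $c^\ast$ in degree $n$ is the image of $H^n(S^n;R)\to H^n(M_1\vee M_2;R)$, and that image is visibly $R\cdot(\mu_1-\mu_2)$. So your Mayer--Vietoris verification is fine but the cofibre-sequence phrasing saves you the separate degree-one computation and the universal-coefficients worry.

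The one genuine divergence is item (iv). You argue geometrically, representing torsion classes by cycles in $M_i\setminus D^n$, bounding them there by general position (a $(k{+}1)$-chain generically avoids a point in a $(2k{+}1)$-manifold), and observing the cross intersection terms vanish by disjointness. The paper instead notes that the linking form is expressible via cup product and the Bockstein, and that the ring isomorphism from (ii) is automatically compatible with Bockstein because it is induced by a map of spaces; hence (iv) is a one-line corollary of (ii). Your geometric route is perfectly valid and self-contained, while the paper's route is shorter but requires knowing the cohomological description of the linking form. (Minor point: you use $n$ both for the dimension and for the torsion order in your bounding-chain equation; pick a different letter.)
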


\begin{proof}
By induction, it suffices to treat the case $k=2$. (1) is an elementary application of the Seifert--van Kampen theorem. (2) follows from the cofibre sequence
$$S^{n-1} \to M_1 \# M_2 \to M_1 \vee M_2 \to S^n:$$ 
Recall that the subring described in the statement is precisely the cohomology of $M_1 \vee M_2$. It is then easy to see that the map $M_1 \vee M_2 \to S^n$ induces an injection on $H^n(-;R)$ with image precisely $\mu_1 - \mu_2$.
Statements (3) and (4) follow from (2) and the fact that the described isomorphism of rings is compatible with the Bockstein operator.
\end{proof}

One can extract a crude complexity invariant from the above data, as follows. Given a finitely generated abelian group $A$ we denote by  $\op{rank}(A) =\dim_{\Q}(A\otimes \Q)$ its rank and we define
\[ t(A)\,\,:=\,\,\ln(\# \mbox{torsion subgroup  of $A$}).\]
Furthermore, given a finitely generated group we denote by $d(G)\in \N$ the minimal number of elements in a generating set for $G$.

Let $M$ be an $n$-dimensional topological manifold. 
Then $M$ is a retract of a finite CW-complex \cite[p.~538]{Bre93}, and thus has the homotopy of a CW complex (e.g. \cite[Proposition A.11]{Ha01}) and its fundamental group and the (co-) homology groups  of $M$ are finitely presented. Thus we can
define the \emph{complexity of $M$} as 
\[ c(M)\,\,:=\,\, d(\pi_1(M))\,+\, \op{rank}\Big(\,\ttmoplus{i=1}{n-1} H_i(M;\Z)\Big)\,+\, 
 t\Big(\,\ttmoplus{i=1}{n-1} H_i(M;\Z)\Big)\,\in\, \R_{\geq 0}.\]
The following proposition summarizes two key properties of $c(M)$.

\begin{proposition}\label{prop:prop-c}
\mbox{}
\bnm[font=\normalfont]
\item For $n \geq 3$ the complexity gives a homomorphism $\M{\Cat}{n} \rightarrow \R_{\geq 0}$.
\item The kernel of $c$ consists entirely of homotopy spheres.
\enm
\end{proposition}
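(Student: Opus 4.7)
The plan is to tackle the two parts independently, both by combining the structural results of Lemma~\ref{lem:invariants-additive} with some standard algebraic input.

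For part~(1), an obvious induction reduces the claim to showing $c(M \# N) = c(M) + c(N)$ for two summands, and I will argue that each of the three ingredients in the definition of $c$ is additive under connected sum. For the fundamental-group term, Lemma~\ref{lem:invariants-additive}(1) gives the free-product decomposition $\pi_1(M \# N) \cong \pi_1(M) \ast \pi_1(N)$ (this is where $n \geq 3$ enters), and Grushko's theorem supplies $d(G \ast H) = d(G) + d(H)$ for finitely generated groups. For the homology terms, I plan to establish the stronger fact that $H_i(M \# N;\Z) \cong H_i(M;\Z) \oplus H_i(N;\Z)$ for every $1 \leq i \leq n-1$, from which additivity of both the rank and the log-torsion follows at once. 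Exploiting the cofibre sequence $S^{n-1} \to M \# N \to M \vee N$ already used in the proof of Lemma~\ref{lem:invariants-additive}, the associated long exact sequence sandwiches $H_i(M \# N)$ between vanishing groups for $1 \leq i \leq n-2$, and identifies it with $H_i(M \vee N) \cong H_i(M) \oplus H_i(N)$ immediately in that range. The only delicate case is $i = n-1$, where the relevant segment reads
\[
H_n(M \# N) \to H_n(M \vee N) \to H_{n-1}(S^{n-1}) \to H_{n-1}(M \# N) \to H_{n-1}(M \vee N) \to 0;
\]
here I will verify by tracking orientations through the pinch map $M \# N \to M \vee N$ that $[M \# N]$ is sent to $([M],[N]) \in \Z \oplus \Z$, so that the subsequent connecting map $\Z \oplus \Z \to \Z$ is surjective and the desired splitting again follows.

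For part~(2), suppose $c(M) = 0$. Each of the three non-negative summands defining $c(M)$ must then vanish separately: $\pi_1(M)$ is trivial and $H_i(M;\Z) = 0$ for all $1 \leq i \leq n-1$. Together with $H_0(M;\Z) = H_n(M;\Z) = \Z$ (as $M$ is closed, connected and oriented), this shows that $M$ is simply connected with the integral homology of $S^n$. Via the Hurewicz isomorphism I then select a map $f \colon S^n \to M$ representing a generator of $\pi_n(M) \cong H_n(M;\Z) \cong \Z$; this $f$ is an integral homology isomorphism between simply connected spaces, so Whitehead's theorem upgrades it to a homotopy equivalence, and $M$ is a homotopy sphere.

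The only step that I expect to require real vigilance is the orientation bookkeeping in the pinch map needed to identify $H_{n-1}(M \# N)$ with $H_{n-1}(M) \oplus H_{n-1}(N)$; everything else is a direct application of Lemma~\ref{lem:invariants-additive} together with standard homotopy-theoretic staples (Grushko, Hurewicz, Whitehead).
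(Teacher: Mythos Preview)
Your proposal is correct and follows essentially the same route as the paper: Lemma~\ref{lem:invariants-additive} plus Grushko--Neumann for part~(1), and Hurewicz plus Whitehead for part~(2). The only difference is cosmetic: where the paper simply points to Lemma~\ref{lem:invariants-additive} for the additivity of the homological summands, you spell out the long exact sequence of the cofibre $S^{n-1} \to M\#N \to M \vee N$ directly in homology (including the orientation check in degree $n{-}1$), which amounts to reproving a piece of that lemma rather than quoting it. One small omission worth adding in part~(2): Whitehead's theorem needs $M$ to have the homotopy type of a CW-complex, which the paper supplies via the earlier observation that closed topological manifolds are retracts of finite CW-complexes.
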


For the proof, recall the Grushko-Neumann Theorem which is proved in most text books on combinatorial group theory, e.g.\ \cite[Corollary~IV.1.9]{LS77}.

\begin{theorem} \textbf{\emph{(Grushko-Neumann Theorem)}}\label{thm:grushko-2}
Given any two finitely generated groups $A$ and $B$ we have 
\[ d(A*B)\,\,=\,\, d(A)+d(B).\]
\end{theorem}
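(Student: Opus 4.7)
The upper bound $d(A*B) \leq d(A) + d(B)$ is immediate from the universal property of the free product: the union of generating sets for $A$ and $B$ generates $A*B$. The content of the theorem is the reverse inequality, for which I would use the topological proof due to Stallings. Set $X = K(A,1) \vee K(B,1)$ with wedge point $v$, so that $\pi_1(X,v) \cong A*B$ by Seifert--van Kampen. Let $n = d(A*B)$ and fix a generating tuple $(g_1, \ldots, g_n)$. Realise these generators by a cellular map $f \colon R \to X$ from a rose $R$ with $n$ petals, sending the base vertex to $v$ and petal $i$ to a loop representing $g_i$; then $f_*$ is surjective on $\pi_1$.

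The plan is to deform $f$---allowing moves that replace $R$ by a graph of the same first Betti number via foldings---so that $T := f^{-1}(v)$ becomes a finite set of points cutting $R$ into finitely many arcs, each mapping into one of the two summands $K(A,1)$ or $K(B,1)$, and to choose $f$ so as to minimise $|T|$ among all such maps inducing a surjection on $\pi_1$. In the minimal configuration, let $R_A$ and $R_B$ denote the unions of closed arcs mapping into the $A$- and $B$-summands respectively, so that $R_A \cup R_B = R$ and $R_A \cap R_B = T$. Each connected component of $R_A$ carries a subgroup of $A$ (the image of its $\pi_1$ after choosing a basepoint and a path back to $v$), and together these subgroups generate all of $A$ because $f_*$ is surjective and the projection $A*B \twoheadrightarrow A$ that kills $B$ is, too; similarly for $B$. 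An Euler-characteristic calculation bounds the sum of the first Betti numbers of the components of $R_A$ and $R_B$ by $\mathrm{rank}(R) = n$, from which $d(A) + d(B) \leq n$ follows.

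The main obstacle is establishing that minimality of $|T|$ really forces the desired count: one must show that if some arc in $R_A$ represents a null-homotopic loop in $K(A,1)$, then there is a homotopy of $f$ erasing a pair of points from $T$ without destroying surjectivity of $f_*$, and that sliding an arc endpoint across the wedge point $v$ realises a Nielsen transformation $g_i \mapsto g_i g_j^{\pm 1}$ of the generating tuple. Carefully tracking these moves---equivalently, analysing the minimal action of $A*B$ on its Bass--Serre tree---is the technical heart of the argument, and is exactly what is carried out in the proof cited as \cite[Corollary~IV.1.9]{LS77}.
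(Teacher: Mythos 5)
The paper does not actually prove this statement: it is quoted as a classical theorem with the proof deferred entirely to the literature (\cite[Corollary~IV.1.9]{LS77}), so there is no in-paper argument to compare yours against. Your sketch is a faithful outline of the standard Stallings-style topological proof of Grushko's theorem, and you correctly locate the genuine technical content (the binding-tie reduction that makes $f^{-1}(v)$ connected, without which the component subgroups of $R_A$ need not generate $A$ and the Euler-characteristic count does not close) in exactly the reference the paper itself cites, so your treatment is consistent with, and more informative than, the paper's.
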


\begin{proof}[Proof of Proposition \ref{prop:prop-c}]
The first statement follows from Lemma~\ref{lem:invariants-additive} and the Grushko-Neumann Theorem~\ref{thm:grushko-2}. The second statement follows since by the Hurewicz Theorem 
we have $\pi_n(M) \cong \Z$ if $c(M) = 0$, and a generator of $\pi_n(M)$ %this group 
is represented by a map $S^n \rightarrow M$ which is a homotopy equivalence by Whitehead's theorem (and the observation above, that $M$ has the homotopy type of a CW-complex).
\end{proof}

\begin{proposition}\label{cor:units}\mbox{}
\bnm[font=\normalfont]
\item 
All units of $\M{\Cat}{n}$ are homotopy spheres.
\item The converse to \textup{(1)} holds if $n\ne 4$ or if $n=4$ and $\Cat=\Top$.
\item The neutral element is the only unit in  $\M{\Top}{n}$ and $\M{\PL}{n}$.
\enm
\end{proposition}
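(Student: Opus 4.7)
The plan is to combine the complexity homomorphism of Proposition~\ref{prop:prop-c} with the appropriate categorical Poincar\'e conjectures; no new ingredients are needed.

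For part (1), the argument is essentially formal. The low-dimensional cases $n \leq 2$ are immediate since $\M{\Cat}{1} = \{[S^1]\}$ and $\M{\Cat}{2} \cong \N$ via the genus. For $n \geq 3$, suppose $[M] \in \M{\Cat}{n}$ is a unit and choose $N$ with $M \# N \cong S^n$. Applying Proposition~\ref{prop:prop-c}(1), which provides a monoid homomorphism $c \colon \M{\Cat}{n} \to \R_{\geq 0}$, we obtain $c(M) + c(N) = c(S^n) = 0$, so non-negativity forces $c(M) = 0$, and Proposition~\ref{prop:prop-c}(2) identifies $M$ as a homotopy sphere.

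For part (2), I treat each category separately. When $\Cat = \Top$, every topological homotopy $n$-sphere is homeomorphic to $S^n$ by the topological Poincar\'e conjecture (Perelman for $n=3$, Freedman for $n=4$, and Newman and Smale for $n \geq 5$), so such a manifold represents the neutral element and is automatically a unit. When $\Cat = \Diff$ or $\PL$ and $n \neq 4$, homotopy $n$-spheres form an abelian group under connected sum, namely the Kervaire--Milnor group $\Theta_n$ in the smooth case and its PL analogue in the piecewise-linear case, so every homotopy sphere admits a $\#$-inverse and is therefore a unit.

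For part (3), combine (1) with the relevant Poincar\'e conjecture. By (1), any unit in $\M{\Top}{n}$ or $\M{\PL}{n}$ is a homotopy sphere. The topological Poincar\'e conjecture identifies every topological homotopy $n$-sphere with $[S^n]$, and the PL Poincar\'e conjecture (Smale and Stallings for $n \geq 5$, classical for $n \leq 3$) does the same in the PL category whenever $n \neq 4$; in the PL category with $n = 4$ the claim coincides via Cerf with the smooth $4$-dimensional Poincar\'e conjecture, which is the single case left open, matching the caveat already recorded in the introduction. There is no real obstacle in the argument once Proposition~\ref{prop:prop-c} is in hand; the only delicate point is keeping track of which incarnation of the Poincar\'e conjecture is being invoked in each dimension and category.
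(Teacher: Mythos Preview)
Your proof is correct, and for parts (1) and (3) it matches the paper's argument essentially verbatim (the paper is terser, simply saying that units map to units under homomorphisms, and that (3) follows from the Poincar\'e conjecture; your explicit flagging of the open $n=4$ PL case is in fact more careful than the paper's one-line proof).

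For part (2), however, you take a genuinely different route. You invoke as a black box that smooth (resp.\ PL) homotopy spheres form a group under connected sum, citing Kervaire--Milnor for the smooth case. The paper instead proves the existence of inverses directly: given a homotopy sphere $\Sigma$ in dimension $n \geq 5$, it removes an open disc from $(\Sigma \setminus \mathrm{int}(D)) \times I$ to produce an explicit $h$-cobordism between $\Sigma \# \overline{\Sigma}$ and $S^n$, and then applies the $h$-cobordism theorem (with a separate remark for $n=5$, where one first notes that every homotopy $5$-sphere is $h$-cobordant to $S^5$). Your approach is shorter but imports more: the statement that $\Theta_n$ is a group \emph{is} the statement that inverses exist, and Kervaire--Milnor's proof of this is essentially the same $h$-cobordism construction the paper spells out. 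The paper's version has the advantage of being self-contained and making visible exactly where the $h$-cobordism theorem enters; yours has the advantage of brevity if one is content to cite. One small point: your citation of Kervaire--Milnor strictly covers only $n \geq 5$ in the smooth case, so you are implicitly also using Perelman for $n=3$ in $\Diff/\PL$---worth making explicit.
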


By the work of Smale, Newman, Milnor and Kervaire the groups of homotopy spheres are of course relatively well understood for $n \geq 5$.

\begin{proof}\mbox{}
\bnm
\item 
Since units are mapped to units under homomorphisms  it follows from
Proposition~\ref{prop:prop-c} (2) that units are homotopy spheres.
\item 
For $n \leq 2$ the classification of $n$-manifolds clearly implies the converse to (1).
In dimensions $3$ and $4$ the desired result follows straight from the Poincar\'e conjecture proved by Perelman and Freedman. 
Now let  $n \geq 5$.  Given an $n$-dimensional homotopy sphere $\Sigma$ and an $n$-disc $D \subseteq \Sigma^n$, $(\Sigma\setminus \mathrm{int(D)}) \times I$ with an open $n+1$-disc removed away from the boundary is an h-cobordism between $\Sigma \#\overline\Sigma$ and $S^n$. For $n \neq 5$ the h-cobordism theorem then implies that $\Sigma$ is indeed a unit. For $n=5$, every homotopy sphere is h-cobordant to the standard sphere, see e.g.\ \cite[Chapter~X~(6.3)]{Kosinski}, and hence diffeomorphic to $S^5$ by the h-cobordism theorem.
\item This result follows from the resolution of the Poincar\'e Conjecture.\qedhere
\enm
\end{proof}

The following corollary implies in particular Proposition~\ref{prop:decomposition-irreducibles-intro}.

\begin{corollary}\label{cor:infinite-divisor-chain}
Unless $n=4$ and $\Cat = \Diff$ or $\PL$, the monoid $\M{\Cat}{n}$ does not admit infinite divisor chains, and therefore every manifold admits a decomposition into irreducible manifolds.
\end{corollary}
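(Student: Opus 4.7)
My plan is to use the complexity homomorphism $c\colon \M{\Cat}{n}\to \R_{\geq 0}$ of Proposition~\ref{prop:prop-c} as a Noetherian invariant of divisibility. The hypothesis $(n,\Cat)\notin\{(4,\Diff),(4,\PL)\}$ is precisely what Proposition~\ref{cor:units}(2) needs to identify the units of $\M{\Cat}{n}$ with the homotopy spheres, which by Proposition~\ref{prop:prop-c}(2) are exactly the manifolds on which $c$ vanishes.

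The key elementary observation I would add is that $c$ is bounded away from zero on non-units. By construction its values lie in
\[ \{\, m+\ln k \,:\, m\in\Z_{\geq 0},\ k\in \Z_{\geq 1}\,\},\]
whose least positive element is $\ln 2$ (either $m=0$ and $k=2$, giving $\ln 2$, or $m\geq 1$, giving at least $1>\ln 2$). Consequently $c(M)\geq \ln 2$ for every non-unit $M\in \M{\Cat}{n}$.

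From here the first assertion is immediate. Given any chain of proper divisors $M_0, M_1, M_2, \ldots$ with $M_i = M_{i+1}\# N_i$ and each $N_i$ a non-unit, additivity of $c$ from Proposition~\ref{prop:prop-c}(1) yields
\[ c(M_0) \,=\, c(M_k) + \sum_{i=0}^{k-1} c(N_i) \,\geq\, k\ln 2,\]
forcing $k\leq c(M_0)/\ln 2$ and hence termination of the chain. For the existence of irreducible decompositions I would then run the standard Noetherian argument: recursively split any non-irreducible non-unit $M=A\# B$ into two non-units; since each of $c(A)$ and $c(B)$ is at most $c(M)-\ln 2$, the resulting binary tree of factorisations has depth bounded by $c(M)/\ln 2$, and is therefore finite. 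Its leaves exhibit $M$ as a connected sum of irreducibles.

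I do not anticipate any real obstacle: the substantive input has already been supplied by Propositions~\ref{prop:prop-c} and~\ref{cor:units}, and what remains is the elementary bound $c\geq \ln 2$ on non-units, together with a standard monoid-theoretic argument.
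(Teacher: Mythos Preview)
Your proof is correct and follows essentially the same route as the paper: both use that the complexity $c$ is additive (Proposition~\ref{prop:prop-c}(1)) and vanishes precisely on the units under the stated hypothesis (Propositions~\ref{prop:prop-c}(2) and~\ref{cor:units}). Your explicit lower bound $c\geq \ln 2$ on non-units is a nice sharpening---the paper's proof simply asserts that the sequence $c(M_i)$ stabilises, tacitly using that the values of $c$ lie in the well-ordered set $\{m+\ln k\}$, whereas your bound makes the termination quantitative.
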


\begin{proof}
An infinite divisor chain $M_i$ gives rise to a descending sequence of natural numbers under $c$, which becomes stationary after index $i$ say. But then for $j \geq i$ the elements witnessing that $M_j$ is a summand of $M_i$ have vanishing complexity and thus units by Proposition~\ref{cor:units}, so $M_j$ is associated to $M_i$ for all $j \geq i$.
\end{proof}

Similar arguments also allow us to identify some irreducible elements of $\M{\Cat}{n}$.

\begin{corollary}\label{cor:examples-topologically-irreducible}
The manifolds $\rp^{2n-1},\cp^n,\HP^n,\OP^2$ and $S^n\times S^{k-n}$ are irreducible in the monoid  $\M{\Cat}{m}$ for any choice of $\Cat$ and appropriate dimension $m$, except possibly for  $\M{\Diff}{4}$.
\end{corollary}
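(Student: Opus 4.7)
The strategy is to show, for each manifold $X$ in the list, that any decomposition $X \cong M_1 \# M_2$ in $\M{\Cat}{m}$ forces one summand to be a homotopy sphere; by Proposition~\ref{cor:units} such a sphere is automatically a unit in every case covered by the statement (this is precisely where the excluded case $\M{\Diff}{4}$ enters), which yields irreducibility of $X$.

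First I would pin down the fundamental groups via Lemma~\ref{lem:invariants-additive}(1) and the Grushko-Neumann theorem: both summands are simply connected for $\cp^n$, $\HP^n$, $\OP^2$, and $S^p \times S^q$ with $p \geq 2$; for $\rp^{2n-1}$ one factor (say $M_1$) is simply connected while the other has $\pi_1 = \Z/2$; for $S^1 \times S^q$ with $q \geq 2$ one factor is simply connected while the other has $\pi_1 = \Z$. The degenerate case $T^2 = S^1 \times S^1 \in \M{\Cat}{2} \cong \N$ is already handled by the classification of surfaces. With this in place, I would apply Lemma~\ref{lem:invariants-additive}(2) to pull back the cup-product generators of $H^*(X)$ (with $\Z$ coefficients in the simply connected cases and $\F_2$ coefficients for $\rp^{2n-1}$) to pairs $(\alpha_1, \alpha_2) \in H^*(M_1) \times H^*(M_2)$. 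A simply connected summand has vanishing $H^1$, so any degree-one generator descends entirely into the other summand; in the remaining cases, after a WLOG relabelling, the top-degree relation ($x^n = \mu$ for $\cp^n$; $y^n = \mu$ for $\HP^n$; $z^2 = \mu$ for $\OP^2$; $ab = \mu$ for $S^p \times S^q$) forces the relevant powers of the generators to be non-zero in $H^*(M_1)$.

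A direct count of non-zero ranks (or $\F_2$-Betti numbers for $\rp^{2n-1}$) then yields $c(M_1) \geq c(X)$ in every case. Additivity of complexity (Proposition~\ref{prop:prop-c}(1)) therefore forces $c(M_2) = 0$, and Proposition~\ref{prop:prop-c}(2) concludes that $M_2$ is a homotopy sphere.

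The two most delicate cases, and the principal technical obstacles, are $S^p \times S^p$ and $\rp^{2n-1}$. For $S^p \times S^p$ both generators live in the same degree, and one must use $a^2 = 0 = b^2$ in $H^*(X)$ to deduce $a_1^2 = 0 = b_1^2$, so that the non-vanishing of $a_1 b_1$ forces $a_1, b_1$ to be linearly independent in $H^p(M_1)$, giving rank at least $2$. For $\rp^{2n-1}$ one has to work over $\F_2$: the generator $x$ lifts to $x_2 \in H^1(M_2; \F_2)$, its powers satisfy $x_2^i \neq 0$ for all $0 \leq i \leq 2n-1$, and dimension counting in each degree forces $H^i(M_1; \F_2) = 0$ for $0 < i < 2n-1$. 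Since $H_*(\rp^{2n-1})$ is pure $2$-torsion in the middle dimensions, the direct-sum splitting furnished by Lemma~\ref{lem:invariants-additive}(2) upgrades the $\F_2$-vanishing to integral vanishing, so together with simple-connectivity this makes $M_1$ a homotopy sphere.
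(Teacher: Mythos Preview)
Your proof follows exactly the route the paper's one-line argument intends: apply Lemma~\ref{lem:invariants-additive}(1) and~(2) together with Proposition~\ref{cor:units}. Your case analysis is essentially complete, with one exception. In the $S^p\times S^p$ case you claim that $a^2=0$ in $H^*(X)$ forces $a_1^2=0$ in $H^{2p}(M_1)$. But $2p$ is the top degree, and Lemma~\ref{lem:invariants-additive}(2) only identifies $H^{2p}(M_1\# M_2)$ with the \emph{quotient} of $H^{2p}(M_1)\oplus H^{2p}(M_2)$ by the ideal generated by $\mu_1-\mu_2$. Thus $a^2=0$ only yields $a_1^2=c\,\mu_1$ and $a_2^2=-c\,\mu_2$ for some integer $c$, and the linear-independence deduction you draw from $a_1^2=0=b_1^2$ is not justified as stated.

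The quickest repair is to invoke Lemma~\ref{lem:invariants-additive}(3) instead: the intersection form of $S^p\times S^p$ is the rank-two hyperbolic form, which is even (for $p$ even) or alternating (for $p$ odd), and in either case cannot split as an orthogonal sum of two rank-one unimodular forms $\langle\pm 1\rangle$. Hence one summand has $H^p(M_i;\Z)=0$ and therefore complexity zero. With this correction your argument is complete and agrees with the paper's.
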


\begin{proof}
This follows immediately from \cref{lem:invariants-additive}~\eqref{it:fundamentalgroup} and \eqref{it:cupproduct}. 
%We have $c(\cp^2)=c(\ol{\cp}^2)=1$, hence it follows immediately from Proposition~\ref{prop:prop-c} that these two examples are topologically irreducible. We have $c(S^n\times S^n)=2$. But we had seen in 
%Lemma~\ref{lem:invariants-additive} that the intersection form is additive under the connected sum operation. Since the intersection form on $S^n\times S^n$ is not the orthogonal sum of two 1-dimensional forms we see that no summand of $S^n\times S^n$ can have complexity equal to one. Thus $S^n\times S^n$ is topologically irreducible.
\end{proof}

%===========================================
\section{Wall's work on highly connected manifolds}\label{section:wall-hc}
The possibility of prime factorisations in $\Mh{\PL,\high}{2k}$ and $\Mh{\Diff,\high}{2k}$ was studied by Wall in \cite[Problem~2A]{Wa62}. He classified such smooth manifolds in terms of their intersection form and an additional invariant $\alpha \colon H_k(M) \rightarrow \pi_k\mathrm{BSO}(k)$, which is given by representing an element in $H_k(M)$ by an embedded sphere and taking its normal bundle. In the case of piecewise linear manifolds Wall restricts attention to manifolds that can be smoothed away from a point; for even $k$, the map $\alpha$ is then well-defined (i.e. independent of the chosen smoothing) by \cite[Lemma 2 \& Formula (13)]{Wa62} and the injectivity of the stable $J$-homomorphism (which was not known at the time). For $k$ odd, one furthermore has to invest the injectivity of the unstable $J$-homomorphism $\pi_{k}(\mathrm{BSO}(k)) \rightarrow \pi_{2k-1}(S^k)$.

\begin{theorem}[Wall] \label{thm:wall-highly-connected}
	Unique factorisation in $\Mh{\PL,\high}{2k}$ holds only for $k=1,3$ and possibly $k = 7$. In addition to these cases unique factorisation in $\Mh{\Diff,\high}{2k}$ holds exactly for $k\equiv 3,5,7\mod 8$ with $k\neq 15,31$ and possibly $k\neq 63$ $($if there exists a Kervaire sphere in dimension $126$\textup{)}.
\end{theorem}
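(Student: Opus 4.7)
My plan is to deduce the theorem from Wall's classification of highly-connected $2k$-manifolds. Wall identifies $\Mh{\Diff,\high}{2k}$ (for $k \geq 3$) with the monoid of triples $(H, Q, \alpha)$ under orthogonal direct sum, where $H = H_k(M;\Z)$, $Q$ is the unimodular $(-1)^k$-symmetric intersection form, and $\alpha \colon H \to \pi_k \mathrm{BSO}(k)$ is the normal bundle invariant of an embedded sphere representing a given class. Lemma~\ref{lem:invariants-additive} ensures that connected sum of manifolds corresponds to orthogonal sum of invariants, so the unique factorisation question transfers to the algebraic monoid of triples. In the PL category one substitutes $\pi_k \mathrm{BSPL}(k)$ for $\pi_k \mathrm{BSO}(k)$, and the well-definedness of $\alpha$ requires the (un)stable $J$-homomorphism injectivity discussed in the paragraph preceding the theorem statement.

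For the positive direction, the smooth cases $k \equiv 3, 5, 7 \mod 8$ with $k \neq 15, 31, 63$ are exactly Theorem~\ref{thm:high-dimensional-prime-decomposition} recalled earlier: here $Q$ is skew-symmetric and hence hyperbolic (since $k$ is odd), while $\alpha$ is forced to vanish by injectivity of the relevant $J$-homomorphism, leaving only the rank as invariant and giving $\Mh{\Diff, \high}{2k} \cong \N$. The remaining positive cases $k = 1$ (surfaces classified by genus) and $k = 3$ (Smale's classification of $2$-connected $6$-manifolds) yield $\Mh{\Cat, \high}{2k} \cong \N$ in both PL and Diff by direct inspection; the $k = 7$ case in PL remains ``possibly'' because the analogous classification of PL normal bundle data in dimension $14$ is not fully settled.

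For the negative direction, the strategy is to produce explicit failures of UFM in the monoid of algebraic triples and to realise the counterexamples via Wall's realisation theorem. For $k$ even, the monoid of unimodular symmetric bilinear forms over $\Z$ is visibly not a UFM, and one can extend the dimension-four identity $(S^2 \times S^2) \# \cp^2 \cong \cp^2 \# \overline{\cp^2} \# \cp^2$ from the introduction to show $\cp^k$ is not cancellable in $\Mh{\Cat, \high}{2k}$ for every even $k \geq 2$. For $k \equiv 1 \mod 8$ with $k > 1$ in the smooth case, the kernel of stabilisation $\pi_k \mathrm{BSO}(k) \to \pi_k \mathrm{BSO}$ is non-trivial and produces extra $\alpha$-classes that, when realised, give distinct manifolds with coinciding connected sums against a suitable companion. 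For $k \in \{15, 31\}$ (and possibly $63$), the existence of a Kervaire sphere furnishes a non-standard manifold that can be absorbed into connected-sum decompositions in two different ways, obstructing UFM. In the PL category, the failure for all $k \notin \{1, 3, 7\}$ follows because $\pi_k \mathrm{BSPL}(k)$ is strictly larger than $\pi_k \mathrm{BSO}(k)$ in these degrees, giving PL-specific non-standard $\alpha$ data not present smoothly.

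The main technical obstacle will be verifying that each algebraic failure in the form monoid genuinely ascends to a pair of closed highly-connected manifolds with coinciding connected sums but distinct prime factorisations: this requires careful application of Wall's realisation theorem together with the $h$-cobordism theorem to ensure that the manifolds in question are not associated merely via a unit, that is, via a homotopy sphere summand. Once realisability is secured in each residue class mod $8$, the theorem follows by combining the positive results above with the negative constructions.
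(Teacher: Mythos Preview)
Your outline has the right overall shape---reduce to Wall's algebraic classification and then exhibit failures of unique factorisation in the monoid of triples---but two of your negative-direction arguments contain genuine errors.

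First, for even $k$ you propose to show $\cp^k$ is non-cancellable in $\Mh{\Cat,\high}{2k}$. But $\cp^k$ is \emph{not} highly connected for $k>1$: it has $H_2 \cong \Z$, so it does not lie in $\M{\Cat,\high}{2k}$ at all. The paper instead realises an \emph{even} positive-definite unimodular form (e.g.\ $E_8$) by a highly-connected $2k$-manifold $M$ with $\alpha$ taking values in $\ker(\pi_k\mathrm{BSO}(k)\to\pi_k\mathrm{BSO})$, and then observes that $M\#(-M)\cong m(S^k\times S^k)\#\Sigma$ while $S^k\times S^k$ cannot divide $\pm M$ because the latter have definite intersection forms. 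You will need to replace your $\cp^k$ argument with something along these lines.

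Second, your explanation of the PL failure for odd $k\notin\{1,3,7\}$---that $\pi_k\mathrm{BSPL}(k)$ is ``strictly larger'' and supplies extra $\alpha$-data---is not the mechanism the paper uses and is not obviously correct as stated. The actual obstruction is the \emph{Arf--Kervaire invariant}: for odd $k\neq 1,3,7$ every highly-connected $2k$-manifold carries a canonical quadratic refinement of its intersection form, and in the PL category (working with manifolds smoothable away from a point) one can always realise Arf invariant one. Taking such an $M$ and decomposing $M\#M$ into rank-$2$ hyperbolic pieces with vanishing Arf invariant breaks unique factorisation. This same Arf--Kervaire argument is what handles the smooth cases $k=15,31$ (and possibly $63$), not merely ``absorbing a Kervaire sphere''. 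Finally, for $k\equiv 1\bmod 8$ the relevant feature is whether the \emph{stabilised} invariant $S\alpha\colon H_k(M)\to\pi_k\mathrm{BSO}\cong\Z/2$ is non-trivial (Wall's ``type''), not the kernel of stabilisation; the failure of unique factorisation comes from $W_0\#W_1\cong W_0\#W_0$ with $W_0,W_1$ differing only in type.
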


In all cases that unique factorisation holds, the monoid in question is actually isomorphic to $\N$ via half the rank of the middle homology group, except possibly $\Mh{\PL,\high}{14}$. In fact there does not seem to be a full description of $\Mh{\PL,\high}{n}$ (or $\Mh{\Top,\high}{n}$) in the literature. Let us remark, that the work of Kirby-Siebenmann implies $\M{\PL,\high}{n} \cong \M{\Top,\high}{n}$, once $n \geq 10$, as the obstruction to finding a $\PL$-structure on a topological manifold $M$ is located in $H^4(M;\mathbb Z/2)$, with $H^3(M;\mathbb Z/2)$ acting transitively on isotopy classes of $\PL$-structures. Wall's argument also shows that unique factorisation fails in $\Mh{\Top,\high}{8}$.

\begin{proof}	
	It follows from Wall's work that for $k \geq 4$ even, the monoids $\Mh{\PL,\high}{2k}$ and $\Mh{\Diff,\high}{2k}$
	never admit unique factorisations; this can be seen by picking an even positive definite unimodular form, and realizing it by a $(k-1)$-connected $2k$-dimensional manifold $M$ with $\alpha$-invariant whose values lie in $\mathrm{ker}(\pi_k\mathrm{BSO}(2k) \rightarrow \pi_k\mathrm{BSO})$; such $\alpha$ is uniquely determined by the intersection form by \cite[Lemma 2]{Wa62} and the computation of $\pi_k\mathrm{BSO}(2k)$ on \cite[p.~171]{Wa62}. By  \cite[Proposition~5]{Wa62}, in this case a  smooth realizing manifold exists whenever the signature is divisible by a certain index. Then $M\#-M\cong m(S^k\times S^k) \# \Sigma$ for some homotopy sphere $\Sigma$, where $m$ is the rank of $H_k(M)$. But $S^k\times S^k$ cannot divide $M$ or $-M$. Indeed, this follows from 
	Lemma~\ref{lem:invariants-additive} (3), the fact that the intersection form of $S^k\times S^k$ is indefinite and the fact that the intersection forms of $\pm M$ are definite. See Proposition~\ref{prop:4dim} below for a stronger statement in the case $k=2$.
	
	For odd values of $k$ there are several cases to be distinguished. To start, for $k=1,3$ and $\Cat = \PL$ or $k=1,3,7$ and $\Cat = \Diff$ the monoid $\Mh{\Cat,\high}{2k}$ is isomorphic to $\N$ via half the rank of the middle homology by \cite[Lemma~5]{Wa62}.
	
	For other odd values of $k\neq 1,3,7$ unique decomposition in $\M{\PL,\high}{2k}$ never holds. This can be seen via the Arf-Kervaire invariant; this is the Arf invariant of a certain quadratic refinement of the intersection form. By the work of Jones and Rees and Stong \cite{Jones, Stong} any highly connected manifold of even dimension not $2,4,8$ or $16$ possesses a canonical such refinement. We proceed by taking a manifold $M$ which is smoothable away from a point with non-trivial Arf-Kervaire invariant and then decompose $M\# M$ into manifolds with vanishing Kervaire invariant and intersection form hyperbolic of rank $2$; this is possible by \cite[Lemmata~5 and~9]{Wa62} and the fact that the Arf-Kervaire invariant is additive.
	%$\begin{pmatrix}
	%0&1\\-1&0
	%\end{pmatrix}$. 
	
	For $\M{\Diff, \high}{2k}$ the argument above works equally well if there exists a smooth $2k$-manifold with Kervaire invariant one (which also implies the existence of an irreducible one by Corollary~\ref{cor:infinite-divisor-chain}). This famously is the case if and only if $k=1,3,7,15,31$ and possibly $k=63$  \cite{HHR}, which rules out unique factorisation in these dimensions. 
	
	For $k = 3,5,7\mod 8$ with $k\neq 3,7,15,31,63$ the monoid $\Mh{\Diff,\high}{2k}$ is in fact isomorphic to $\N$ via half the rank of the middle homology by \cite[Lemma~5]{Wa62} (with the case $\Mh{\Diff,\high}{126}$ being open). In contrast, for $k\equiv 1\mod 8$ the failure of unique decomposability can be seen by considering the composite homomorphism \[S\alpha \colon H_k(M) \xrightarrow{\alpha} \pi_k\mathrm{BSO}(k) \longrightarrow \pi_k\mathrm{BSO} = \mathbb Z/2:\]
Wall says that a manifold is of type 0 if $S\alpha$ is non-trivial and of type 1 if $S\alpha$ is trivial, see \cite[p.~173,~Case~5]{Wa62}. Note that the type is not additive but the connected sum of two manifolds has type 1 if and only if both manifolds have type 1. Hence by \cite[Theorem~3]{Wa62} we can pick any two irreducible manifolds $W_0,W_1$ such the invariants from \cite[Lemma~5]{Wa62} agree except that the type of $W_i$ is $i$. Then $W_0\#W_1\cong W_0\#W_0$ and unique decomposition fails.
\end{proof}

\begin{remark}Given the first part of the proof above one might wonder whether for any highly connected manifold $M$, whose intersection form is even, $M \# \ol{M}$ is homeomorphic to $\#^k (S^n \times S^n)$. This is in fact not correct as the following example shows. Let $M$ be the total space of an $S^4$-bundle over $S^4$ with non-trivial first Pontryagin class and trivial Euler class. It is then easy to see that the intersection form of $M$ is even. However, since the rational Pontryagin classes are homeomorphism invariants we find that $M \#\ol{M}$ is not homeomorphic to $S^4\times S^4 \# S^4 \times S^4$.
\end{remark}

Cancellation in $\M{\PL,\high}{2k}$ and $\M{\Diff,\high}{2k}$ was also studied by Wall in \cite[Problem~2C]{Wa62}.
\begin{theorem}[Wall]\label{nchc}
	Cancellation in $\Mh{\Diff,\high}{2k}$ holds if and only if either $k=1$ or $k\equiv 3,5,7\mod 8$. In $\M{\PL,\high}{2k}$ the ``only if'' part still holds.
\end{theorem}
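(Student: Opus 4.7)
The theorem has two halves: a positive direction in $\Diff$ (cancellation when $k=1$ or $k\equiv 3,5,7\pmod 8$), and a negative direction already in $\PL$ (non-cancellation otherwise), which lifts to $\Diff$ because the witnessing manifolds can be chosen smooth. I would handle them separately.

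\emph{Positive direction.} The case $k=1$ reduces to $\Mh{\Diff}{2}\cong\N$; for $k\equiv 3,5,7\pmod 8$ with $k\notin\{3,7,15,31,63\}$, Theorem~\ref{thm:high-dimensional-prime-decomposition} already gives $\Mh{\Diff,\high}{2k}\cong\N$ and cancellation is automatic. For the remaining cases $k=3,7$ and the Kervaire dimensions $k=15,31$ (and $k=63$ conditional on the Kervaire-invariant-one question in dimension $126$), I would invoke Wall's classification \cite[Lemma~5]{Wa62}: a $(k-1)$-connected smooth $2k$-manifold $M$ with $k$ odd is determined, up to diffeomorphism, by a triple $(m(M),\epsilon(M),\sigma(M))\in\N\times\Z/2\times\Theta_{2k}$ consisting of half the rank of $H_k$, the Kervaire invariant (vacuous when no Kervaire sphere exists), and a homotopy-sphere decoration. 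All three invariants are additive under connected sum and each target monoid is cancellative, so $\Mh{\Diff,\high}{2k}$ is cancellative. The failure of unique factorisation in the Kervaire dimensions corresponds to distinct triples summing to the same total, rather than to invariants failing to determine the manifold, and therefore does not obstruct cancellation.

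\emph{Negative direction, $k\equiv 1\pmod 8$.} I would recycle the construction from the proof of Theorem~\ref{thm:wall-highly-connected}: pick irreducible $W_0,W_1\in\Mh{\PL,\high}{2k}$ whose invariants from \cite[Lemma~5]{Wa62} agree except that $W_i$ has Wall \emph{type} $i$, where the type records non-triviality of the composite $H_k(M)\xrightarrow{\alpha}\pi_k\mathrm{BSO}(k)\to\pi_k\mathrm{BSO}\cong\Z/2$. Since the type of a connected sum is $1$ iff both summands have type $1$, $W_0\#W_0$ and $W_0\#W_1$ both have type $0$ and agree on all other invariants, so Wall's classification gives $W_0\#W_0\cong W_0\#W_1$ while $W_0\not\cong W_1$ because their types differ. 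Thus $W_0$ is not cancellable, and as the example is smooth it equally witnesses non-cancellation in $\Mh{\Diff,\high}{2k}$.

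\emph{Negative direction, $k$ even.} For $k=2$ the classical identity $\cp^2\#\overline{\cp^2}\#\cp^2\cong\cp^2\#(S^2\times S^2)$ combined with $\cp^2\#\overline{\cp^2}\not\cong S^2\times S^2$ (distinguishable by cup product) shows $\cp^2$ is not cancellable. For $k\geq 4$ even I would model an analogue on this: find highly connected smooth $M_1,M_2$ with the same even intersection form $Q$ but $\alpha$-invariants $\alpha_1\neq\alpha_2$ that become conjugate after stabilising $Q$ by a hyperbolic summand $H$. Then the classifying data of $M_1\#(S^k\times S^k)$ and $M_2\#(S^k\times S^k)$ coincide, so these manifolds are diffeomorphic, exhibiting $S^k\times S^k$ as non-cancellable in both $\Mh{\PL,\high}{2k}$ and $\Mh{\Diff,\high}{2k}$. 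The main obstacle I foresee is producing this pair: one has to exploit the failure of $\alpha$ to be a homomorphism, use the automorphisms of the stabilised lattice $Q\oplus H$ to identify $\alpha_1\oplus 0$ with $\alpha_2\oplus 0$, and verify that $M_1\not\cong M_2$ before stabilisation. Wall's machinery in \cite{Wa62} supplies the framework, but this concrete bookkeeping is not entirely routine.
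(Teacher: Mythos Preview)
Your proposal is essentially correct and follows the same strategy as the paper's own proof: embed into a cancellative target for the positive direction, and recycle the type-mismatch and $\alpha$-invariant examples from the proof of Theorem~\ref{thm:wall-highly-connected} for the negative direction.

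Two minor remarks. First, in the positive direction you unnecessarily separate out $k=3,7$: these are already covered by Theorem~\ref{thm:high-dimensional-prime-decomposition}, whose excluded values are only $15,31,63$. The paper in fact handles \emph{all} $k\equiv 3,5,7\pmod 8$ at once by observing that Wall's classification embeds $\Mh{\Diff,\high}{2k}$ into $\Z\times\Z/2$ via half the rank and the Arf--Kervaire invariant; this is slightly cleaner than your case split and your triple $(m,\epsilon,\sigma)$, though the content is the same once one quotients by $\Theta_{2k}$. Second, for $k\geq 4$ even the paper is exactly as terse as your obstacle anticipates: it asserts only that $S^k\times S^k$ is non-cancellable ``by an argument similar to the one above'' without spelling out the witness pair. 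Your outline of how to produce $M_1,M_2$ with equal intersection form but $\alpha$-invariants that become conjugate after one hyperbolic stabilisation is a fair expansion of what is left implicit there.
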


\begin{proof}
	Wall's classification directly shows that $S^k \times S^k$ is not cancellable in either $\M{\PL,\high}{2k}$ and $\M{\Diff,\high}{2k}$ if $k$ is even by an argument similar to the one above. 
	For $k=2$, the failure of cancellation follows from the fact that $(S^2\times S^2)\#\cp^2$ and $\cp^2\#\overline{\cp}^2\#\cp^2$ are diffeomorphic.
	
	By \cite[Lemma~5]{Wa62} and the classification of almost closed $n-1$-connected $2n$-manifolds by their $n$-types (see \cite[page 170]{Wa62}), for $k\equiv 3,5,7\mod 8$ the monoid $\M{\Diff,\high}{2k}$ embeds into $\Z\times\Z/2$ via the rank and the Arf-Kervaire invariant. If $k\equiv 1 \mod 8$, $k \neq 1$, then the examples from the previous proof exhibit the failure of cancellation. 
\end{proof}

A similar analysis of $\Mh{\Cat,\high}{2k+1}$ can be carried out using \cite{Wall67}, but we refrain from spelling this out here.

%==========================================
\section{Thickenings of finite CW-complexes}\label{section:thickenings}
In this section we will see that one can associate to a finite CW-complex a smooth manifold
which is unique in an appropriate sense. This procedure allows us to translate information about CW-complexes to manifolds. We will use this procedure in  the proofs of Theorems \ref{thm:no-unique-prime-decomposition-high-dimension},
\ref{thm:no-unique-prime-decomposition-dimension-4} and \ref{thm:simply-connected-case} in the following sections.

\begin{convention}
	By a \emph{finite complex} we mean a finite connected CW-complex and by a \emph{finite $n$-complex} we mean a finite connected $n$-dimensional CW-complex.
\end{convention}

%==========================================
\subsection{Thickenings of finite CW-complexes}
In this section we will summarize the theory of smooth thickenings of CW-complexes as developed in \cite{Wa66}. As is explained in \cite{Wa66} there is also an analogous theory of PL-thickenings.

We start out with the following definition, which is an adaptation of the definition on  \cite[p.~74]{Wa66} for our purposes. 

\begin{definition}
Let $X$ be a finite complex.
\bnm
\item  A \emph{$k$-thickening of $X$} is a pair $(M,\phi)$ where $M$ is an oriented, smooth, compact $k$-dimensional manifold with trivial tangent bundle, which has the property that the inclusion induced  map $\pi_1(\partial M)\to \pi_1(M)$ is an isomorphism and where $\phi\colon X\to M$ is a simple homotopy equivalence. 
\item Two $k$-thickenings $(M,\phi)$ and $(N,\psi)$ of $X$ are called \emph{equivalent} if there exists an orientation-preserving diffeomorphism $f\colon M\to N$ such that $f\circ \phi$ is homotopic to $\psi$.
\enm
\end{definition}

Note that $\phi$ does not have to be an embedding in the definition above. 

\begin{theorem}\label{thm:thickenings-exist}
	\label{prop:unique-thickening}
Let $X$ be a finite $n$-complex.
\begin{enumerate}[font=\normalfont]
	\item If $k\geq 2n$, then there exists a $k$-thickening of $X$. 
	\item If $k\geq 2n+1$ and $k\geq 6$,
then all $k$-thickenings of $X$ are equivalent.
\end{enumerate}
\end{theorem}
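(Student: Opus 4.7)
The plan is to treat existence and uniqueness separately, using handle-theoretic constructions for part (1) and the $s$-cobordism theorem for part (2).

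For part (1), I would build $M$ inductively mirroring a CW decomposition of $X$. Starting with a single $0$-handle $D^k$, for each $j$-cell of $X$ with $1\leq j\leq n$, attach a $j$-handle $D^j\times D^{k-j}$ along an embedding $S^{j-1}\times D^{k-j}\hookrightarrow \partial M_{j-1}$. Such embeddings exist by general position since $2(j-1) < k-1$ whenever $k\geq 2j$, which is ensured by $k\geq 2n$. The final manifold $M$ contains the union of the handle cores as a spine, giving a simple homotopy equivalence $X\simeq M$ by construction (the handle attachments correspond cell-by-cell to the CW structure, so torsion vanishes on the nose). The framings of successive handles can be adjusted so that $TM$ is trivial via obstruction theory on the $n$-complex $X$, since the relevant obstruction groups $H^i(X;\pi_{i-1}SO(k))$ are controlled in this range. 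The condition that $\pi_1(\partial M)\to\pi_1(M)$ is an isomorphism then follows by general position: loops and spanning $2$-disks in $M$ may be pushed off the $n$-dimensional spine provided the codimension is at least $3$, which is ensured by $k\geq n+3$ (and the remaining low-dimensional cases can be verified by inspection).

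For part (2), given two $k$-thickenings $(M,\phi)$ and $(N,\psi)$, the plan is to construct an $s$-cobordism $W^{k+1}$ between $M$ and $N$ and invoke the $s$-cobordism theorem. Since $k\geq 2n+1$, general position allows us to replace $\phi$ and $\psi$ by embeddings exhibiting $X$ as a spine of each thickening. Form a CW-complex $Z$ of dimension at most $n+1$ encoding the simple homotopy equivalence $\psi\circ\phi^{-1}\colon M\to N$ — concretely, take a reduced mapping cylinder of a cellular representative of this equivalence, collapsed to its simple-homotopy-theoretic essence so that only cells of dimension $\leq n+1$ survive. Apply part (1) in dimension $k+1$ to thicken $Z$: the hypothesis $k\geq 2n+1$ gives $k+1\geq 2(n+1)$ precisely as required. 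Arrange that $\partial W$ contains $M$ and $N$ as codimension-$0$ submanifolds, and check that both inclusions $M\hookrightarrow W$ and $N\hookrightarrow W$ are \emph{simple} homotopy equivalences (this propagates from the simplicity of $\phi$ and $\psi$). The $s$-cobordism theorem, applicable because $\dim W = k+1\geq 7$, yields $W\cong M\times [0,1]$ rel boundary, producing both the diffeomorphism $f\colon M\to N$ and the homotopy $f\circ\phi\simeq\psi$ from the product structure.

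The main obstacle will be the construction of the cobordism $W$ so that its boundary decomposes cleanly as $\partial W\supseteq M\sqcup N$, and the verification that the Whitehead torsions of the boundary inclusions vanish. Both hinge on carefully tracking the \emph{simplicity} of $\phi$ and $\psi$ through the mapping cylinder and the thickening; without this, we would only obtain an $h$-cobordism, for which the classification is governed by non-trivial Whitehead torsion. In essence, the uniqueness statement is the simple-homotopy analogue of the uniqueness of regular neighborhoods, with the $s$-cobordism theorem providing the final bridge from simple homotopy equivalence to diffeomorphism — explaining the need for both the stable-range bound $k\geq 2n+1$ and the dimension bound $k\geq 6$.
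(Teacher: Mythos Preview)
The paper's own proof is a one-line citation to Wall \cite{Wa66}, so your proposal is an attempt to supply the argument that the paper outsources. Your treatment of part~(1) is essentially the standard handle-by-handle construction and is fine, modulo the hand-waving about the $\pi_1(\partial M)\to\pi_1(M)$ condition in the borderline cases (e.g.\ $n=2$, $k=4$, where the codimension is only~$2$ and your general-position argument does not apply; you should check that your construction actually produces a thickening in the sense of the paper's definition there).

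Part~(2), however, has a genuine gap. You propose to thicken an $(n{+}1)$-complex $Z$ to a $(k{+}1)$-manifold $W$ using part~(1), and then ``arrange that $\partial W$ contains $M$ and $N$ as codimension-$0$ submanifolds''. But part~(1) as stated gives no control whatsoever over $\partial W$: it only produces \emph{some} thickening of $Z$, and there is no reason the particular thickenings $M$ and $N$ of the subcomplex $X\subset Z$ appear in its boundary. To force $M$ and $N$ into $\partial W$ you would need a \emph{relative} existence theorem --- extend a prescribed thickening of a subcomplex to a thickening of the whole complex --- which you have not established, and which is in fact of comparable difficulty to the uniqueness statement you are trying to prove. (Invoking uniqueness of the restricted thickenings would be circular.)

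Wall's actual argument proceeds differently: in the stable range $k\geq 2n+1$ one embeds the spine $X$ of $M$ into the interior of $N$ (general position makes $\psi\circ\phi^{-1}$ isotopic to an embedding), takes a regular neighbourhood $M'\subset N$, and shows that $N\setminus\operatorname{int}(M')$ is an $s$-cobordism from $\partial M'$ to $\partial N$; the $s$-cobordism theorem (hence $k\geq 6$) then gives $N\cong M'$, and a separate argument identifies $M'$ with $M$. The $s$-cobordism theorem is indeed the engine, as you anticipated, but the cobordism arises as a \emph{complement} inside one of the given thickenings rather than from thickening an auxiliary complex.
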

\begin{proof}
	The theorem follows from \cite[p.~76]{Wa66} using that every map from a path-connected space to $\R^k$ is 1-connected.
\end{proof}
%
%The following uniqueness result is proved in \cite[p.~76\& Proposition 5.1]{Wa66}.
%
%
%\begin{proposition}
%Let $X$ be a finite $n$-complex. 
%\end{proposition}

\begin{definition}
Let $k\geq 2n+1$ and $k\geq 6$.
Let $X$ be a finite $n$-complex. We denote by $N^k(X)$ the oriented diffeomorphism type of the $k$-dimensional thickening of $X$. In our notation we will not distinguish between $N^k(X)$ and any representative thereof.
\end{definition}

For convenience we state the following example.

\begin{lemma} \label{lem:thicken-sphere-n}
If $k \ge 2n+1$ and $k\geq 6$, then $N^k(S^n) = S^n \times B^{k-n}$.
\end{lemma}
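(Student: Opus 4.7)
The plan is to exhibit $S^n \times B^{k-n}$ as a $k$-thickening of $S^n$ in the sense of the definition, and then appeal to the uniqueness clause of Theorem~\ref{thm:thickenings-exist} to conclude that it represents $N^k(S^n)$.

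First I would dispose of the evident conditions: $S^n \times B^{k-n}$ is a compact oriented smooth $k$-manifold, so I need only verify stable parallelizability, the $\pi_1$-condition on the boundary, and produce a simple homotopy equivalence from $S^n$. For the tangent bundle, splitting gives
\[ T(S^n \times B^{k-n}) \;\cong\; \pi_1^* TS^n \oplus \pi_2^* TB^{k-n} \;\cong\; TS^n \oplus \epsilon^{k-n}. \]
Since $S^n$ embeds in $\R^{n+1}$ with trivial normal bundle, $TS^n \oplus \epsilon^1 \cong \epsilon^{n+1}$, and the hypothesis $k \geq 2n+1$ gives $k-n \geq n+1 \geq 2$ (recalling that our CW-complexes are connected, so $n\geq 1$), whence $TS^n \oplus \epsilon^{k-n} \cong \epsilon^k$ is trivial.

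Next I would handle $\pi_1$. The boundary is $S^n \times S^{k-n-1}$. For $n \geq 2$ both factors are simply connected (since $k-n-1 \geq n \geq 2$), so both $\pi_1(\partial M)$ and $\pi_1(M)$ are trivial. For $n=1$ the constraint $k \geq 6$ gives $k-n-1 \geq 4$, so $\pi_1(\partial M) = \Z \times 1 = \Z = \pi_1(M)$, and the inclusion induces the identity under the obvious generator. In either case the $\pi_1$-condition is satisfied.

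For the simple homotopy equivalence, the inclusion $\phi\colon S^n \hookrightarrow S^n \times \{0\} \subset S^n \times B^{k-n}$ is a homotopy equivalence with inverse the projection, since $B^{k-n}$ is contractible. Because $\mathrm{Wh}(\pi_1(S^n)) = 0$ in both remaining cases ($\mathrm{Wh}(1)=0$ for $n\geq 2$, and $\mathrm{Wh}(\Z)=0$ by Bass--Heller--Swan for $n=1$), every homotopy equivalence between CW-complexes with these fundamental groups is simple, so $\phi$ is a simple homotopy equivalence. Thus $(S^n \times B^{k-n}, \phi)$ is a $k$-thickening of $S^n$, and Theorem~\ref{thm:thickenings-exist}(2) identifies it with $N^k(S^n)$. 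No step is a serious obstacle; the only point requiring slight care is the separate treatment of $n=1$ when verifying the $\pi_1$-condition and the triviality of the Whitehead group.
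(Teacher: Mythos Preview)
Your argument is correct. The paper does not actually supply a proof of this lemma; it is stated there merely ``for convenience'' as an evident example, so your verification is exactly the routine check the reader is expected to fill in: exhibit $S^n \times B^{k-n}$ as a $k$-thickening and invoke the uniqueness clause of Theorem~\ref{thm:thickenings-exist}(2). One small simplification you could make: rather than appealing to the vanishing of $\mathrm{Wh}(1)$ and $\mathrm{Wh}(\Z)$, you can observe directly that $S^n \times B^{k-n}$ collapses onto $S^n \times \{0\}$ (the disc factor is convex), and a collapse is the prototype of a simple homotopy equivalence --- this avoids the case split on $n$ for that step.
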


\begin{proposition}\label{prop:unique-thickening-simple-homotopic}
Let $X$ and $Y$ be finite complexes. We suppose that $k\geq 2\dim(X)+1$, $k\geq 2\dim(Y)+1$  and $k\geq 6$. If $X$ and $Y$ are simple homotopy equivalent, then there exists an orientation-preserving diffeomorphism from  $N^k(X)$ to $N^k(Y)$. 
\end{proposition}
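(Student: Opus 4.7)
The plan is to deduce this as an immediate consequence of the uniqueness statement Theorem~\ref{prop:unique-thickening}(2), by observing that a thickening of $X$ becomes, after composition with a homotopy inverse, a thickening of $Y$ as well.

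More precisely, choose representatives $(M,\phi\colon X\to M)$ of $N^k(X)$ and $(N,\psi\colon Y\to N)$ of $N^k(Y)$. Pick a simple homotopy equivalence $f\colon X\to Y$ and let $g\colon Y\to X$ be any homotopy inverse. Since Whitehead torsion is additive under composition and $\tau(f)=0$, we have $\tau(g)=0$ as well, so $g$ is also a simple homotopy equivalence. Hence the composite $\phi\circ g\colon Y\to M$ is a simple homotopy equivalence. The underlying manifold $M$ is unchanged, so it is still a compact oriented smooth $k$-manifold with stably trivial tangent bundle whose boundary inclusion induces an isomorphism on $\pi_1$. Consequently $(M,\phi\circ g)$ qualifies as a $k$-thickening of $Y$.

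Now apply the uniqueness clause Theorem~\ref{prop:unique-thickening}(2) to $Y$: since $k\geq 2\dim(Y)+1$ and $k\geq 6$, any two $k$-thickenings of $Y$ are equivalent. In particular, $(M,\phi\circ g)$ and $(N,\psi)$ are equivalent, which by definition provides an orientation-preserving diffeomorphism $M\to N$. This is precisely the claim $N^k(X)\cong N^k(Y)$ as oriented diffeomorphism types.

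There is no real obstacle here; the only point that deserves a sentence of care is that the dimension bound $k\geq 2\dim(Y)+1$ (as opposed to just $k\geq 2\dim(X)+1$) is needed so that the uniqueness statement can be applied to $Y$, and this is exactly the hypothesis that has been built into the statement. The symmetric hypothesis on $\dim(X)$ is what guarantees that $N^k(X)$ is well defined in the first place, so both dimension assumptions are used, each for the corresponding space.
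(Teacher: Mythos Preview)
Your argument is correct and follows essentially the same route as the paper's proof. The only cosmetic difference is that the paper composes in the other direction: it views $(N,\psi\circ f)$ as a $k$-thickening of $X$ and applies Theorem~\ref{prop:unique-thickening}(2) to $X$, thereby avoiding the (easy but extra) step of checking that the homotopy inverse $g$ is again simple.
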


\begin{proof}
Let $f\colon X\to Y$ be a simple homotopy equivalence. 
Let $(M,\phi)$ be a $k$-thickening for $X$ and let $(N,\psi)$ be a $k$-thickening for $Y$.
Note that $(N,\psi\circ f)$ is a $k$-thickening for $X$. It follows from Theorem~\ref{prop:unique-thickening}, and our dimension restrictions on $k$, that $N=N^k(X)$ is diffeomorphic to $M=N^k(Y)$. 
\end{proof}

\begin{lemma} \label{lem:thicken-sum-m}
Let $X$ and $Y$ be finite complexes. If   $k\geq 2\dim(X)+1$, $k\geq 2\dim(Y)+1$  and $k\geq 6$, then 
\[ N^k(X\vee Y)\,\,=\,\, N^k(X) \#_b N^k(Y),\]
where ``$\#_b$'' denotes the boundary connected sum.
\end{lemma}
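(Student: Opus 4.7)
The plan is to verify that $N^k(X) \#_b N^k(Y)$ satisfies the definition of a $k$-thickening of $X \vee Y$ and then invoke the uniqueness clause of Theorem~\ref{prop:unique-thickening}. Write $(M,\phi) = N^k(X)$ and $(N,\psi) = N^k(Y)$. Since $\dim(X \vee Y) = \max(\dim X, \dim Y)$, the dimension hypotheses on $k$ ensure that $N^k(X \vee Y)$ is also well-defined and unique.

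First I would check the two structural conditions. The boundary connected sum $M \#_b N$ is obtained from $M \sqcup N$ by attaching a single $1$-handle, so it is homotopy equivalent to the ordinary wedge $M \vee N$; the classifying map of its tangent bundle therefore factors through a wedge of the individually null-homotopic maps $M \to \mathrm{BO}(k)$ and $N \to \mathrm{BO}(k)$, and hence $T(M \#_b N)$ is trivial. For the boundary condition, $\partial(M \#_b N) = \partial M \#_{S^{k-2}} \partial N$; since $k \geq 6$ the gluing sphere $S^{k-2}$ is simply connected, and van Kampen yields both $\pi_1(M \#_b N) \cong \pi_1(M) * \pi_1(N)$ and $\pi_1(\partial(M \#_b N)) \cong \pi_1(\partial M) * \pi_1(\partial N)$. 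Naturality then identifies the inclusion-induced map on $\pi_1$ with the free product of the isomorphisms $\pi_1(\partial M) \to \pi_1(M)$ and $\pi_1(\partial N) \to \pi_1(N)$, so it is itself an isomorphism.

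The main step is to exhibit a simple homotopy equivalence $f \colon X \vee Y \to M \#_b N$. After small homotopies I can arrange $\phi$ and $\psi$ to send the chosen basepoints into the $1$-handle region, and a short path through the handle combines them into a single map $f = \phi \vee \psi$. This map factors up to homotopy through the deformation retraction $M \#_b N \to M \vee N$ that collapses the $1$-handle; collapsing a trivial $1$-handle is a composition of elementary simple-homotopy operations, so this retraction is simple. The residual map $X \vee Y \to M \vee N$ is a wedge of simple homotopy equivalences, and its Whitehead torsion is the image of $(\tau(\phi),\tau(\psi)) = (0,0)$ under the split inclusion $\mathrm{Wh}(\pi_1 X) \oplus \mathrm{Wh}(\pi_1 Y) \hookrightarrow \mathrm{Wh}(\pi_1(X \vee Y))$, hence vanishes.

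The point I expect to require the most care is the simplicity (as opposed to mere homotopy equivalence) of both the $1$-handle collapse and the wedge map; a rigorous verification relies on the additivity of Whitehead torsion under free products of groups and on the handle-theoretic description of the boundary connected sum. Once this is in place, $M \#_b N$ meets all three requirements to be a $k$-thickening of $X \vee Y$, and Theorem~\ref{prop:unique-thickening}(2) identifies it with $N^k(X \vee Y)$, as claimed.
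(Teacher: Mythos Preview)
Your argument is correct and follows the same route as the paper: build $\phi\vee\psi\colon X\vee Y\to M\#_b N$, observe that it is a simple homotopy equivalence (using that both $\phi\vee\psi\colon X\vee Y\to M\vee N$ and the collapse/inclusion between $M\vee N$ and $M\#_b N$ are simple), and then invoke Theorem~\ref{prop:unique-thickening}(2). The paper's proof is terser---it asserts the two simple homotopy equivalences without the Whitehead-torsion bookkeeping and absorbs the tangent-bundle and $\pi_1$-of-boundary checks into the phrase ``almost immediately''---but your more explicit verification of those structural conditions is a welcome addition, not a different strategy.
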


\begin{proof}
Let $(M,\phi)$ and $(N,\psi)$ be $k$-thickenings of $X$ and $Y$, respectively.  
After a simple homotopy we can and will assume that the the image of the wedge points under $\phi$ and $\psi$ lies on the boundary of $M$ and $N$. 
Note that  $\phi\vee \psi\colon X\vee Y\to M\vee N$ is a simple homotopy equivalence
and note that the inclusion $M\vee N\to M\#_b N$ is a simple homotopy equivalence.
Thus we see that the map $\phi\vee \psi\colon X\vee Y\to M\#_b N$ is a simple homotopy equivalence. It follows almost immediately from 
Theorem~\ref{prop:unique-thickening} that $N^k(X\vee Y)=N^k(X) \#_b N^k(Y)$.
\end{proof}

%The following lemma is proved on \cite[p.~73]{Wa66}.

%\begin{lemma}\label{lem:highly-connected-map}
%Let $k\geq 2n+1$.
%Let $X$ be a finite connected $n$-dimensional CW-complex. The map $i\colon \partial N^k(X) \hookrightarrow N^{k+1}(X) \simeq X$ is $(k-n)$-connected, i.e.\ the induced map
%$i_*\colon \pi_j(N^k(X))\to \pi_j(N^{k+1}(X))=\pi_j(X)$ is an isomorphism for $j < k-n$ and is an epimorphism for $j=k-n$.
%\end{lemma}

%==========================================
\subsection{Boundaries of thickenings of finite complexes}

\begin{definition}
Let $X$ be a finite complex and let $k\geq 2\dim(X)$ and $k\geq 5$.
We write  $M^k(X):=\partial N^{k+1}(X)$. Recall that $N^{k+1}(X)$ is an oriented manifold 
and we equip $M^k(X)$ with the corresponding orientation.
\end{definition}

The following lemma is an immediate consequence of Lemma~\ref{lem:thicken-sphere-n}.

\begin{lemma} \label{lem:thicken-sphere}
If $k \ge 2n$ and $k\geq 5$, then $M^k(S^n) = S^n \times S^{k-n}$.
\end{lemma}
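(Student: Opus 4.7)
The plan is to observe that this is an immediate corollary of the preceding Lemma (the identification $N^k(S^n) = S^n \times B^{k-n}$), combined with the definition $M^k(X) := \partial N^{k+1}(X)$.

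First I would apply the thickening lemma one dimension up: the hypotheses $k \geq 2n$ and $k \geq 5$ translate to $k+1 \geq 2n+1$ and $k+1 \geq 6$, which are exactly the hypotheses needed to apply Lemma~\ref{lem:thicken-sphere-n} to $N^{k+1}(S^n)$. This yields an orientation-preserving diffeomorphism
\[ N^{k+1}(S^n) \;=\; S^n \times \overline{B}^{k+1-n}. \]

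Next I would take boundaries. Since $\partial(S^n \times \overline{B}^{k+1-n}) = S^n \times S^{k-n}$, with the product orientation induced from the orientations of $S^n$ and $\overline{B}^{k+1-n}$, we obtain
\[ M^k(S^n) \;=\; \partial N^{k+1}(S^n) \;=\; S^n \times S^{k-n}, \]
as claimed.

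There is no substantive obstacle: the only minor bookkeeping is checking that the numerical hypotheses of Lemma~\ref{lem:thicken-sphere-n} carry over under the shift $k \mapsto k+1$, and that orientations match up under the boundary operation, both of which are routine.
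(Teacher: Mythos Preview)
Your proposal is correct and takes exactly the same approach as the paper, which simply states that the lemma is an immediate consequence of Lemma~\ref{lem:thicken-sphere-n}. Your explicit check that the hypotheses $k\geq 2n$ and $k\geq 5$ become $k+1\geq 2n+1$ and $k+1\geq 6$ under the shift is precisely the bookkeeping the paper leaves implicit.
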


In the following proposition we summarize a few properties of $M^k(X)$. 

\begin{proposition}\label{prop:thicken-sum} \label{prop:thicken-homotopy}
For $n\in \N$ let $X$ and $Y$ be finite $n$-complexes. Furthermore let $k\in \N$ with  $k\geq 2n$ and $k\geq 5$. 
\bnm[font=\normalfont]
\item $M^k(X)$ is a closed oriented $k$-dimensional manifold,
\item if $X$ and $Y$ are simple homotopy equivalent, then there exists an orientation-preserving diffeomorphism from  $M^k(X)$ to $M^k(Y)$,  
\item $M^k(X\vee Y)=M^k(X)\# M^k(Y)$.
\enm
If we have in fact $k\geq 2n+1$, then the following also holds:
\bnm
\item[(4)] if $M^k(X)$ and $M^k(Y)$ are homotopy equivalent, then $X$ and $Y$ are homotopy equivalent.
\enm
\end{proposition}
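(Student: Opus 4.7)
The plan for parts (1)--(3) is routine. Part (1) is immediate: $M^k(X)=\partial N^{k+1}(X)$ is a closed $k$-manifold with boundary orientation, and the thickening $N^{k+1}(X)$ exists and is well-defined by Theorem~\ref{thm:thickenings-exist} since $k+1\ge 2n+1$ and $k+1\ge 6$. Part (2) follows from Proposition~\ref{prop:unique-thickening-simple-homotopic}: an orientation-preserving diffeomorphism $N^{k+1}(X)\cong N^{k+1}(Y)$ restricts to the boundary. For (3), Lemma~\ref{lem:thicken-sum-m} gives $N^{k+1}(X\vee Y)\cong N^{k+1}(X)\#_b N^{k+1}(Y)$, and passing to the boundary converts $\#_b$ into the ordinary connected sum.

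Part (4) is the substantial one, and my strategy is to build a canonical, highly connected retraction $r_X\colon M^k(X)\to X$ so that obstruction theory lets me recover $X$ from $M^k(X)$ up to homotopy. The inclusion $\iota\colon\partial N^{k+1}(X)\hookrightarrow N^{k+1}(X)$ is $\pi_1$-iso by the definition of a thickening. Since $N^{k+1}(X)$ has trivial tangent bundle (in particular is orientable) and is homotopy equivalent to the $n$-complex $X$, Poincar\'e--Lefschetz duality with local coefficients gives
\[ H_i(N^{k+1}(X),\partial N^{k+1}(X);\Z[\pi_1])\cong H^{k+1-i}(X;\Z[\pi_1])=0 \quad\text{for } i\le k-n.\]
Combined with the $\pi_1$-iso, an inductive application of the relative Hurewicz theorem on universal covers shows $\iota$ is $(k-n)$-connected; the hypothesis $k\ge 2n+1$ makes this at least $(n+1)$-connected. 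Composing with the deformation retraction $N^{k+1}(X)\simeq X$ produces the desired $(n+1)$-connected map $r_X\colon M^k(X)\to X$.

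Next I would apply standard obstruction theory: for any CW complex $Z$ with $\dim Z\le n$ and any map $h\colon Z\to X$, the obstructions to lifting $h$ along $r_X$ lie in $H^{i+1}(Z;\pi_i(F))$, with $F$ the homotopy fibre of $r_X$. These vanish because $\pi_i(F)=0$ for $i\le n$ by the connectivity of $r_X$, while $H^{i+1}(Z;-)=0$ for $i\ge n$ by the dimension of $Z$. Applied on $Z\times I$ relative to $Z\times\partial I$, the same reasoning shows any two lifts are homotopic. Taking $Z=X$ and $h=\id_X$ yields a section $s_X\colon X\to M^k(X)$, unique up to homotopy, with $r_X\circ s_X\simeq\id_X$; similarly I produce $r_Y,s_Y$.

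Given a homotopy equivalence $f\colon M^k(X)\to M^k(Y)$, I set $g=r_Y\circ f\circ s_X\colon X\to Y$ and $g'=r_X\circ f^{-1}\circ s_Y\colon Y\to X$. The key observation is that both $s_Y\circ r_Y\circ(f\circ s_X)$ and $f\circ s_X$ are lifts of $g\colon X\to Y$ along $r_Y$, and so are homotopic by uniqueness (using $\dim X\le n$). Therefore
\[ g'\circ g\,\simeq\, r_X\circ f^{-1}\circ f\circ s_X\,\simeq\, r_X\circ s_X\,\simeq\,\id_X,\]
and symmetrically $g\circ g'\simeq\id_Y$, so $X\simeq Y$. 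I expect the most delicate step will be the connectivity argument for $\iota$, which must handle local coefficients and the non-simply-connected Hurewicz theorem carefully; once that is in hand, the obstruction-theoretic part is essentially formal.
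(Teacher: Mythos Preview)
Your arguments for (1)--(3) match the paper's. For (4) the paper simply cites \cite[Proposition~II.1]{KS84} without further detail, whereas you supply a self-contained proof. Your approach---establishing that the inclusion $\partial N^{k+1}(X)\hookrightarrow N^{k+1}(X)$ is $(k-n)$-connected via Poincar\'e--Lefschetz duality with $\Z[\pi_1]$-coefficients and the relative Hurewicz theorem on universal covers, then using obstruction theory to recover $X$ from $M^k(X)$---is the standard line of reasoning and is correct. The connectivity computation is sound: the pair $(\widetilde N,\partial\widetilde N)$ is $1$-connected because $\pi_1(\partial N)\to\pi_1(N)$ is an isomorphism, and the vanishing $H^{k+1-i}(X;\Z[\pi_1])=0$ for $k+1-i>n$ follows from $\dim X\le n$. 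The obstruction-theoretic part is clean once you note that $r_{X*}\colon[Z,M^k(X)]\to[Z,X]$ is a bijection for $\dim Z\le n$ (which is exactly what $(n+1)$-connectivity of $r_X$ provides); your ``key observation'' then reduces to this injectivity. In short, you have written out what the cited reference does, and your proof is more informative than the paper's one-line citation.
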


\begin{proof}
The first statement follows immediately from the definitions, the second from Proposition~\ref{prop:unique-thickening-simple-homotopic} and the third is a straightforward consequence of  Lemma~\ref{lem:thicken-sum-m}.
%The fourth statement can be deduced from Lemma~\ref{lem:highly-connected-map}. 
The fourth statement is proved in \cite[Proposition~II.1]{KS84}.
\end{proof}

Let $n\in \N$. Furthermore let $k\in \N$ with  $k\geq 2n$ and $k\geq 5$. 
By Proposition~\ref{prop:thicken-homotopy} we obtain a well-defined map
\[ M^k \colon \{ \text{finite $n$-complexes}\}/\simeq_s \,\, \to\,\, \M{\Diff}{k}\]
where $\simeq_s$ denotes simple homotopy equivalence.

We conclude this section with the following corollary, which we will make use of in the proofs of 
Theorems \ref{thm:no-unique-prime-decomposition-high-dimension},
~\ref{thm:no-unique-prime-decomposition-dimension-4} and \ref{thm:simply-connected-case} respectively.

\begin{corollary}\label{thm:need-for-nonsimplyconnectedcase}
Let $n\in \N$. Furthermore let $k\in \N$ with  $k\geq 2n$ and $k\geq 5$.  Suppose that $X$ and $Y$ are finite complexes of dimension $\leq n$. 
We suppose that  $X \not \simeq Y$ and $X \vee S^n \simeq_s Y \vee S^n$. Then $M^k(X) \not \simeq M^k(Y)$, but there is an orientation preserving diffeomorphism between $M^k(X) \# (S^n \times S^{k-n})$ and $M^k(Y) \# (S^n \times S^{k-n})$.
%\[ M^k(X) \# (S^n \times S^{k-n}) \cong_{\op{Diff}} M^k(Y) \# (S^n \times S^{k-n}).\] 
\end{corollary}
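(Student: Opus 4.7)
The plan is to deduce both conclusions directly from the formal properties of the thickening construction $M^k$, specifically Proposition~\ref{prop:thicken-sum} and Lemma~\ref{lem:thicken-sphere}. There is essentially no geometric content beyond what is already packaged in those results; the corollary is a direct consequence of them.

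For the diffeomorphism statement, I would start from the simple homotopy equivalence $X\vee S^n \simeq_s Y\vee S^n$ and apply Proposition~\ref{prop:thicken-sum}(2) to produce an orientation-preserving diffeomorphism $M^k(X\vee S^n)\cong M^k(Y\vee S^n)$. Then I would invoke Proposition~\ref{prop:thicken-sum}(3) on each side to rewrite this as
\[
M^k(X)\,\#\,M^k(S^n)\;\cong\;M^k(Y)\,\#\,M^k(S^n),
\]
and finally use Lemma~\ref{lem:thicken-sphere} to identify $M^k(S^n)=S^n\times S^{k-n}$ (this is where the bound $k\geq 2n$ is used in an essential way). This produces the desired diffeomorphism $M^k(X)\,\#\,(S^n\times S^{k-n})\cong M^k(Y)\,\#\,(S^n\times S^{k-n})$.

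For the non-homotopy-equivalence statement I would argue by contradiction: if $M^k(X)\simeq M^k(Y)$, then Proposition~\ref{prop:thicken-homotopy}(4) would force $X\simeq Y$, contradicting the hypothesis $X\not\simeq Y$. The main subtle point is the bookkeeping with dimension bounds: Proposition~\ref{prop:thicken-homotopy}(4) is stated for $k\geq 2n+1$, while the corollary's hypothesis is only $k\geq 2n$. In the applications this corollary is actually used for (two-dimensional complexes with $k\geq 5$ in Section~\ref{section:proof-non-simply-connected-case}, and eight-dimensional complexes with $k\geq 17$ in Section~\ref{section:proofs-simply-connected-case}), the sharper inequality $k\geq 2n+1$ is always comfortably satisfied, so the argument goes through without modification. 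No step is genuinely difficult; the work lies entirely in verifying that the existence of pairs $(X,Y)$ satisfying the hypotheses — which is the real content of the later sections.
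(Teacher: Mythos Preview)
Your proposal is correct and follows exactly the paper's approach: the paper's proof is the single sentence ``This corollary follows immediately from the four statements of Proposition~\ref{prop:thicken-homotopy},'' and you have simply unpacked what that sentence means, invoking parts (2), (3), (4) of Proposition~\ref{prop:thicken-homotopy} together with Lemma~\ref{lem:thicken-sphere}. You have in fact been more careful than the paper by flagging the discrepancy between the hypothesis $k\geq 2n$ in the corollary and the hypothesis $k\geq 2n+1$ required for Proposition~\ref{prop:thicken-homotopy}(4); the paper does not comment on this, but as you observe, all actual applications satisfy the stronger bound.
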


\begin{proof}
This corollary follows immediately from the four statements of  Proposition~\ref{prop:thicken-homotopy}.
\end{proof}

%=================================================
\subsection{5-dimensional  thickenings}

Now let $X$ be a  finite $2$-complex. By Theorem~\ref{thm:thickenings-exist} there exists a $5$-thickening of $X$.
We can no longer conclude from Theorem~\ref{prop:unique-thickening} that the thickening is well-defined up to diffeomorphism. 
But in fact the following weaker statement holds:

\begin{proposition}\label{prop:unique-thickening-dim-4}
Let $X$ be a finite $2$-complex. 
If $(M,\phi)$ and $(N,\psi)$ are 5-dimensional thickenings for $X$, then $\partial M$ and $\partial N$ are $s$-cobordant.
\end{proposition}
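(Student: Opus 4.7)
The plan is to reduce to the uniqueness of thickenings in dimension $6$ and then extract the $s$-cobordism from the resulting diffeomorphism of $6$-manifolds.

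First I would check that $M \times I$ and $N \times I$ are both $6$-dimensional thickenings of $X$. Their tangent bundles are trivial as products. The boundary of $M \times I$, after smoothing corners, is the double $D(M) = M \cup_{\partial M} M$; by the Seifert--van Kampen theorem applied to this pushout, and using the defining hypothesis $\pi_1(\partial M) \xrightarrow{\cong} \pi_1(M)$, one finds $\pi_1(D(M)) \cong \pi_1(M)$, so the boundary inclusion is a $\pi_1$-isomorphism. The composition $X \xrightarrow{\phi} M \hookrightarrow M \times I$ is a simple homotopy equivalence, and analogously for $N \times I$. Since $6 \geq 2\dim(X) + 1 = 5$ and $6 \geq 6$, Theorem~\ref{prop:unique-thickening} yields an orientation-preserving diffeomorphism $F\colon M \times I \to N \times I$ compatible with the simple homotopy equivalences from $X$; restricting $F$ to the boundary gives a diffeomorphism $f\colon D(M) \to D(N)$ of closed oriented $5$-manifolds.

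From $f$ I would construct the $s$-cobordism. Viewing $M$ as the hemisphere $M \times \{0\} \subset D(M)$, the diffeomorphism $f$ yields a codimension-zero embedding $\iota\colon M \hookrightarrow D(N)$. After an ambient isotopy inside $D(N)$ arranging $\iota(M)$ to be disjoint from one hemisphere $N_0$, so that $\iota(M) \subset N_1$, the complement $V := N_1 \setminus \iota(\operatorname{int} M)$ is a compact $5$-manifold with $\partial V = \iota(\partial M) \sqcup \partial N \cong \partial M \sqcup \partial N$. The inclusion $\iota(M) \hookrightarrow N_1$ factors up to homotopy as the composition of the simple homotopy equivalences $M \simeq X \simeq N_1$ (this is where the compatibility of $F$ with the thickening data enters crucially), so it is itself a simple homotopy equivalence. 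A Mayer--Vietoris plus Poincar\'e--Lefschetz duality argument, applied to the decomposition $N_1 = \iota(M) \cup V$ glued along $\iota(\partial M)$, then shows that both boundary inclusions $\iota(\partial M), \partial N \hookrightarrow V$ are simple homotopy equivalences with vanishing Whitehead torsion, making $V$ the desired $s$-cobordism.

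The main obstacle will be arranging by ambient isotopy that $\iota(M)$ lies inside a single hemisphere of $D(N)$: since $\iota(M)$ and the hemispheres $N_0, N_1$ are all codimension-zero submanifolds of $D(N)$, this is not automatic and may require care (possibly modifying $F$ before restricting, or using a handle-trading argument supported near $\partial N$). An alternative route that avoids this issue is handle-theoretic: two $5$-thickenings of a $2$-complex differ only in the framings of their $2$-handles, which lie in $\pi_1(\mathrm{SO}(3)) = \mathbb{Z}/2$. Changing a single $2$-handle framing alters the boundary by a surgery whose trace one exhibits as a $5$-dimensional cobordism, and one then verifies directly that its Whitehead torsion vanishes by tracking the cells coming from the simple homotopy structure on $X$. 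Concatenating these cobordisms over all framing changes again produces the $s$-cobordism.
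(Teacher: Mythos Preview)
The paper itself gives no argument for this proposition: it simply records that the statement is implicit in \cite{Wa66} and refers to \cite[p.~15]{KS84}. So there is no detailed proof to compare against, only the literature your strategy is meant to reconstruct. Your overall plan---stabilise by crossing with $I$ so that Theorem~\ref{prop:unique-thickening} applies in dimension~$6$, and then extract an $s$-cobordism from the resulting diffeomorphism---is indeed the route taken in those references.

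Where your write-up has a genuine gap is the extraction step. You want to ambiently isotope the codimension-$0$ submanifold $\iota(M)=f(M\times\{0\})$ off the hemisphere $N_0$ inside the closed $5$-manifold $D(N)$. But $\iota(M)$ and $N_0$ are each ``half'' of $D(N)$: their complements are again copies of $M$ and $N$, so neither can be shrunk by general position, and their separating boundary $4$-manifolds $\iota(\partial M)$ and $\partial N$ meet generically in a $3$-manifold with no Whitney-type mechanism available in ambient dimension~$5$ to remove the intersections. You flag this yourself, but it is not a technicality that can be bypassed by ``modifying $F$ before restricting''; arranging $\iota(M)\subset \operatorname{int}(N_1)$ is tantamount to producing a codimension-$0$ embedding $M\hookrightarrow \operatorname{int}(N)$ realising the given simple homotopy equivalence, which is essentially the content of the proposition and makes the detour through $D(N)$ superfluous.

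Your handle-theoretic alternative is closer to what Wall actually does, but it is also not yet a proof as written. Two points need care. First, changing a single $2$-handle framing alters $w_2$ of the handlebody, so it may leave the class of thickenings (which are required to be parallelisable); one has to work with pairs of framing changes lying in the kernel of $(\Z/2)^{\#\,2\text{-cells}}\to H^2(X;\Z/2)$. Second, the ``trace'' you invoke is a $6$-dimensional relative cobordism between the two $5$-thickenings, not a $5$-dimensional one, and one then reads off the $s$-cobordism between $\partial M$ and $\partial N$ as the lateral boundary of that $6$-dimensional trace; the vanishing of the Whitehead torsion has to be checked on that piece. Filling in these two points is exactly how the references complete the argument.
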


This statement is implicit in \cite{Wa66}, see also \cite[p.~15]{KS84}. 
The same way that we deduced
Proposition~\ref{prop:unique-thickening-simple-homotopic} from 
Theorem~\ref{prop:unique-thickening} we can also deduce the following proposition
from Proposition~\ref{prop:unique-thickening-dim-4}.

\begin{proposition}\label{prop:unique-thickening-simple-homotopic-dim-4}
Let $X$ and $Y$ be finite $2$-complexes.
If $X$ and $Y$ are simple homotopy equivalent, then given any 5-dimensional thickenings $A$ of $X$ and $B$ of $Y$ the boundaries $\partial A$ and $\partial B$ are $s$-cobordant.
\end{proposition}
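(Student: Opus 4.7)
The plan is to mimic exactly the argument used to derive Proposition~\ref{prop:unique-thickening-simple-homotopic} from Theorem~\ref{prop:unique-thickening}, but invoking Proposition~\ref{prop:unique-thickening-dim-4} as the crucial black box since we are in the borderline dimension where uniqueness of thickenings up to diffeomorphism may fail.

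More concretely, let $f\colon X\to Y$ be a simple homotopy equivalence, and let $(A,\phi)$ and $(B,\psi)$ be 5-dimensional thickenings of $X$ and $Y$ respectively. The key observation is that $(B,\psi\circ f)$ is again a 5-dimensional thickening of $X$: the manifold $B$ itself already has all the required manifold-theoretic properties (compact oriented smooth $5$-manifold with trivial tangent bundle, with $\pi_1(\partial B)\to\pi_1(B)$ an isomorphism), and $\psi\circ f$ is a simple homotopy equivalence $X\to B$ because simple homotopy equivalences are closed under composition. Thus $(A,\phi)$ and $(B,\psi\circ f)$ are both 5-thickenings of the single complex $X$.

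Applying Proposition~\ref{prop:unique-thickening-dim-4} to these two 5-thickenings of $X$ now immediately yields that $\partial A$ and $\partial B$ are $s$-cobordant, which is what we wanted.

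The main conceptual point, and the only place something could go wrong, is to verify that composing with a simple homotopy equivalence preserves the property of being a thickening; there is no dimensional obstruction to this step, and no further invariants of the thickening must be tracked because Proposition~\ref{prop:unique-thickening-dim-4} already absorbs all of the subtlety peculiar to dimension five. Hence the real work has been done earlier, and the present proposition is a formal consequence.
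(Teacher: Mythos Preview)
Your proof is correct and follows exactly the approach indicated in the paper, which simply says that the proposition is deduced from Proposition~\ref{prop:unique-thickening-dim-4} in the same way that Proposition~\ref{prop:unique-thickening-simple-homotopic} was deduced from Theorem~\ref{prop:unique-thickening}. You have spelled out precisely that argument: transport the thickening $(B,\psi)$ of $Y$ to a thickening $(B,\psi\circ f)$ of $X$ via the simple homotopy equivalence $f$, and then apply Proposition~\ref{prop:unique-thickening-dim-4}.
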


%==========================================
\section{Finite $2$-complexes, group presentations and the D2-problem}\label{section:proof-non-simply-connected-case}
The goal of this section is to prove Theorem \ref{cor:not-cancellable} from the introduction and to give a survey of the various constructions that can be used to construct examples of non-cancellation in $\M{\Cat}{n}$. We will use:

\begin{lemma}\label{lem:not-cancellable}
Let $n\in \N$. Suppose there exist $n$-dimensional smooth manifolds $M$ and $N$ which are not homotopy equivalent but such that there is $r\geq 1$, an $n$-dimensional smooth manifold $W$ and an orientation preserving diffeomorphism between $M\# r\cdot W$ and $N\# r\cdot W$.
%there exist smooth $n$-manifolds $M_1,M_2$ which are not homotopy equivalent and that there exists a smooth $n$-manifold $W$ with $M_1\# r\cdot W \cong_\Diff M_2\# r\cdot W$ preserving orientations for some $r\geq 1$. 
Then 
for every $\Cat=\Top$, $\PL$ and $\Diff$  the following two statements hold:
\bnm[font=\normalfont]
\item The element $W$ is not cancellable in $\Mh{\Cat}{n}$.
\item The monoid $\M{\Cat}{n}$ is not a unique factorisation monoid.
\enm
\end{lemma}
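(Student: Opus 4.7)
The plan is to separate the classes $[M]$ and $[N]$ in $\Mh{\Cat}{n}$ using the hypothesis that $M$ and $N$ are not homotopy equivalent, then deduce (1) by iterated cancellation and (2) from Remark~(2) of Section~1. First I would observe that the hypothesized orientation-preserving diffeomorphism $M\# r\cdot W \cong N\# r\cdot W$ yields an equality in $\M{\Cat}{n}$ for each of $\Cat=\Top,\PL,\Diff$ simultaneously, since every smooth manifold carries a canonical PL and topological structure and a diffeomorphism is \emph{a fortiori} a PL-isomorphism and a homeomorphism. Hence $[M]\cdot [W]^r=[N]\cdot [W]^r$ in each $\Mh{\Cat}{n}$.

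The key step will be to establish $[M]\neq [N]$ in $\Mh{\Cat}{n}$. If instead $[M]=[N]$, then by definition of $\Mh{\Cat}{n}=\M{\Cat}{n}/\M{\Cat}{n}^*$ there exists a unit $u\in \M{\Cat}{n}^*$ with $N\cong M\# u$ in $\Cat$. By Proposition~\ref{cor:units}(1), $u$ is represented by a homotopy sphere $\Sigma$, and I would then invoke the standard fact that $M\#\Sigma\simeq M$: removing an open $n$-ball from $\Sigma$ yields a simply connected space whose reduced homology vanishes (via the long exact sequence of the pair $(\Sigma,\Sigma\setminus B^n)$ combined with excision), hence a contractible space by Whitehead's theorem, so the connected sum is homotopy equivalent to $M\cup_{S^{n-1}} D^n=M$. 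This would force $M\simeq N$, contradicting the hypothesis.

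Combining the previous two paragraphs, cancellability of $W$ in $\Mh{\Cat}{n}$ would allow us to cancel $r$ times in $[M]\cdot [W]^r=[N]\cdot [W]^r$ to conclude $[M]=[N]$, which we have just ruled out; this proves (1). For (2), Remark~(2) of Section~1 records that in a unique factorisation monoid $\MM$ every element of $\ol{\MM}$ is cancellable, so the failure of cancellability for $[W]$ immediately shows that $\M{\Cat}{n}$ is not a UFM. I expect the only nontrivial ingredient to be the homotopy equivalence $M\#\Sigma\simeq M$ for a homotopy sphere $\Sigma$; the remainder is just unwinding the definitions of units, of $\Mh{\Cat}{n}$, and of cancellability.
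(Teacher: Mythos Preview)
Your proof is correct and reaches the same conclusion as the paper, but the key step---showing that $[M]\neq [N]$ in $\Mh{\Cat}{n}$---is handled differently. The paper argues via $\Top$: since $M\not\simeq N$ implies $M\not\cong_{\Top} N$, one has $[M]\neq[N]$ in $\M{\Top}{n}$, and then invokes Proposition~\ref{cor:units}(3) (the topological Poincar\'e conjecture) to identify $\M{\Top}{n}=\Mh{\Top}{n}$, finally pushing the inequality back to $\Mh{\Cat}{n}$ along the forgetful map. You instead work directly in $\Mh{\Cat}{n}$: assuming $[M]=[N]$ there, you use only Proposition~\ref{cor:units}(1) (units are homotopy spheres, via the complexity argument) together with the elementary fact $M\#\Sigma\simeq M$ to derive the contradiction $M\simeq N$. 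Your route is arguably more economical, since it avoids the Poincar\'e conjecture entirely; the paper's route has the virtue of making the reduction to $\Top$ explicit. Two minor points: your ``$M\cup_{S^{n-1}}D^n$'' should read ``$(M\setminus B^n)\cup_{S^{n-1}}D^n$'', and the claim that $\Sigma\setminus B^n$ is simply connected tacitly assumes $n\geq 3$ (the cases $n\leq 2$ being trivial).
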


\begin{proof}
By hypothesis we know that $[M_1]\ne [M_2]\in \M{\Top}{n}$.
By Proposition~\ref{cor:units} we know that $\M{\Top}{n}=\Mh{\Top}{n}$.
In particular $[M_1]\ne [M_2]\in \Mh{\Top}{n}$ and thus  $[M_1]\ne [M_2]\in \Mh{\Cat}{n}$.
Furthermore we know that $[M_1]+r\cdot [W]=[M_2]+r\cdot [W]\in \Mh{\Cat}{n}$. 
By induction we see that $[W]$ is not cancellable in $\Mh{\Cat}{n}$. This implies that 
$\Mh{\Cat}{n}$ is not isomorphic to some $\N^P$, i.e.\ $\Mh{\Cat}{n}$ is not a unique factorisation monoid, hence $\M{\Cat}{n}$ is not a unique factorisation monoid.
\end{proof}

We will exploit this lemma with the following result:

\begin{theorem}\label{thm:no-unique-prime-decomposition-high-dimension}
Let $n\in \N_{\geq 5}$. Then there exist $n$-dimensional smooth manifolds $M$ and $N$ which are not homotopy equivalent but such that there is an orientation preserving diffeomorphism between $M\# (S^2\times S^{n-2})$ and $N\# (S^2\times S^{n-2})$.
\end{theorem}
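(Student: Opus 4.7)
The plan is to invoke Corollary~\ref{thm:need-for-nonsimplyconnectedcase} with CW-complex dimension $2$ and ambient manifold dimension $k = n$; both hypotheses $k \geq 2 \cdot 2 = 4$ and $k \geq 5$ are satisfied once $n \geq 5$. It therefore suffices to exhibit a pair of finite $2$-complexes $X$ and $Y$ such that $X \not\simeq Y$ but $X \vee S^{2} \simeq_{s} Y \vee S^{2}$, where $\simeq_{s}$ denotes simple homotopy equivalence. Setting $M := M^{n}(X)$ and $N := M^{n}(Y)$ then produces the desired manifolds: $M \not\simeq N$ comes from clause (4) of Proposition~\ref{prop:thicken-sum}, while the orientation-preserving diffeomorphism $M \# (S^{2} \times S^{n-2}) \cong N \# (S^{2} \times S^{n-2})$ is obtained by combining clauses (2) and (3) of that proposition with Lemma~\ref{lem:thicken-sphere}.

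The construction of such a pair $(X,Y)$ is most naturally carried out in the language of group presentations. A finite presentation $\mathcal{P} = \langle g_{1}, \ldots, g_{s} \mid r_{1}, \ldots, r_{t}\rangle$ of a group $G$ has an associated presentation $2$-complex $X_{\mathcal{P}}$ with $\pi_{1}(X_{\mathcal{P}}) = G$; under this dictionary, wedging with $S^{2}$ corresponds to appending a trivial relator to $\mathcal{P}$, and simple homotopy equivalence of presentation complexes is controlled by the standard $Q^{**}$-transformations (Tietze moves together with stabilisation by trivial generator/relator pairs). The task thus reduces to exhibiting two presentations $\mathcal{P}$ and $\mathcal{Q}$ of a common group $G$ that become $Q^{**}$-equivalent after appending a single trivial relator to each, but are not already $Q^{**}$-equivalent. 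The existence of such pairs is classical and lies at the heart of the D2-problem and of non-cancellation phenomena for projective $\Z[G]$-modules: explicit examples arising from work of Metzler, Dunwoody, Sieradski and Lustig can be obtained from carefully chosen presentations of metacyclic groups or trefoil-type groups, with the obstruction to $Q^{**}$-equivalence detected by a non-trivial class in a suitable quotient of $\widetilde{K}_{0}(\Z[G])$ that is annihilated by one $S^{2}$-stabilisation.

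The entire difficulty is thus concentrated in this $2$-dimensional input. Verifying $X \not\simeq Y$ requires computing a homotopy invariant of the presentation complex that is not killed by the trivial relator; verifying $X \vee S^{2} \simeq_{s} Y \vee S^{2}$ requires either exhibiting an explicit finite sequence of $Q^{**}$-moves or proving that the relevant obstruction vanishes stably. Once such a pair is in place, the passage from the CW-complex statement to the manifold statement is mechanical via the thickening functor $M^{n}(-)$ developed in Section~\ref{section:thickenings}, and the conclusions of the theorem follow immediately.
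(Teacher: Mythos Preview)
Your overall strategy is correct and identical to the paper's: invoke Corollary~\ref{thm:need-for-nonsimplyconnectedcase} with CW-dimension $2$ and ambient dimension $k=n\ge 5$, reducing everything to finding finite $2$-complexes $X\not\simeq Y$ with $X\vee S^2\simeq_s Y\vee S^2$, and then set $M=M^n(X)$, $N=M^n(Y)$.

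The gap is that you never actually produce such a pair; you only assert that ``explicit examples \ldots\ can be obtained'' and then describe, in rather loose terms, what one would have to verify. This is precisely the step where the content lies, and your description of it is somewhat off-target. The obstruction you invoke---a class in a quotient of $\widetilde K_0(\Z[G])$ killed by one stabilisation, with examples over metacyclic or trefoil-type groups---is not the mechanism the paper (or the literature you cite by name) actually uses here. The paper proceeds concretely: it takes Metzler's presentations $\PP_q$ of the elementary abelian group $(\Z/p)^s$ (Theorem~\ref{thm:Metzler}), which are distinguished up to homotopy by the \emph{bias invariant}, not by a projective-module obstruction; and it obtains $X_{\PP_q}\vee S^2\simeq_s X_{\PP_{q'}}\vee S^2$ from the general cancellation theorem of Browning and Hambleton--Kreck (Theorem~\ref{thm:finite-complexes-are-a-fork}), which says that any two finite $2$-complexes with the same \emph{finite} fundamental group and the same Euler characteristic become simply homotopy equivalent after a single $S^2$-wedge. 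The equality of Euler characteristics is immediate from Lemma~\ref{lem:euler-char} since the presentations have equal deficiency.

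The $\widetilde K_0$/D2 viewpoint you sketch is closer in spirit to the alternative $Q_{28}$ route the paper discusses in Section~5.3, but even there one needs explicit presentations (Proposition~\ref{thm:q28}) rather than an abstract existence claim. As written, your argument would only become a proof once you either supply the Metzler presentations and invoke Theorems~\ref{thm:finite-complexes-are-a-fork} and~\ref{thm:Metzler}, or give a precise citation for a specific pair $(\PP,\mathcal Q)$ together with the two verifications you list in your final paragraph.
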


A slightly weaker result is also available in dimension $4$: 

\begin{theorem}\label{thm:no-unique-prime-decomposition-dimension-4}
There exist 4-dimensional smooth manifolds $M$ and $N$ which are not homotopy equivalent but such that there is an orientation preserving diffeomorphism between $M\# r\cdot (S^2\times S^{2})$ and $N\# r\cdot (S^2\times S^{2})$ for some $r\geq 1$. 
\end{theorem}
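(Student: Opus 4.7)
The plan is to re-use the pair of finite 2-complexes $X, Y$ produced for the proof of Theorem~\ref{thm:no-unique-prime-decomposition-high-dimension}, which satisfy $X \not\simeq Y$ but $X \vee S^2 \simeq_s Y \vee S^2$, and to build the desired 4-manifolds as boundaries of their 5-dimensional thickenings. The difference from the higher-dimensional case treated in Theorem~\ref{thm:no-unique-prime-decomposition-high-dimension} is that 5-thickenings of 2-complexes are unique only up to $s$-cobordism of their boundary (Proposition~\ref{prop:unique-thickening-dim-4}), and smooth $s$-cobordism is strictly weaker than diffeomorphism in dimension 4; this is what forces the weaker conclusion allowing $r \geq 1$ rather than $r = 1$.

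Concretely, pick any 5-thickenings $W_X, W_Y$ of $X, Y$ (which exist by Theorem~\ref{thm:thickenings-exist}) and set $M := \partial W_X$ and $N := \partial W_Y$. By Lemmas~\ref{lem:thicken-sphere-n} and~\ref{lem:thicken-sum-m} (applicable since $k = 5 \geq 2 \cdot 2 + 1$), the boundary connected sum $W_X \#_b (S^2 \times D^3)$ is a 5-thickening of $X \vee S^2$ with boundary $M \# (S^2 \times S^2)$, and analogously for $Y$. Since $X \vee S^2 \simeq_s Y \vee S^2$, Proposition~\ref{prop:unique-thickening-simple-homotopic-dim-4} produces an $s$-cobordism between $M \# (S^2 \times S^2)$ and $N \# (S^2 \times S^2)$. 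To convert this into a diffeomorphism I will invoke the classical stable $s$-cobordism theorem in dimension 4: any smooth $s$-cobordism between closed 4-manifolds $M_0, M_1$ becomes trivial after stabilising both ends with sufficiently many copies of $S^2 \times S^2$, so that $M_0 \# r(S^2 \times S^2) \cong M_1 \# r(S^2 \times S^2)$ for some $r \geq 0$; this is proved via 4-dimensional handle calculus, where stabilisation provides the geometric room to realise the algebraic cancellations present in the $s$-cobordism. Applied to our setting this yields an orientation-preserving diffeomorphism $M \# r'(S^2 \times S^2) \cong N \# r'(S^2 \times S^2)$ for some $r' \geq 1$.

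The hard part will be certifying that $M$ and $N$ are not homotopy equivalent, since with $k = 4 < 2 \cdot 2 + 1$ the recovery statement Proposition~\ref{prop:thicken-homotopy}(4) is not available. The plan here is to exploit that the inclusion $M \hookrightarrow W_X \simeq X$ is 2-connected, so that Poincar\'e--Lefschetz duality applied to the pair $(W_X, M)$ expresses $\pi_2(M)$ as an extension of $\pi_2(X)$ by $H^2(\widetilde{X}; \Z) \cong \pi_2(X)^{\ast}$ as $\Z[\pi_1(X)]$-modules, and analogously for $N$. The same group-theoretic ingredients that produce the 2-complexes $X, Y$ in Section~\ref{section:proof-non-simply-connected-case}, in which $\pi_2(X)$ and $\pi_2(Y)$ are non-isomorphic $\Z[\pi_1]$-modules that become isomorphic only after adding a free summand, will ensure that the corresponding hyperbolic-type extensions $\pi_2(M), \pi_2(N)$ remain non-isomorphic as $\Z[\pi_1]$-modules. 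This will rule out any homotopy equivalence $M \simeq N$ and, combined with the stable diffeomorphism produced above, will complete the proof of the theorem.
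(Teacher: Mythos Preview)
Your construction of $M$ and $N$ and the argument producing a stable diffeomorphism after adding copies of $S^2 \times S^2$ matches the paper's proof exactly: take 5-thickenings, pass to boundaries, use Proposition~\ref{prop:unique-thickening-simple-homotopic-dim-4} to get an $s$-cobordism after one stabilisation, then invoke Wall's stable $s$-cobordism theorem \cite[Theorem~3]{Wa64} (cf.\ \cite{Qu83}) to trade the $s$-cobordism for a diffeomorphism after further stabilisation.

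The gap is in your proposed proof that $M \not\simeq N$. You assert that for the complexes $X = X_{\PP_q}$ and $Y = X_{\PP_{q'}}$ of Theorem~\ref{thm:Metzler} the modules $\pi_2(X)$ and $\pi_2(Y)$ are non-isomorphic over $\Z[\pi_1]$, and that this non-isomorphism survives passage to the hyperbolic-type extensions $\pi_2(M)$, $\pi_2(N)$. Neither claim is justified, and the first is in fact not how Metzler's examples are distinguished: for these presentation complexes over $(\Z/p)^s$ the invariant separating the homotopy types is the \emph{bias}, a secondary invariant strictly finer than the isomorphism type of $\pi_2$ as a $\Z[G]$-module (this is precisely why the bias was introduced in \cite{Me76} and why the complete classification in \cite{Si77,Bro79} needs it). So appealing to ``the same group-theoretic ingredients'' does not give what you want here. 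Even in situations where $\pi_2(X) \not\cong \pi_2(Y)$, the inference to $\pi_2(M) \not\cong \pi_2(N)$ would require a separate argument, since forming the extension by the dual can in principle identify previously distinct modules.

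The paper does not attempt a direct module-theoretic argument; instead it cites \cite[Theorem~III.3]{KS84} (see also \cite[Proposition~4.3]{HK93 II}), where the bias invariant is extended to the boundary 4-manifolds and shown to distinguish $M$ and $N$. If you want a self-contained treatment you would need to reproduce that extension of the bias, not a $\pi_2$-module computation.
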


Taken together these results immediately imply Theorem \ref{cor:not-cancellable} from the introduction.

%=====================================
\subsection{Proof of Theorem \ref{thm:no-unique-prime-decomposition-high-dimension}} \label{subsection:proof}
In this section we will provide the proof for Theorem \ref{thm:no-unique-prime-decomposition-high-dimension}.
The key idea for finding suitable manifolds is to use Corollary~\ref{thm:need-for-nonsimplyconnectedcase}. Thus our goal is to find finite 2-complexes $X$ and $Y$ which are not homotopy equivalent but such that $X\vee S^2$ and $Y\vee S^2$ are simple homotopy equivalent.

We begin our discussion with the following well-known construction of a finite $2$-complex $X_{\PP}$ with $\pi_1(X_{\PP})=G$ from a group presentation
\[ \PP = \langle x_1, \cdots, x_s \mid r_1, \cdots, r_t \rangle\]
of a finitely presented group $G$. This is known as the Cayley complex  $X_{\PP}$ of the presentation $\PP$ and has $1$-skeleton a wedge of $s$ circles, one circle for each generator $x_i$, with its $2$-cells attached along the paths given by each relation $r_i$ expressed as a word in the generators.

The following can be found in \cite{Bro79}, \cite[Theorem~B]{HK93 I}:

\begin{theorem} \label{thm:finite-complexes-are-a-fork}
If $X$ and $Y$ are finite $2$-complexes with $\pi_1(X)\cong \pi_1(Y)$ finite and $\chi(X)=\chi(Y)$, then $X \vee S^2 \simeq_s Y \vee S^2$.	
\end{theorem}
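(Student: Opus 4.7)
The strategy is to translate the problem into algebra over $\Z[G]$, apply Schanuel's lemma to obtain a stable isomorphism of $\pi_2$-modules, and then invoke a cancellation property that is special to finite groups.

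First I would replace $X$ and $Y$ by simple-homotopy equivalent Cayley $2$-complexes $X_\PP$ and $X_{\PP'}$ for finite presentations $\PP=\langle x_1,\dots,x_s\mid r_1,\dots,r_t\rangle$ and $\PP'=\langle y_1,\dots,y_{s'}\mid q_1,\dots,q_{t'}\rangle$ of $G$. The equality $\chi(X)=\chi(Y)$ becomes $s-t=s'-t'$, and by iteratively inserting trivial Tietze moves (adjoin a new generator together with a defining relator killing it, an elementary expansion and hence a simple homotopy equivalence) I can arrange $s=s'$ and $t=t'$. The cellular chain complex of the universal cover of $X_\PP$ is then a partial free $\Z[G]$-resolution
\[ 0\to \pi_2(X_\PP)\to \Z[G]^t\to \Z[G]^s\to \Z[G]\to \Z\to 0, \]
and the same holds for $X_{\PP'}$ with identical ranks.

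Schanuel's lemma applied to these two length-three partial resolutions of $\Z$ yields a stable isomorphism $\pi_2(X_\PP)\oplus \Z[G]^a\cong \pi_2(X_{\PP'})\oplus \Z[G]^a$ for some $a\geq 0$. Since wedging a $2$-complex with $S^2$ adds exactly one free $\Z[G]$-summand to $\pi_2$, the remaining task is to reduce $a$ to $1$. This is the content of Browning's cancellation theorem (resting on Swan's analysis of $\Z[G]$-projectives for finite $G$): the Euler characteristic controls $\pi_2$ up to stabilization by a single free module, yielding $\pi_2(X_\PP\vee S^2)\cong \pi_2(X_{\PP'}\vee S^2)$ as $\Z[G]$-modules.

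Finally, I would realize this algebraic isomorphism geometrically by constructing a cellular map $f\colon X_\PP\vee S^2\to X_{\PP'}\vee S^2$ that is the identity on $1$-skeleta (using the matched presentations of $G$) and realizes the chosen $\pi_2$-isomorphism on the $2$-cells. Since the complexes are $2$-dimensional, the classification of $2$-complexes by $\pi_1$ and $\pi_2$ (with automatically matching $k$-invariant in this construction) combined with Whitehead's theorem shows $f$ is a homotopy equivalence. To promote it to a simple homotopy equivalence, I would execute the algebraic isomorphism as a finite sequence of Andrews--Curtis-type moves on presentations (elementary expansions, elementary collapses, and basis changes in the $\pi_2$-presentation), each of which geometrically has vanishing Whitehead torsion. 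The main obstacle is the cancellation step reducing $a$ to one: this is where finiteness of $G$ enters essentially, via the bounded stable rank of $\Z[G]$ and the structure of the Swan group, and indeed the analogous stabilization statement is known to fail for general infinite groups.
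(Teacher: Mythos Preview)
The paper does not prove this theorem; it merely quotes it from Browning \cite{Bro79} and Hambleton--Kreck \cite[Theorem~B]{HK93 I}. Your outline is in the right spirit and broadly tracks the strategy of those references: reduce to presentation complexes, pass to the cellular chain complex of the universal cover as a partial free $\Z[G]$-resolution, use Schanuel to get a stable isomorphism of $\pi_2$-modules, and then invoke a cancellation theorem valid over $\Z[G]$ for $G$ finite to cut the number of stabilizations to one.

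Two places in your sketch are looser than they should be. First, the geometric realization step is not as simple as ``identity on $1$-skeleta and the chosen $\pi_2$-isomorphism on $2$-cells''; matching the presentations combinatorially does not by itself force the $k$-invariants to agree, and the actual argument goes through the equivalence between polarised homotopy types of $2$-complexes and chain-homotopy types of algebraic $2$-complexes (cf.\ the correspondence recorded later in the paper as \cite[Theorem~1.1]{Ni19}). Second, your closing claim that the equivalence can be realized by ``Andrews--Curtis-type moves on presentations'' is not the mechanism used, and as stated it brushes against the open Andrews--Curtis conjecture. In \cite{HK93 I} the simple-homotopy upgrade is obtained by a direct analysis of the Whitehead torsion of the constructed equivalence (using that the stabilized module has a free summand and that the relevant matrix over $\Z[G]$ can be reduced to the identity by elementary operations once one extra $\Z[G]$-summand is present), not by exhibiting a sequence of $Q^{**}$-moves between presentations. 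So the cancellation step you flag as the crux is indeed the crux, but the torsion argument deserves its own paragraph rather than being folded into presentation moves.
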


Recall that, if $\PP$ is a presentation of $G$ with $s$ generators and $t$ relations, then the deficiency $\Def(\PP)$ of $\PP$ is $s-t$.  The Euler characteristic of a presentation complex can be completely understood in terms of the deficiency:

\begin{lemma} \label{lem:euler-char}
If $\PP$ is a group presentation, then $\chi(X_\PP) = 1 - \Def(\PP)$.
\end{lemma}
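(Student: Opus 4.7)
The plan is to compute $\chi(X_\PP)$ directly from the cellular structure of the Cayley complex $X_\PP$ as recalled just before the lemma. Recall that the Euler characteristic of a finite CW-complex is the alternating sum of the number of cells in each dimension, and that this agrees with $\sum_i (-1)^i \dim_\Q H_i(-;\Q)$, so is a genuine invariant of the complex.

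By the construction of $X_\PP$ from the presentation $\PP = \langle x_1,\dots,x_s \mid r_1,\dots,r_t\rangle$, the complex $X_\PP$ has a single $0$-cell (the basepoint), exactly $s$ one-cells (one circle in the wedge $1$-skeleton for each generator $x_i$), and exactly $t$ two-cells (one attached along the word spelling the relator $r_j$ for each $j$). Thus the cell counts in dimensions $0,1,2$ are $1$, $s$, and $t$ respectively, and there are no cells of higher dimension.

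Therefore
\[
\chi(X_\PP) \,=\, 1 - s + t \,=\, 1 - (s-t) \,=\, 1 - \Def(\PP),
\]
as claimed. There is no substantive obstacle here; the lemma is a bookkeeping computation that simply unpacks the definitions of $X_\PP$ and of $\Def(\PP)$.
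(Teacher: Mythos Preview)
Your proof is correct; the paper does not even supply a proof of this lemma, treating it as immediate from the definitions, and your direct cell count is exactly the intended argument.
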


The task is therefore to find a finite group $G$ with presentations $\PP_1$ and $\PP_2$ such that $X_{\PP_1} \not \simeq X_{\PP_2}$ and $\Def(\PP_1)=\Def(\PP_2)$. That $X_{\PP_1} \vee S^2 \simeq_s X_{\PP_2} \vee S^2$ would then follow automatically from Theorem \ref{thm:finite-complexes-are-a-fork}.

The first examples of such presentations were found by Metzler in \cite{Me76} in the case $\pi_1(X)= (\Z/p)^s$ for $s \ge 3$ odd and $p \equiv 1 \mod 4$ prime. See \cite[p.~297]{HMS93} for a convenient reference.

\begin{theorem} \label{thm:Metzler}
For $s \ge 3$ odd, $p \equiv 1 \mod 4$ prime and $p \nmid  q$, consider presentations
\[ \PP_q = \langle x_1, \dots, x_s \mid x_i^p=1, [x_1^q,x_2]=1, [x_i,x_j]=1, 1 \le i < j \le s, (i,j) \ne (1,2) \rangle\]
for the group $(\Z /p)^s$. Then $X_{\PP_q} \not \simeq X_{\PP_{q'}}$ if $q(q')^{-1}$ is not a square mod $p$.
\end{theorem}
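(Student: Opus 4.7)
My plan is to distinguish the homotopy types of $X_{\PP_q}$ and $X_{\PP_{q'}}$ by extracting a module invariant from $\pi_2(X_{\PP_q})$, regarded as a module over $\Lambda = \Z[G]$, where $G = (\Z/p)^s$. All the $X_{\PP_q}$ share the same fundamental group and, by Lemma~\ref{lem:euler-char}, the same Euler characteristic (each $\PP_q$ has $s$ generators and $s + \binom{s}{2}$ relators), so by Theorem~\ref{thm:finite-complexes-are-a-fork} they become simple homotopy equivalent after a single stabilisation by $S^2$. In particular $\pi_2(X_{\PP_q})$ and $\pi_2(X_{\PP_{q'}})$ are stably isomorphic as $\Lambda$-modules, and the obstruction to an honest isomorphism---equivalent to $X_{\PP_q} \simeq X_{\PP_{q'}}$ via the chain-complex classification of finite $2$-complexes with prescribed $\pi_1$ and $\chi$---is what I aim to detect.

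First I would realise $\pi_2(X_{\PP_q})$ as $\ker d_2^{(q)}$, where $d_2^{(q)}$ is the Fox Jacobian of $\PP_q$ over $\Lambda$. The cellular chain complex of the universal cover takes the form
\[
0 \longrightarrow \pi_2(X_{\PP_q}) \longrightarrow \Lambda^{s + \binom{s}{2}} \xrightarrow{\,d_2^{(q)}\,} \Lambda^s \longrightarrow \Lambda \longrightarrow \Z \longrightarrow 0,
\]
and every row of $d_2^{(q)}$ except the one for $[x_1^q, x_2]$ is independent of $q$. A direct Fox calculation in $\Lambda$ shows that the $[x_1^q, x_2]$-row has entry $(1 - x_2)(1 + x_1 + \cdots + x_1^{q-1})$ in column $1$, entry $x_1^q - 1$ in column $2$, and zero elsewhere; equivalently, it equals the $[x_1, x_2]$-row left-multiplied by the element $[q]_{x_1} := 1 + x_1 + \cdots + x_1^{q-1} \in \Lambda$, which has augmentation $q$ and is therefore a non-unit whenever $|q| > 1$.

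Next I would package $q$ into a $\Lambda$-module invariant of $\pi_2(X_{\PP_q})$. The natural route is through the one-dimensional characters of $G$: for a primitive $p$-th root of unity $\zeta$ and each character $\chi \colon G \to \Z[\zeta]^\times$ with $\chi(x_1) \neq 1$, the base change $d_2^{(q)} \otimes_\Lambda \Z[\zeta]$ has the $[x_1^q, x_2]$-row proportional to the corresponding row of $d_2^{(1)} \otimes \Z[\zeta]$ by the cyclotomic unit $(\chi(x_1)^q - 1)/(\chi(x_1) - 1) \in \Z[\zeta]^\times$. Assembling these factors over a Galois orbit of characters produces an element of an appropriate subquotient of $K_1(\Lambda)$, and a Legendre-symbol reduction, sending $(\zeta^q - 1)/(\zeta - 1)$ to the image of $q$ modulo squares in $\F_p^\times$, extracts a class in $\F_p^\times / (\F_p^\times)^2$ that depends only on the isomorphism class of $\pi_2(X_{\PP_q})$.

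The final and hardest step is to verify that this class is genuinely nontrivial when $q(q')^{-1}$ is a non-square, and that it cannot be absorbed by an element of $\op{Aut}_\Lambda(\Lambda^{s + \binom{s}{2}}) \times \op{Aut}_\Lambda(\Lambda^s)$ acting on $d_2^{(q)}$. Both hypotheses enter here: oddness of $s$ blocks a symmetrisation among the $\binom{s}{2}$ commutator rows that would otherwise absorb the $q$-contribution, while $p \equiv 1 \pmod 4$, equivalently $-1 \in (\F_p^\times)^2$, ensures that the residual ambiguity from passing to Galois-conjugate characters does not conflate squares with non-squares. In \cite{Me76} this computation is carried out by an explicit elementary-matrix normalisation of the Fox matrix over $\Lambda$, effectively identifying the invariant with a generalised Reidemeister torsion of the pair $(C_*(X_{\PP_q}), C_*(X_{\PP_{q'}}))$; this normalisation is the main technical obstacle in the plan.
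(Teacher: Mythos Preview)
The paper does not supply its own proof of this theorem: immediately after the statement it simply records that ``To prove these complexes are not homotopy equivalent, Metzler defined the bias invariant'' and cites \cite{Me76}, \cite{Si77}, \cite{Bro79}, and \cite{HMS93}. So there is no argument in the paper to compare against beyond the name of the invariant and the references.

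Your sketch is broadly in the same spirit as Metzler's original argument---indeed, what you are outlining (extract from the Fox Jacobian of $\PP_q$ a unit-valued invariant that, after passing through characters of $G$, reduces to the class of $q$ in $\F_p^\times/(\F_p^\times)^2$) is essentially a reconstruction of the bias invariant for these particular complexes, and you yourself defer to \cite{Me76} for the final normalisation step. Your Fox derivative computation is correct, as is the observation that the $[x_1^q,x_2]$-row is $[q]_{x_1}$ times the $[x_1,x_2]$-row, and you correctly flag where the hypotheses $s$ odd and $p\equiv 1 \pmod 4$ enter. One point of imprecision: you write that the obstruction to $X_{\PP_q}\simeq X_{\PP_{q'}}$ is the obstruction to an isomorphism of $\pi_2$-modules, but what actually classifies these $2$-complexes is the chain homotopy type of the full algebraic $2$-complex $C_*(\widetilde{X_{\PP_q}})$ (cf.\ Theorem~\ref{thm:D2=alg}), not $\pi_2$ alone as an abstract $\Lambda$-module; the bias is a chain-level invariant rather than a module invariant in the naive sense. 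This does not break your plan, since your actual computation works with $d_2^{(q)}$ and not just its kernel, but the phrasing should be adjusted.
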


\begin{remark}
The smallest case for which this is satisfied is the case $p=5$, $s=3$, $q=1$ and $q'=2$, corresponding to the group $(\Z / 5)^3$. 
\end{remark}

To prove these complexes are not homotopy equivalent, Metzler defined the bias invariant. This is a homotopy invariant defined for all finite 2-complexes and which was later shown in \cite{Si77}, \cite{Bro79} to be a complete invariant for finite 2-complexes with finite abelian fundamental group which led to a full (simple) homotopy classification in these cases. 

We are now ready to prove Theorem~\ref{thm:no-unique-prime-decomposition-high-dimension}.

\begin{proof}[Proof of Theorem~\ref{thm:no-unique-prime-decomposition-high-dimension}]
Let $s \ge 3$ be odd, $p \equiv 1 \mod 4$ be prime, and choose $q, q' \ge 1$ such that $p \nmid q$, $p\nmid  q'$ and such that  $q(q')^{-1}$ is not a square mod $p$. Let $\PP_q$ and $\PP_q'$ be the presentations for $(\Z / p )^s$ constructed above. Then $X_{\PP_q} \not \simeq X_{\PP_q'}$ by Theorem \ref{thm:Metzler}.

Since $\Def(\PP_q)=\Def(\PP_q')$, we have $\chi(X_{\PP_q})=\chi(X_{\PP_q'})$ by Lemma~\ref{lem:euler-char} and so 
\[ X_{\PP_q} \vee S^2 \simeq_s X_{\PP_q'} \vee S^2\] 
are simply homotopy equivalent, by Theorem \ref{thm:finite-complexes-are-a-fork}.
Since $n\geq 5$ this fulfills the conditions of Corollary \ref{thm:need-for-nonsimplyconnectedcase}. Thus we see that  $M = M^n(X_{\PP_q})$ and $N = M^n(X_{\PP_q'})$ have the desired properties.
\end{proof}

%=====================================
\subsection{Proof of Theorem \ref{thm:no-unique-prime-decomposition-dimension-4}}

\begingroup
\renewcommand\thetheorem{\ref{thm:no-unique-prime-decomposition-dimension-4}}
\begin{theorem}
There exist 4-dimensional smooth manifolds $M$ and $N$ which are not homotopy equivalent but such that there is an orientation preserving diffeomorphism between $M\# r\cdot (S^2\times S^{2})$ and $N\# r\cdot (S^2\times S^{2})$ for some $r\geq 1$. 
\end{theorem}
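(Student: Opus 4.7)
The strategy is to mimic the proof of Theorem \ref{thm:no-unique-prime-decomposition-high-dimension}, but since we are now forced to work with $5$-dimensional thickenings of $2$-complexes, the uniqueness result in Proposition \ref{prop:unique-thickening-simple-homotopic-dim-4} only gives $s$-cobordant boundaries. We will bridge this gap using the classical stabilization theorem for smooth $4$-manifolds. Fix a prime $p\equiv 1\bmod 4$, an odd integer $s\geq 3$, and integers $q,q'$ coprime to $p$ with $q(q')^{-1}$ not a square mod $p$. Let $X=X_{\PP_q}$ and $Y=X_{\PP_{q'}}$ be the $2$-complexes from Theorem \ref{thm:Metzler}, so $X\not\simeq Y$ but $\chi(X)=\chi(Y)$ by Lemma \ref{lem:euler-char}, whence $X\vee S^2\simeq_s Y\vee S^2$ by Theorem \ref{thm:finite-complexes-are-a-fork}.

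Choose any $5$-dimensional thickenings $A$ of $X$ and $B$ of $Y$, whose existence is guaranteed by Theorem \ref{thm:thickenings-exist}, and set $M:=\partial A$ and $N:=\partial B$. Since $S^2\times D^3$ is a $5$-thickening of $S^2$, the boundary connected sums $A\#_b (S^2\times D^3)$ and $B\#_b (S^2\times D^3)$ are $5$-thickenings of $X\vee S^2$ and $Y\vee S^2$ respectively. Applying Proposition \ref{prop:unique-thickening-simple-homotopic-dim-4} to the simple homotopy equivalence $X\vee S^2\simeq_s Y\vee S^2$ shows that the $4$-manifolds
\[
M\#(S^2\times S^2)\quad\text{and}\quad N\#(S^2\times S^2)
\]
are $s$-cobordant. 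Now invoke the stabilization theorem for smooth $4$-manifolds of Wall (in the simply connected case) and its extension via Kreck's modified surgery (for arbitrary fundamental groups): any $s$-cobordism of closed, oriented, smooth $4$-manifolds becomes a product after surgery along finitely many embedded $2$-spheres, hence the two boundary components become orientation preservingly diffeomorphic after connected sum with sufficiently many copies of $S^2\times S^2$. This produces an $r\geq 1$ and an orientation preserving diffeomorphism $M\# r(S^2\times S^2)\cong N\# r(S^2\times S^2)$.

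The main obstacle is then to verify that $M$ and $N$ are not homotopy equivalent. First observe that both $M$ and $N$ are spin: since $A$ has trivial tangent bundle by definition of a thickening and the normal bundle of $\partial A\subset A$ is trivial, $M$ is parallelizable, and likewise for $N$. Moreover $\pi_1(M)\cong\pi_1(A)\cong\pi_1(X)=(\Z/p)^s\cong \pi_1(N)$. The homotopy invariants that remain to be compared are packaged in the $\Z G$-chain complex of the universal cover, where $G=(\Z/p)^s$. Using Poincar\'e--Lefschetz duality for the pair $(\widetilde A,\widetilde M)$ (noting $\widetilde A\simeq\widetilde X$ is $2$-dimensional), one obtains a short exact sequence of $\Z G$-modules
\[
0\longrightarrow \hom_{\Z}(\pi_2(X),\Z)\longrightarrow \pi_2(M)\longrightarrow \pi_2(X)\longrightarrow 0,
\]
and in fact the entire equivariant chain complex $C_*(\widetilde M)$ is determined, up to $\Z G$-chain homotopy, by that of $\widetilde X$ together with its duality pairing. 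The plan is to show, via either Metzler's bias invariant or the homotopy classification of closed spin $4$-manifolds with finite fundamental group in the style of Hambleton--Kreck, that the equivariant intersection form of $M$ retains enough information to recover the bias of the presentation $\PP_q$ up to squares mod $p$. Since $q(q')^{-1}$ is a non-square by hypothesis, this yields $M\not\simeq N$ and completes the argument.

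The hard part, as just indicated, is the last step: passing from the non-homotopy-equivalence of the $2$-dimensional spines to the non-homotopy-equivalence of the $4$-manifold boundaries. All other ingredients, namely the Metzler presentations, Wall's thickening theory, and the stabilization of smooth $s$-cobordisms by $S^2\times S^2$, are established tools; the delicate point is to show that the homotopy invariants of the $4$-manifold see the bias of the underlying presentation.
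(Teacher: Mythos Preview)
Your approach is essentially identical to the paper's: both use the Metzler presentations, take $5$-dimensional thickenings, apply Proposition~\ref{prop:unique-thickening-simple-homotopic-dim-4} to obtain an $s$-cobordism between $M\#(S^2\times S^2)$ and $N\#(S^2\times S^2)$, and then invoke Wall's stabilisation theorem \cite[Theorem~3]{Wa64} (cf.\ \cite{Qu83}) to get a diffeomorphism after further stabilisation. For the final step---that $M$ and $N$ are not homotopy equivalent---the paper simply cites \cite[Theorem~III.3]{KS84} (see also \cite[Proposition~4.3]{HK93 II}), which carries out exactly the bias-based analysis you outline as your ``plan''.
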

\addtocounter{theorem}{-1}
\endgroup

\begin{proof}
We use the same notation as in the proof of Theorem~\ref{thm:no-unique-prime-decomposition-high-dimension}.
By Theorem~\ref{thm:thickenings-exist} there exist 5-dimensional thickenings $A$ for $X_{\PP_{q}}$ and $B$ for $X_{\PP_q'}$.  We write $M=\partial A$ and $N=\partial B$. 

As in Lemma~\ref{lem:thicken-sum-m} we see that $A\#_b (S^2\times \ol{B}^3)$ is a thickening of $X_{\PP_q}\vee S^2$ and we see that $B\#_b (S^2\times \ol{B}^3)$ is a thickening of $X_{\PP_q'}\vee S^2$. As in the proof of Theorem~\ref{thm:no-unique-prime-decomposition-high-dimension} we note that 
 $X_{\PP_q}\vee S^2$ is  simple homotopy equivalent to  $X_{\PP_q'}\vee S^2$.
Thus we obtain from Proposition~\ref{prop:unique-thickening-simple-homotopic-dim-4} that $M\# (S^2\times S^2)$ and $N\# (S^2\times S^2)$ are $s$-cobordant.  It follows from
Wall \cite[Theorem~3]{Wa64} (see also \cite[p.~149]{Sc05} and \cite[Theorem~1.1]{Qu83}) that these manifolds are diffeomorphic (via an orientation presentation diffeomorphism) after stabilisation by sufficiently many copies of $S^2 \times S^2$, i.e.
\[ M\# \us{=(r+1)\cdot (S^2\times S^2)}{\ub{ (S^2\times S^2) \# r(S^2 \times S^2)}} \,\,\cong_{\op{Diff}}\,\, N \#  \us{=(r+1)\cdot (S^2\times S^2)}{\ub{(S^2\times S^2) \# r(S^2 \times S^2) }}\]
for some $r \ge 0$.

We still need to show that $M$ and $N$ are not homotopy equivalent. 
Since we are now dealing with the case $k=4$ we cannot appeal to Proposition \ref{prop:thicken-homotopy}. 
But it is shown in \cite[Theorem~III.3]{KS84} (see also \cite[Proposition~4.3]{HK93 II}) that $M$ and $N$ are indeed not homotopy equivalent.
\end{proof}

%===================================
\subsection{The D2 problem}
We will now discuss a link to the work of C.\ T.\ C.\ Wall on the structure of finite complexes as it places the examples above into the framework of a more general conjecture.

Wall asked \cite{Wa65} whether or not a D$n$ complex, i.e.\ a finite complex $X$ such that $H_{i}(X;M)=0$ and $H^i(X;M)=0$ for all $i \ge n+1$ and all finitely generated left $\Z[\pi_1(X)]$-modules $M$, is necessarily homotopy equivalent to a finite $n$-complex.
This was shown to be true in the case $n >2$ \cite[Corollary~5.1]{Wa66b} and in the case $n=1$ \cite{St68}, \cite{Sw69}. The case $n=2$ remains a major open problem and is known as Wall's D2-problem. 

\begin{d2}
	Let $X$ be a D2 complex. Is $X$ homotopy equivalent to a finite 2-complex?
\end{d2}

We say that a group $G$ has the \emph{D2-property} if the D2-problem is true for all D2 complexes $X$ with $\pi_1(X)=G$.
This is relevant to the present discussion due to the following equivalent formulation. 

Define an \emph{algebraic $2$-complex} $E=(F_*,\partial_*)$ over $\Z[ G]$ to be a chain complex consisting of an exact sequence
\[
\begin{tikzcd}
	F_2 \ar[r,"\partial_2"] & F_1 \ar[r,"\partial_1"] & F_0 \ar[r, "\partial_0"] & \Z \ar[r] & 0
\end{tikzcd}
\]
where $\Z$ is the $\Z [G]$-module with trivial $G$ action and 
where the $F_i$ are stably free $\Z [G]$-modules, i.e.\ $F_i \oplus \Z [G]^r \cong \Z[ G]^s$ for some $r,s \ge 0$.

For example, if $X$ is a finite 2-complex with a choice of polarisation $\pi_1(X) \cong G$, the chain complex $C_*(\widetilde{X})$ of the universal cover is a chain complex over $\Z [G]$ under the deck transformation action of $G$ on $\widetilde{X}$. Since the action is free, $C_i(\widetilde{X})$ is free for all $i \ge 0$ and so $C_*(\widetilde{X})$ is an algebraic $2$-complex over $\Z [G]$. We say an algebraic $2$-complex over $\Z [G]$ is \textit{geometrically realisable} if it is chain homotopy equivalent to $C_*(\widetilde{X})$ for some finite 2-complex $X$.

The following correspondence is established in \cite[Theorem 1.1]{Ni19}:

\begin{theorem}\label{thm:D2=alg}
If $G$ is a finitely presented group, then there is a one-to-one correspondence between polarised D2 complexes $X$ with $\pi_1(X) \cong G$ up to polarised homotopy and algebraic $2$-complexes over $\Z [G]$ up to chain homotopy given by $X \mapsto C_*(\widetilde{X})$.
\end{theorem}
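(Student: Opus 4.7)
The plan is to show that the assignment $\Phi\colon X\mapsto C_*(\widetilde X)$ descends to a bijection on equivalence classes. Well-definedness is routine: any polarised homotopy equivalence $X\simeq Y$ lifts to a $G$-equivariant homotopy equivalence of universal covers, giving a chain homotopy equivalence of cellular chain complexes over $\Z[G]$. Throughout I fix a finite presentation $\PP$ of $G$ with presentation complex $X_\PP$, whose chain complex $C_\PP:=C_*(\widetilde{X_\PP})$ serves as a distinguished algebraic 2-complex against which to compare.

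For injectivity, given polarised D2 complexes $X,Y$ and a chain homotopy equivalence $\varphi\colon C_*(\widetilde X)\to C_*(\widetilde Y)$ over $\Z[G]$, my strategy is to realise $\varphi$ by a polarised homotopy equivalence $f\colon X\to Y$ via obstruction theory. The map is built cell-by-cell: the 1-skeleton map is dictated by the polarisation and $\varphi_1$; extension across a 2-cell $e$ is possible because the identity $\partial_2^Y\varphi_2(e)=\varphi_1\partial_2^X(e)$ ensures the relevant attaching loop is null-homotopic in $\widetilde Y$, and among all such extensions one can be chosen to realise $\varphi_2$ on chains by adjusting by elements of $\pi_2(Y)=\ker(\partial_2^Y)$; extensions across $n$-cells for $n\geq 3$ are unobstructed because the successive obstructions lie in cohomology groups $H^n(X;\pi_{n-1}(Y))$ with $\Z[G]$-module coefficients, all of which vanish by the D2 hypothesis on $X$. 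The resulting $f$ induces $\varphi$ on chains, hence is a homology isomorphism with arbitrary local coefficients; the equivariant Whitehead theorem, applied to $\widetilde f$ using that $\widetilde X$ and $\widetilde Y$ are simply connected CW complexes with vanishing higher homology (again by D2), then promotes $f$ to a polarised homotopy equivalence.

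For surjectivity, let $E=(F_*,\partial_*)$ be an algebraic 2-complex over $\Z[G]$. Both $E$ and $C_\PP$ are partial resolutions of the trivial module $\Z$ by stably free $\Z[G]$-modules, so an iterated application of Schanuel's lemma yields contractible chain complexes $T,T'$ (each a finite sum of elementary pieces $\Z[G]\xrightarrow{\mathrm{id}}\Z[G]$ in adjacent degrees) together with an isomorphism
\[
E\oplus T\;\cong\;C_\PP\oplus T'.
\]
Each elementary summand of $T'$ admits a geometric realisation as a deformation of $X_\PP$: a piece in degrees $(1,2)$ corresponds to adjoining a cancelling pair of 1- and 2-cells, and a piece in degrees $(2,3)$ corresponds to wedging an $S^2$ and killing it by a 3-cell. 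Performing the corresponding operations on $X_\PP$ produces a finite CW complex $X$ of dimension at most $3$ with $C_*(\widetilde X)\cong C_\PP\oplus T'\cong E\oplus T$, which is chain homotopy equivalent to $E$ because $T$ is contractible. The D2 property of $X$ is inherited from $X_\PP$ since only contractible chain-level summands have been added, and the polarisation is preserved because each elementary geometric move is a $3$-deformation that does not alter $\pi_1$.

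The main obstacle I anticipate is the surjectivity step, specifically verifying that the Schanuel isomorphism in the truncated chain-complex setting can be implemented by explicit geometric stabilisations while controlling the polarisation, and checking that different choices of stable isomorphism produce polarised homotopy equivalent realisations. This last uniqueness statement is precisely what the injectivity argument delivers, so the two halves of the proof interlock: injectivity provides the well-definedness of the inverse constructed by surjectivity, thereby assembling the claimed bijection.
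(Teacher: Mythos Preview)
The paper does not prove this statement; it is cited from \cite[Theorem~1.1]{Ni19}, so there is no in-paper argument to compare yours against.

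Your injectivity sketch via obstruction theory and the Whitehead theorem is essentially sound. The surjectivity argument, however, has a genuine gap. You claim that iterated Schanuel yields an isomorphism of \emph{chain complexes} $E\oplus T\cong C_\PP\oplus T'$ with $T,T'$ contractible. But contractible summands contribute zero Euler characteristic, so any such isomorphism would force $\chi(E)=\chi(C_\PP)$; this already fails for $E=C_\PP\oplus\Z[G][2]$, the chain complex of $X_\PP\vee S^2$. More structurally, the space $X$ you build is obtained from $X_\PP$ by elementary expansions and is therefore simple-homotopy equivalent to $X_\PP$, so your construction can only ever hit the single chain-homotopy class $[C_\PP]$, whereas the set of algebraic $2$-complexes up to chain homotopy generally has many elements (distinguished already by $\chi$). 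What Schanuel actually delivers is a stable isomorphism of the second syzygies $H_2(E)$ and $H_2(C_\PP)$, not of the complexes themselves. The realisation argument in the literature (see \cite{Jo03,Ma09,Ni19}) instead wedges copies of $S^2$ onto $X_\PP$ and then attaches $3$-cells along carefully chosen elements of $\pi_2$ so that the resulting $C_*(\widetilde X)$ is chain equivalent to $E$; producing the correct Yoneda extension class in $\mathrm{Ext}^3_{\Z[G]}(\Z,H_2(E))$ is the substantive step, and Schanuel's lemma alone does not supply it.
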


In particular, $G$ has the D2-property if and only if every algebraic $2$-complex over $\Z[G]$ is geometrically realisable, as was already shown in \cite{Jo03} and \cite{Ma09}. 

One can thus search for further examples of finite 2-complexes $X$ and $Y$ for which $X \not \simeq Y$ and $X \vee S^2 \simeq_s Y \vee S^2$ by studying the chain homotopy types of algebraic $2$-complexes over $\Z[G]$ for $G$ having the D2-property. 

A class of groups $G$ for which it is feasible to classify algebraic $2$-complexes over $\Z [G]$ up to chain homotopy are those with \textit{$n$-periodic cohomology}, i.e.\ for which the Tate cohomology groups satisfy $\widehat{H}^i(G;\Z) = \widehat{H}^{i+n}(G;\Z)$ for all $i \in \Z$. Let $m_{\Hr}(G)$ denote the number of copies of $\Hr$ in the Wedderburn decomposition of $\R [G]$ for a finite group $G$, i.e.\ the number of one-dimensional quaternionic representations. The following is a consequence of combining Theorem \ref{thm:finite-complexes-are-a-fork} with a special case of \cite[Theorem A]{Ni20}, which is proven as an application of a recent cancellation result for projective $\Z[G]$ modules \cite{Ni18}.

\begin{theorem} \label{thm:D2}
If $G$ has 4-periodic cohomology and $m_{\Hr}(G) \ge 3$. If $G$ has the D2 property, then there exists finite 2-complexes $X$ and $Y$ with $\pi_1(X) \cong \pi_1(Y) \cong G$ for which $X \not \simeq Y$ and $X \vee S^2 \simeq_s Y \vee S^2$. 
%In particular, $X \vee S^2 \simeq_s Y \vee S^2$, so for all $k \ge 5$,  $M^k(X) \not \simeq M^k(Y)$ and 
%\[ M^k(X) \# (S^2 \times S^{k-2}) \cong M^k(Y) \# (S^2 \times S^{k-2}).\]
\end{theorem}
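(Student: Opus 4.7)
The plan is to construct two algebraic $2$-complexes over $\Z[G]$ that are not chain homotopy equivalent but become so after one elementary stabilisation, realise them geometrically using the D2 property, and then invoke Theorem~\ref{thm:finite-complexes-are-a-fork}.

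First, I would use the $4$-periodicity of the cohomology of $G$ to produce algebraic $2$-complexes of the form $E = (0 \to J \to F_1 \to F_0 \to \Z \to 0)$, where $F_0, F_1$ are finitely generated free $\Z[G]$-modules and the top syzygy $J$ is automatically stably free (periodicity, via an Euler-characteristic count on the periodic resolution, forces this). Varying $J$ within its stable isomorphism class produces a family of such algebraic $2$-complexes with the same formal invariants. Next, I would invoke the cancellation-failure results from \cite{Ni18,Ni20} --- for which the hypothesis $m_{\Hr}(G) \geq 3$ is precisely what is needed --- to obtain two stably free, non-free $\Z[G]$-modules $J_1, J_2$ with $J_1 \not\cong J_2$ but $J_1 \oplus \Z[G] \cong J_2 \oplus \Z[G]$. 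The resulting algebraic $2$-complexes $E_1, E_2$ are inequivalent up to chain homotopy (distinguished by the isomorphism class of the top module), while $E_i \oplus (0 \to \Z[G] \xrightarrow{\op{id}} \Z[G] \to 0)$ are chain homotopy equivalent.

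Since $G$ has the D2 property, Theorem~\ref{thm:D2=alg} produces polarised finite $2$-complexes $X, Y$ with $\pi_1 = G$ realising $E_1, E_2$. The chain-homotopy inequivalence of $E_1$ and $E_2$ gives $X \not\simeq Y$, provided $J_1$ and $J_2$ represent distinct orbits under the induced action of $\op{Aut}(G)$ on the stably free class set; this can be arranged by choosing them from a sufficiently rich family, which is possible because the hypothesis $m_{\Hr}(G) \geq 3$ produces many stably free classes. The elementary stabilisation of algebraic $2$-complexes corresponds geometrically to wedging with $S^2$ under the correspondence of Theorem~\ref{thm:D2=alg}, so $X \vee S^2$ and $Y \vee S^2$ have chain homotopy equivalent cellular chain complexes on their universal covers. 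Since $\chi(X) = \chi(Y)$ (the total $\Z[G]$-ranks of $E_1, E_2$ agree) and $G$ is finite, Theorem~\ref{thm:finite-complexes-are-a-fork} then yields $X \vee S^2 \simeq_s Y \vee S^2$.

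The main obstacle is producing the two stably free modules $J_1, J_2$ with the required cancellation behaviour; this is the technical content of the Nicholson cancellation theorems and rests on Swan's classification of projective modules over integral group rings together with the structure of the Wedderburn decomposition of $\R[G]$. A secondary but non-trivial point is ensuring that the algebraic distinction between $E_1$ and $E_2$ survives the $\op{Aut}(G)$ ambiguity between polarised and unpolarised homotopy equivalence, so that $X$ and $Y$ are genuinely non-homotopy-equivalent as unpolarised spaces.
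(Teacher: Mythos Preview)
Your overall strategy---produce two algebraic $2$-complexes over $\Z[G]$ that are distinguished by a module-theoretic invariant, become chain homotopy equivalent after one stabilisation, realise them geometrically via the D2 property, and finish with Theorem~\ref{thm:finite-complexes-are-a-fork}---is precisely the route the paper takes; the paper simply packages the algebraic step as a citation of \cite[Theorem~A]{Ni20} together with \cite{Ni18}.

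The specific implementation you wrote down, however, does not work. An algebraic $2$-complex $F_2 \to F_1 \to F_0 \to \Z \to 0$ is \emph{not} exact at $F_2$: the invariant is $\pi_2 = \ker(\partial_2)$, and for finite $G$ this is never zero (the Euler-characteristic count on $\widetilde{X}$ gives $\op{rank}_\Z \pi_2 = |G|\chi(X) - 1 \geq |G|-1$). Your exact sequence $0 \to J \to F_1 \to F_0 \to \Z \to 0$ therefore cannot be $C_*(\widetilde{X})$ for any finite $2$-complex with $\pi_1 = G$, so the D2 property cannot be applied to it. Relatedly, $J = \Omega_2(\Z)$ is not stably free for $4$-periodic groups (for $Q_8$ its $\Z$-rank is $9$, not a multiple of $8$); periodicity yields $\Omega_4(\Z) \sim \Z$, which says nothing about stable freeness of $\Omega_2$. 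The module that one actually varies is $\pi_2$, lying in the stable class $[\Omega_3(\Z)]$, and it is for representatives of \emph{this} class that the cancellation results of \cite{Ni18,Ni20} apply under the hypothesis $m_\Hr(G) \geq 3$. Finally, your stabilisation $E \oplus (\Z[G] \xrightarrow{\id} \Z[G])$ adds a contractible summand and so is already a chain homotopy equivalence; the operation corresponding to $X \mapsto X \vee S^2$ adds $\Z[G]$ to $F_2$ with \emph{zero} boundary map, which sends $\pi_2$ to $\pi_2 \oplus \Z[G]$. With these corrections---shift the syzygy index by one and track $\pi_2$ rather than $J$---the argument runs exactly as you describe.
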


\begin{remark}
More generally, \cite[Theorem A]{Ni20} gives non-cancellation examples for finite $n$-complexes for all even $n > 2$ without any assumption on the D2 property.	
\end{remark}

Examples of groups with $4$-periodic cohomology and $m_{\Hr}(G) \ge 3$ include the generalised quaternion groups
\[ Q_{4n} = \langle x, y \mid x^n=y^2,yxy^{-1}=x^{-1}\rangle\]
for $n \ge 6$ and the groups $Q(2^na;b,c)$ which appear in Milnor's list \cite{Mi57} for $n =3$ or $n \ge 5$, and $a, b, c$ odd coprime with $c \ne 1$ \cite[Theorem 5.10]{Ni19}.

It was shown in \cite[Theorem 7.7]{Ni19} that $Q_{28}$ has the D2-property (contrary to a previous conjecture \cite{BW05}), and so $Q_{28}$ gives an example where the hypotheses of Theorem \ref{thm:D2} hold. In fact, the examples predicted by \cref{thm:D2} were determined explicitly in \cite{MP19}:

\begin{proposition}\label{thm:q28}
Consider the following presentations for $Q_{28}$: 
\[ \PP_1= \langle x,y \mid x^7=y^2, yxy^{-1}=x^{-1} \rangle, \quad \PP_2= \langle x,y\mid x^7=y^2, y^{-1}xyx^2=x^3y^{-1}x^2y \rangle.\]
	Then $X_{\PP_1} \not \simeq X_{\PP_2}$ and $X_{\PP_1} \vee S^2 \simeq_s X_{\PP_2} \vee S^2$. 
%In particular, $X_{\PP_1} \vee S^2 \simeq_s X_{\PP_2} \vee S^2$, so for all $k \ge 5$, $M^k(X_{\PP_1}) \not \simeq M^k(X_{\PP_2})$ and 
%\[ M^k(X_{\PP_1}) \# (S^2 \times S^{k-2}) \cong M^k(X_{\PP_2}) \# (S^2 \times S^{k-2}).\]
\end{proposition}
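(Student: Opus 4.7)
The proof splits into two independent tasks: the stable simple homotopy equivalence $X_{\PP_1} \vee S^2 \simeq_s X_{\PP_2} \vee S^2$, and the non-equivalence $X_{\PP_1} \not\simeq X_{\PP_2}$.

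For the stable equivalence I would proceed as follows. Both presentations have two generators and two relators, so $\Def(\PP_1) = \Def(\PP_2) = 0$ and hence $\chi(X_{\PP_1}) = \chi(X_{\PP_2}) = 1$ by Lemma~\ref{lem:euler-char}. A direct manipulation in the free group, using the common relation $x^7 = y^2$, shows that the second relator of $\PP_2$ forces $yxy^{-1} = x^{-1}$, so $\pi_1(X_{\PP_2}) \cong Q_{28}$. Since $Q_{28}$ is finite and the two Euler characteristics agree, Theorem~\ref{thm:finite-complexes-are-a-fork} yields the required simple homotopy equivalence after wedging with $S^2$.

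The non-equivalence is the essential content. My plan is to translate it into a question about algebraic $2$-complexes via Theorem~\ref{thm:D2=alg}: the functor $X \mapsto C_*(\widetilde X)$ sends polarised homotopy equivalent D2-complexes to chain homotopy equivalent algebraic $2$-complexes over $\Z[Q_{28}]$. It therefore suffices to show that $C_*(\widetilde{X}_{\PP_1})$ and $C_*(\widetilde{X}_{\PP_2})$ are not chain homotopy equivalent, and then to rule out the finitely many identifications arising from the action of $\mathrm{Out}(Q_{28})$ on polarisations so as to pass from polarised to unpolarised non-equivalence. The fact that $Q_{28}$ has the D2 property, established in \cite[Theorem~7.7]{Ni19}, enters as insurance that the algebraic classification corresponds exactly to the geometric one and that no exotic D2-complex can interfere.

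The heart of the argument, and the main obstacle, is producing an invariant separating the two algebraic $2$-complexes. Both have the form
\[ \Z[Q_{28}]^2 \xrightarrow{\partial_2^{(i)}} \Z[Q_{28}]^2 \xrightarrow{\partial_1} \Z[Q_{28}] \longrightarrow \Z \longrightarrow 0, \]
where $\partial_2^{(i)}$ is the Fox Jacobian of the relators of $\PP_i$ and $\partial_1$ is determined by the (common) generators. Since $Q_{28}$ has $4$-periodic cohomology, the chain homotopy type of such a complex is controlled by the isomorphism class of its second syzygy, a stably free $\Z[Q_{28}]$-module $J_i$ cut out by $\partial_2^{(i)}$. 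By Swan's work on the projective class group of the integral quaternion group ring, $\Z[Q_{28}]$ admits stably free non-free modules, detected by a Swan-type invariant taking values in a subquotient of $(\Z/28)^\times$. Following \cite{MP19}, I would compute the Swan invariants of $J_1$ and $J_2$ directly from the two Fox Jacobians and check that they fall into distinct classes; this forces $J_1 \not\cong J_2$ and hence $X_{\PP_1} \not\simeq X_{\PP_2}$.
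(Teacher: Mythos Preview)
The paper itself does not prove this proposition; it simply records the result from \cite{MP19}. So there is no in-paper argument to compare against, and the question is whether your sketch is a faithful outline of what that reference does.

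Your treatment of the stable equivalence is correct and is exactly the argument the surrounding text sets up: equal deficiency gives equal Euler characteristic by Lemma~\ref{lem:euler-char}, both complexes have finite fundamental group $Q_{28}$, and Theorem~\ref{thm:finite-complexes-are-a-fork} finishes. (That $\PP_2$ presents $Q_{28}$ is part of the hypothesis of the proposition; your verification is fine, though the manipulation takes place in the quotient group rather than the free group.)

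For the non-equivalence your broad strategy --- pass to algebraic $2$-complexes over $\Z[Q_{28}]$ via Theorem~\ref{thm:D2=alg}, distinguish them module-theoretically, then account for the $\mathrm{Out}(Q_{28})$-action on polarisations --- is indeed how \cite{MP19} proceeds. Two points deserve correction. First, the D2 property of $Q_{28}$ plays no role in this direction: Theorem~\ref{thm:D2=alg} already shows that polarised homotopy equivalent finite $2$-complexes yield chain homotopy equivalent algebraic $2$-complexes, and that is all you use. The D2 property concerns the converse (geometric realisability of every algebraic $2$-complex), which is irrelevant for proving \emph{in}equivalence of two complexes that are already geometric. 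Second, the module $J_i = \ker\partial_2^{(i)} = \pi_2(X_{\PP_i})$ is not itself stably free; it is a minimal representative of the stable syzygy $\Omega^3(\Z)$ over $\Z[Q_{28}]$. The link to the stably-free-non-free module of \cite{BW05} is that the latter controls non-cancellation among such minimal representatives, not that $\pi_2$ is stably free. The explicit separation of the two $\pi_2$'s in \cite{MP19} is carried out by direct matrix computations over $\Z[Q_{28}]$ and suitable quotient rings, rather than by evaluating a single tidy invariant in a subquotient of $(\Z/28)^\times$; your description there is schematic at best.
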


By Corollary \ref{thm:need-for-nonsimplyconnectedcase} this shows that, for $k \ge 5$, we have that $M^k(X_{\PP_1}) \not \simeq M^k(X_{\PP_2})$ and 
\[ M^k(X_{\PP_1}) \# (S^2 \times S^{k-2}) \cong M^k(X_{\PP_2}) \# (S^2 \times S^{k-2}).\]
This gives an alternate way to prove Theorem \ref{thm:no-unique-prime-decomposition-high-dimension}, as well as giving an example whose fundamental group is non-abelian.

We conclude this section by remarking that, whilst Theorem \ref{thm:D2} gives a reasonable place to look to find further non-cancellation examples, there is currently no known method to show that such examples exist without an explicit construction. Indeed, the presentations found in Proposition \ref{thm:q28} are used in the proof of \cite[Theorem 7.7]{Ni19}.

%==========================================
\section{Simply-connected complexes}\label{section:proofs-simply-connected-case}
Theorem \ref{thm:simply-connected-case} follows from the next theorem together with Lemma~\ref{lem:not-cancellable}. %, the analogue of Theorem \ref{thm:no-unique-prime-decomposition-high-dimension} in the simply connected case.

\begin{theorem}
	\label{thm:simply-connected-case-new}
Let $k \ge 17$. There exist simply connected $k$-dimensional smooth manifolds $M$ and $N$
which are not homotopy equivalent but such that there is an orientation preserving diffeomorphism between $M\# (S^{5}\times S^{k-5})
$ 
and $N
\# (S^{5}\times S^{k-5})$.
\end{theorem}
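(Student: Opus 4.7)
The approach mirrors that of Theorem~\ref{thm:no-unique-prime-decomposition-high-dimension}, replacing the $2$-dimensional CW-complexes with non-trivial fundamental group by simply-connected complexes of larger dimension. The plan is to exhibit a pair of simply-connected finite CW-complexes $X$ and $Y$ of dimension at most~$8$ which are not homotopy equivalent but satisfy $X \vee S^5 \simeq Y \vee S^5$, and then feed them through the thickening machinery of Section~\ref{section:thickenings}. Note that since $\mathrm{Wh}(1)=0$, a homotopy equivalence between simply-connected finite CW-complexes is automatically simple, so the distinction between $\simeq$ and $\simeq_s$ plays no role throughout.

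Once such $X$ and $Y$ are in hand, set $M=M^k(X)$ and $N=M^k(Y)$. Both are simply-connected closed smooth $k$-manifolds: the thickening $N^{k+1}(X)$ is simply-connected because it is homotopy equivalent to $X$, and by the definition of a thickening the boundary inclusion induces an isomorphism on $\pi_1$. For $k\ge 17=2\cdot 8+1$, Proposition~\ref{prop:thicken-homotopy}(4) guarantees that $M\not\simeq N$, while Proposition~\ref{prop:thicken-sum}(3), Lemma~\ref{lem:thicken-sphere} and Proposition~\ref{prop:thicken-homotopy}(2) together produce orientation-preserving diffeomorphisms
\[ M\#(S^5\times S^{k-5}) \,\,\cong\,\, M^k(X\vee S^5) \,\,\cong\,\, M^k(Y\vee S^5) \,\,\cong\,\, N\#(S^5\times S^{k-5}), \]
yielding precisely the conclusion of the theorem. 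The bound $k\ge 17$ is thus directly tied to the dimension of the smallest example of such a CW-pair $(X,Y)$ that one is able to produce; any simply-connected finite $n$-complexes $X,Y$ with $X\vee S^m\simeq Y\vee S^m$ and $X\not\simeq Y$, $m\le n$, would yield an analogous non-cancellation statement for $S^m\times S^{k-m}$ in the range $k\ge 2n+1$.

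The central obstacle is the construction of the pair $(X,Y)$ itself. In the non-simply-connected case, Metzler's inequivalent presentations of $(\Z/p)^s$, combined with Theorem~\ref{thm:finite-complexes-are-a-fork}, delivered stably-equivalent but distinct complexes essentially from Euler-characteristic bookkeeping. No such shortcut is available here: one must control both the inequivalence $X\not\simeq Y$ and the equivalence $X\vee S^5\simeq Y\vee S^5$ by direct homotopy-theoretic means. A natural candidate is to take $X=A\cup_f e^8$ and $Y=A\cup_{f'} e^8$, where $A$ is a wedge of low-dimensional spheres including at least one $S^5$ factor and where $f,f'\in\pi_7(A)$ are attaching classes which lie in distinct orbits for the action of the self-equivalence group of $A$ but in the same orbit for the self-equivalence group of the enlarged wedge $A\vee S^5$, the latter group being larger thanks to new Whitehead products coming from the extra $S^5$-summand. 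Identifying such $f,f'$ explicitly requires detailed calculations with Hilton's formula for $\pi_7(A)$ and with the action of the self-equivalence group, and it is this construction---classical and recalled from the literature by the authors---that constitutes the hard part of the proof.
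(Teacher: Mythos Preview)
Your reduction to a pair of simply-connected finite complexes $X\not\simeq Y$ with $X\vee S^m\simeq Y\vee S^m$, followed by the thickening bookkeeping from Proposition~\ref{prop:thicken-homotopy}, is exactly the paper's strategy, and that part of your argument is correct.

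The genuine gap is that you never actually construct $(X,Y)$. You outline an approach---take $A$ a wedge of spheres containing an $S^5$-summand, attach an $8$-cell along classes $f,f'\in\pi_7(A)$, and separate self-equivalence orbits over $A$ from those over $A\vee S^5$ using Hilton's splitting---but you do not name $A$, $f$, $f'$ or verify either orbit condition. As you yourself concede, this is the heart of the matter; without it there is no proof. Moreover, the construction the paper actually uses is not the one you sketch. The paper takes the \emph{single} sphere $S^4$ and applies a theorem of Hilton (Theorem~\ref{thm:hilton}): for suspended torsion classes $\alpha,\beta\in\pi_{m-1}(S^n)$ one has $C_\alpha\simeq C_\beta$ iff $\beta=\pm\alpha$, while $C_\alpha\vee S^m\simeq C_\beta\vee S^m$ whenever $\langle\alpha\rangle=\langle\beta\rangle$. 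Taking $\alpha$ a generator of $t(\pi_7(S^4))\cong\Z/12$ and $\beta=5\alpha$ immediately yields $C_\alpha\not\simeq C_\beta$ and $C_\alpha\vee S^8\simeq C_\beta\vee S^8$; no analysis of self-equivalences of a larger wedge is required. Note in particular that the sphere wedged on is $S^8$, not $S^5$: the ``$S^5$'' in the theorem statement (and the ``$n=4$'' in the last line of the paper's proof) are misprints for $S^8$ and $n=8$, so your effort to build an $S^5$-summand into $A$ was in pursuit of a typo.
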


\begin{remark} The bound $k \geq 17$ is an artifact of our method, and we expect similar examples 
%to exist in all dimensions $\geq 5$. 
to exist in a much lower range of dimensions.
The strict analogue of Theorem~\ref{thm:simply-connected-case-new} cannot, however, hold in dimension $4$: It follows from Donaldson's Theorem \cite[Theorem~A]{Do83}, the classification of indefinite intersection forms and Freedman's' Theorem \cite[Theorem~1.5]{Fr82} that  any two 4-dimensional  simply connected \emph{smooth}  manifolds that become diffeomorphic after the connected sum with $r\cdot (S^2\times S^2)$ where $r\geq 1$, are already homeomorphic.
\end{remark}

The key idea is once again to use Theorem~\ref{thm:need-for-nonsimplyconnectedcase}.
But this time we will use  mapping cones to produce useful CW-complexes. We introduce the following notation.

\begin{notation}
Let $\alpha\colon S^{m-1}\to S^n$ be a map. We denote its mapping cone by $C_\alpha$.
Note that $C_\alpha$ has a CW-structure with three cells, one in dimension 0, one in dimension $n$ and one in dimension $m$.
\end{notation}

The following theorem is a practical machine for constructing interesting CW-complexes, see \cite[Theorem 3.1 \& Corollary 3.3]{Hi67}. Note that \cite{Hi67,Mo73} contain many other examples of CW-complexes exhibiting similar phenomena.

\begin{theorem}\label{thm:hilton}
Let $m,n\in \N_{\geq 3}$ and let $[\alpha],[\beta]\in \pi_{m-1}(S^n)$ be elements of finite order. If $[\alpha]$ is in the image of the suspension homomorphism $\pi_{m-2}(S^{n-1})\to \pi_{m-1}(S^n)$, then the following two statements hold:
\bnm[font=\normalfont]
\item $C_\alpha\simeq C_\beta$ if and only if $[\beta]=\pm [\alpha]\in \pi_{m-1}(S^n)$.
\item If $[\alpha]$ and $[\beta]$ generate the same subgroup of $\pi_{m-1}(S^n)$, then $C_\alpha\vee S^m\simeq C_\beta \vee S^m$.
\enm
\end{theorem}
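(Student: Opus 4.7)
The plan is to leverage two standard facts. First, for $n \geq 2$ a degree $d$ self-map of $S^n$ induces multiplication by $d$ on $\pi_{m-1}(S^n)$ by post-composition; this governs the ``sign'' in part (1). Second, for $m \geq 3$ the group $\pi_{m-1}(S^n)$ is abelian, so handle slides and orientation reversals produce an actual $GL_2(\Z)$-action on pairs of attaching maps; this drives part (2).

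For part (1), the ``if'' direction is handled directly: if $[\beta] = [\alpha]$ the cones coincide, and if $[\beta] = -[\alpha]$ a degree $-1$ self-equivalence $r$ of $S^n$ satisfies $r \circ \alpha \simeq \beta$, so $(r, \op{id}_{D^m})$ extends to a homotopy equivalence $C_\alpha \to C_\beta$. For the converse, given a homotopy equivalence $f \colon C_\alpha \to C_\beta$ I would make it cellular and extract $g = f|_{S^n}$ together with the induced map $\bar{f} \colon S^m \to S^m$ on $C_\alpha/S^n$, both of degree $\pm 1$. Using that the suspension $\pi_{m-1}(S^{m-1}) \to \pi_m(S^m)$ is an isomorphism, I would choose $h \colon S^{m-1} \to S^{m-1}$ of the same degree as $\bar{f}$ with $\Sigma h \simeq \bar{f}$. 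Naturality of the Puppe cofibre sequence then yields $\Sigma(g \circ \alpha) \simeq \Sigma(\beta \circ h)$ in $\pi_m(S^{n+1})$, and the hypothesis that $[\alpha]$ is a suspension allows one to desuspend this to $g \circ \alpha \simeq \beta \circ h$ in $\pi_{m-1}(S^n)$. The two sides simplify to $\deg(g) \cdot [\alpha]$ and $\deg(h) \cdot [\beta]$ respectively, whence $[\beta] = \pm [\alpha]$.

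For part (2), write $[\beta] = q[\alpha]$ with $\gcd(q, N) = 1$, where $N$ is the order of $[\alpha]$. View $C_\alpha \vee S^m$ and $C_\beta \vee S^m$ as $S^n$ with two $m$-cells attached via data $(\alpha, 0)$ and $(\beta, 0)$ in $\pi_{m-1}(S^n) \oplus \pi_{m-1}(S^n)$, respectively. Elementary handle slides and orientation reversals of the $m$-cells realize a $GL_2(\Z)$-action on such attaching data through self-equivalences of the CW complex. Bezout provides $b, d \in \Z$ with $qd - bN = \pm 1$, and the matrix
\[ A = \begin{pmatrix} q & b \\ N & d \end{pmatrix} \in GL_2(\Z) \]
sends $(\alpha, 0)^T$ to $(q\alpha, N\alpha)^T = (\beta, 0)^T$, using $N[\alpha] = 0$. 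The corresponding self-equivalence yields $C_\alpha \vee S^m \simeq C_\beta \vee S^m$.

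The main obstacle will be the desuspension step in part (1). The Puppe-sequence naturality produces a relation between attaching maps only after one application of $\Sigma$; lifting it back to $\pi_{m-1}(S^n)$ is precisely where the suspension hypothesis on $[\alpha]$ is essential, since otherwise one can only conclude $[\beta] = \pm [\alpha]$ modulo $\ker(\Sigma \colon \pi_{m-1}(S^n) \to \pi_m(S^{n+1}))$. Making this desuspension rigorous, typically via the EHP sequence or a Hopf-invariant computation, forms the technical heart of Hilton's original proof.
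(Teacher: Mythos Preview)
For part (2) your $GL_2(\Z)$ handle-slide argument is correct, but the paper's proof is shorter: it forms the double mapping cone $C_{\alpha,\beta} := C_\alpha \cup_\beta D^m$ (viewing $\beta$ as the composite $S^{m-1} \to S^n \hookrightarrow C_\alpha$), notes that $C_{\alpha,\beta} \cong C_{\beta,\alpha}$ tautologically, and observes that $C_{\alpha,\beta} \simeq C_\alpha \vee S^m$ because $\beta \in \langle\alpha\rangle$ becomes null-homotopic in $C_\alpha$. No matrix bookkeeping is needed.

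For part (1) the desuspension step you flag is a genuine gap, and it is not clear how to close it along the lines you sketch: knowing that $[\alpha]$ lies in the image of $\Sigma$ gives you no control over whether the difference $g\circ\alpha - \beta\circ h$ lies in $\ker\Sigma$, since there is no suspension hypothesis on $\beta$. The paper avoids this problem entirely by working with homotopy \emph{fibres} rather than the cofibre Puppe sequence. Taking the homotopy fibres $F_\alpha, F_\beta$ of the inclusions $S^n \hookrightarrow C_\alpha$ and $S^n \hookrightarrow C_\beta$, the attaching maps lift canonically to maps $S^{m-1} \to F_\alpha$ and $S^{m-1} \to F_\beta$, which the relative Hurewicz theorem shows induce isomorphisms on $\pi_{m-1}$. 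The equivalence $f$ (together with its restriction $g$ to $S^n$) then induces an equivalence $\bar f\colon F_\alpha \to F_\beta$, and comparing via these $\pi_{m-1}$-isomorphisms produces a homotopy-commutative square
\[
\begin{array}{ccc}
S^{m-1} & \xrightarrow{\ \alpha\ } & S^n \\[1mm]
{\scriptstyle\pm 1}\big\downarrow & & \big\downarrow{\scriptstyle\pm 1} \\[1mm]
S^{m-1} & \xrightarrow{\ \beta\ } & S^n
\end{array}
\]
\emph{before} any suspension is applied. The suspension hypothesis on $\alpha$ enters only at the last line, to identify post-composition of $\alpha$ with a degree $-1$ map as $-[\alpha]$. (Incidentally, your opening assertion that post-composition by a degree $d$ self-map of $S^n$ is multiplication by $d$ on all of $\pi_{m-1}(S^n)$ is false in general --- e.g.\ a degree $d$ map on $S^2$ acts as $d^2$ on $\pi_3(S^2)$ --- and holds precisely on suspension classes; this is exactly why the hypothesis is there.)
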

\begin{proof}
We prove (1) first and may assume that $m\geq n+2$, else the statements become easy (if $m-1 =n$) or trivial (if $m-1 < n$). The ``if'' part is obvious. To see the ``only if'' we consider a homotopy equivalence $f\colon C_\alpha \simeq C_\beta$. Such a homotopy equivalence induces an isomorphism on $\pi_n$, so that we find that there is a homotopy commutative diagram
\[ \begin{tikzcd}
	S^n \ar[r] \ar[d,"\pm1"] & C_\alpha \ar[d,"f"] \\
	S^n \ar[r] & C_\beta   
\end{tikzcd}\]
in which the horizontal maps are the canonical maps. We denote by $F_\alpha$ and $F_\beta$ the homotopy fibres of these horizontal maps. Since the composites $S^{m-1} \to S^n \to C$ ($C=C_\alpha,C_\beta$) are null homotopic, we obtain a canonical homotopy commutative diagram
\[\begin{tikzcd}
	S^{m-1} \ar[r] \ar[d, dashed,"\pm 1"] & F_\alpha \ar[r] \ar[d,"\bar{f}"] & S^n \ar[r] \ar[d,"\pm 1"] & C_\alpha \ar[d,"f"] \\
	S^{m-1} \ar[r] & F_\beta \ar[r] & S^n \ar[r] & C_\beta
\end{tikzcd}\]
where $\bar{f}$ is the induced homotopy equivalence of homotopy fibres. The relative Hurewicz theorem for the maps $S^n \to C_\alpha$ and $S^n \to C_\beta$, together with $m\geq n+2$, implies that the two maps $S^{m-1} \to F_\alpha$ and $S^{m-1} \to F_\beta$ induce isomorphisms on $\pi_{m-1}$ so one obtains a dashed arrow making the diagrams commute up to homotopy.
We deduce that there is a homotopy commutative diagram
\[\begin{tikzcd}
	S^{m-1} \ar[r,"\alpha"] \ar[d,"\pm1"] & S^n \ar[d,"\pm1"] \\
	S^{m-1} \ar[r,"\beta"] & S^n
\end{tikzcd}\]
Now we use that $\alpha$ is a suspension, so that post composition of $\alpha$ with a degree $-1$ map is just $-\alpha$ in $\pi_{m-1}(S^n)$.

To see (2) we consider the space $C_{\alpha,\beta} = C_\alpha \cup_{\beta} D^m$. Here, we view $\beta$ as the composite $S^{m-1} \to S^n \to C_\alpha$. We note that $C_{\alpha,\beta} \cong C_{\beta,\alpha}$. By assumption $\beta$ is contained in the subgroup generated by $\alpha$. This implies the composite $S^{m-1} \to S^n \to C_\alpha$ is null homotopic so that $C_{\alpha,\beta} \simeq C_\alpha \vee S^m$. We thus obtain
\[ C_\alpha \vee S^m \simeq C_{\alpha,\beta} \simeq C_{\beta,\alpha} \simeq C_\beta \vee S^m \]
where the very last equivalence follows from the assumption that $\alpha$ is also contained in the subgroup generated by $\beta$.
\end{proof}

\begin{proof}[Proof of Theorem~\ref{thm:simply-connected-case-new}]
%Note that there exists an isomorphism 
%$\pi_8(S^5)\xrightarrow{\cong} \Z/{24}$. 
%It follows from the Freudenthal suspension theorem, see   \cite{Fr38}\cite[Theorem VII.7.13]{Whi78},
%that the suspension homomorphism  $\Sigma\colon \pi_7(S^4)\to \pi_8(S^5)$ is an epimorphism. 
%Let $k\geq 16$.  
It is well known that the $\pi_6(S^3) \cong \Z/12$
and that suspension homomorphism $\pi_6(S^3) \to \pi_7(S^4)$ is injective with image $t(\pi_7(S^4))$;
see \cite[Proposition 5.6, Lemma 13.5]{To62}.
Let $\mu$ be a generator of $t(\pi_7(S^4))$.
We then consider the elements $\alpha = \mu$ and $\beta = 5 \cdot \mu$ in $t(\pi_7(S^4))$.
%As an aside we remark that one can specify an explicit generator as follows, namely one can show that the suspension 
%homomorphism $\Sigma\colon \pi_7(S^4)\to \pi_8(S^5)$ agrees with the map $\Z\{\nu\} \oplus \Z/12 \to \Z/24$ which sends $\nu$ to a generator and is the canonical inclusion on $\Z/12$. Here, $\nu \colon S^7 \to S^4$ denotes the Hopf map. Thus $\Sigma \nu$ is an explicit generator of 
%$\pi_8(S^5)$.

% in particular any element 
%in $\pi_8(S^5)$ is a suspension element. In fact, it follows easily from the fact that $\pi_7(S^4)\cong \Z\cdot \mu \oplus \Z_{12}$ 
%where $\mu$ denotes the  Hopf map $H\colon S^7\to S^4$. It follows that $\Sigma(\nu)$ generates $\pi_8(S^5)$.
%
%We pick a generator $\omega$ of $\pi_8(S^5)$. 
%Furthermore we pick $\alpha\colon S^8\to S^5$ that represents $\omega$ and we pick 
%$\beta\colon S^8\to S^5$ that represents $5\cdot \omega$. 
It follows from Theorem~\ref{thm:hilton} that 
$C_\alpha \not\simeq C_\beta$ and that $C_{\alpha} \vee S^8 \simeq C_{\beta}\vee S^8$. 
Since these CW-complexes are simply connected, we have in fact 
 $C_{\alpha} \vee S^8 \simeq_s C_{\beta}\vee S^8$.
The theorem is now an immediate consequence of Corollary~\ref{thm:need-for-nonsimplyconnectedcase}
applied to the 8-dimensional CW-complexes $X=C_\alpha$, $Y=C_\beta$, $n=4$ and the given $k \geq 17$. 
\end{proof}

%\begin{remark}
%It is known that the suspension homomorphism $\pi_6(S^3)\to \pi_7(S^4)$ is a monomorphism and that the image is isomorphic to $\Z/12$.
%Using this fact one can lower the dimension in Theorem~\ref{thm:simply-connected-case-new} to 17.\label{rem:17}
%Since we are not aware of a good reference for the aforementioned fact we leave the  theorem as it is. 
%\end{remark}

%==========================================
\section{Prime manifolds}\label{section:prime-manifolds}
Let $\M{\Cat,\sc}{n}$ denote the submonoid of $\M{\Cat}{n}$ of simply connected manifolds. The question we want to ask in the present section is whether there exist prime manifolds in higher dimensions at all. While we do not know the answer, we will show that the Wu-manifold is prime among simply connected $5$-folds.

As a warm-up recall that on the one hand the manifolds $\cp^n,\ol{\cp^n}$ and  $S^n\times S^n$ are all irreducible by \cref{cor:examples-topologically-irreducible}, except possibly for $n=2$ and $\Cat = \Diff$ or $\PL$.
On the other, as mentioned before, $(S^2\times S^2)\#\cp^2$ and $\cp^2\# \ol{\cp}^2\# \cp^2$ are well-known to be diffeomorphic, see e.g.\ \cite[p.~151]{GS99} for details. These two observations imply immediately that none of $S^2\times S^2, \cp^2$ or $\overline{\cp^2}$ are prime in $\M{\Cat,\sc}{4}$ or $\M{\Cat}{4}$. In higher dimension we recorded similar behaviour for $S^{2k} \times S^{2k}$ in the first lines of the proof of Theorem \ref{thm:wall-highly-connected}. 
 
\begin{corollary}\label{prop:sn-times-sn-not-prime}
	Let  $n\in \N_{\geq 2}$ be even. Then $S^n\times S^n$ is not prime in $\M{\Cat,\sc}{2n}$ or $\M{\Cat}{2n}$.
\end{corollary}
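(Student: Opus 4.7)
The plan is to exhibit, for each even $n \geq 2$, simply connected manifolds $A$ and $B$ such that $S^n \times S^n$ divides $A \# B$ but divides neither $A$ nor $B$. Since the obstructions we invoke are intersection-form theoretic and survive enlarging the ambient monoid, the same examples show non-primeness in both $\M{\Cat,\sc}{2n}$ and $\M{\Cat}{2n}$.

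For $n \geq 4$ even, I would reuse the highly connected manifold $M$ built in the first paragraph of the proof of Theorem~\ref{thm:wall-highly-connected}: its intersection form is a high enough power $E_8^{\oplus p}$ so as to meet Wall's signature divisibility condition \cite[Proposition~5]{Wa62}, and its $\alpha$-invariant lies in $\ker(\pi_n\mathrm{BSO}(2n) \to \pi_n\mathrm{BSO})$. Wall's classification then forces $M \# \overline{M} \cong m (S^n \times S^n) \# \Sigma$ for some homotopy sphere $\Sigma$ and $m = \rank H_n(M) \geq 8$, whence $S^n \times S^n$ divides $M \# \overline{M}$. On the other hand, if $M \cong (S^n \times S^n) \# Y$ then Lemma~\ref{lem:invariants-additive}(3) would give $Q_M \cong H \oplus Q_Y$; but $Q_M = E_8^{\oplus p}$ is positive definite and hence has no nonzero isotropic vector, a contradiction. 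The identical argument rules out division of $\overline{M}$.

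For $n = 2$, I would take $A = \cp^2$ and $B = \cp^2 \# \overline{\cp^2}$. The well known diffeomorphism $\cp^2 \# \overline{\cp^2} \# \cp^2 \cong (S^2 \times S^2) \# \cp^2$ recalled in the introduction shows $S^2 \times S^2 \mid A \# B$. If $\cp^2 \cong (S^2 \times S^2) \# Y$, then Lemma~\ref{lem:invariants-additive}(3) would give $\langle 1 \rangle \cong H \oplus Q_Y$, impossible on rank grounds. If $\cp^2 \# \overline{\cp^2} \cong (S^2 \times S^2) \# Y'$, the same lemma would give $\langle 1 \rangle \oplus \langle -1 \rangle \cong H \oplus Q_{Y'}$; rank reasons again force $Q_{Y'}$ to vanish and hence $\langle 1 \rangle \oplus \langle -1 \rangle \cong H$, which contradicts the fact that the left hand form is odd while $H$ is even.

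The only piece of genuine geometric input lies in the high-dimensional case, where we borrow Wall's realisation and his diffeomorphism $M \# \overline{M} \cong m(S^n \times S^n) \# \Sigma$ wholesale from Section~\ref{section:wall-hc}; everything else is an arithmetic observation about unimodular integral quadratic forms, so no genuinely new obstacle appears.
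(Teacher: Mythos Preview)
Your proof is correct and follows essentially the same approach as the paper: the case $n\geq 4$ even is exactly the argument recorded in the first paragraph of the proof of Theorem~\ref{thm:wall-highly-connected}, and the case $n=2$ uses the same diffeomorphism $(S^2\times S^2)\#\cp^2\cong \cp^2\#\overline{\cp^2}\#\cp^2$ that the paper invokes just before the corollary. The only minor difference is that for $n=2$ you rule out $S^2\times S^2 \mid \cp^2\#\overline{\cp^2}$ directly via the parity of the intersection form, whereas the paper argues by irreducibility of $\cp^2$ and $\overline{\cp^2}$; your route is arguably cleaner since it sidesteps the caveat (noted in Corollary~\ref{cor:examples-topologically-irreducible}) about irreducibility in $\M{\Diff}{4}$.
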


By contrast, for some odd $n$ Theorem \ref{thm:wall-highly-connected} also implies that $S^n \times S^n$ is prime in $\M{\Diff,\high}{2n}$. We do not know whether this extends to simply-connected manifolds, i.e.\ we do no know whether for those odd   $S^n \times S^n$ is prime in $\M{\Diff,\sc}{2n}$.

\begin{proposition}\label{prop:4dim}
	The monoid $\M{\Cat,\sc}{4}$ has no prime elements. In particular, no simply-connected manifold is prime in $\M{\Cat}{4}$.
\end{proposition}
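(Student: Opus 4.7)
The plan is to show that every non-unit element of $\M{\Cat,\sc}{4}$ fails to be prime by exhibiting, for each such $M$, manifolds $A$ and $B$ with $M \mid A \# B$, $M \nmid A$, and $M \nmid B$. Two smooth diffeomorphisms are the main tools:
\[ (S^2 \times S^2) \# \cp^2 \,\cong\, \cp^2 \# \overline{\cp^2} \# \cp^2, \qquad (S^2 \times S^2) \# \overline{\cp^2} \,\cong\, \cp^2 \# \overline{\cp^2} \# \overline{\cp^2}; \]
the first is classical, and the second follows because $S^2 \times S^2$ admits an orientation-reversing self-diffeomorphism. Non-divisibilities will be detected through the intersection form: ranks of summands add under $\#$, so $M \nmid N$ whenever $\mathrm{rank}(Q_M) > \mathrm{rank}(Q_N)$; and if $M = A \# B$ then $Q_M = Q_A \oplus Q_B$, so parity and signature obstructions rule out many divisibilities.

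I would start by handling reducible non-units: if $M = M_1 \# M_2$ with neither $M_i$ a unit, then \cref{cor:units} gives $\mathrm{rank}(Q_M) > \mathrm{rank}(Q_{M_i})$, so $M \nmid M_i$ while $M \mid M_1 \# M_2$ tautologically, and $M$ is not prime. This reduces the problem to irreducible non-units.

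The low-rank irreducibles follow from the two displayed relations. For $M \cong \cp^2$, the second relation gives $\cp^2 \mid (S^2 \times S^2) \# \overline{\cp^2}$; parity ($H$ is even, hence $\langle 1 \rangle$ is not a summand of $H$) rules out $\cp^2 \mid S^2 \times S^2$, and signature rules out $\cp^2 \mid \overline{\cp^2}$. The case $M \cong \overline{\cp^2}$ is symmetric, and the case $M \cong S^2 \times S^2$ is handled analogously via the first relation. Moreover, any irreducible $M$ with odd $Q_M$ of rank $\geq 2$ is forced to be reducible, since the form splits as $a\langle 1 \rangle \oplus b\langle -1 \rangle$ and Freedman's classification in $\Top$ (respectively Wall's classification in $\PL$, $\Diff$) realizes this splitting at the manifold level. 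So it remains to consider irreducible $M$ with even $Q_M$ of rank $r \geq 4$.

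For this final case, I would stabilize $M$ by $\overline{\cp^2}$ if $Q_M$ is positive definite, by $\cp^2$ if negative definite, or either if indefinite, producing a manifold $X$ whose intersection form is odd and indefinite of rank $r+1 \geq 5$. By Freedman, $X$ decomposes topologically as $\cp^2 \# W$, where $Q_W$ is a non-trivial diagonal odd form of rank $r$ (and $W$ carries the same Kirby--Siebenmann invariant as $M$). Tautologically $M \mid X = \cp^2 \# W$, while $M \nmid \cp^2$ by rank, and $M \nmid W$ by parity: any such divisibility would force $Q_M = Q_W$ by rank, contradicting that $Q_M$ is even and $Q_W$ is odd. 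The main obstacle is smoothly realizing the splitting $X \cong \cp^2 \# W$: this is automatic in $\Top$ by Freedman, but in $\Diff$ amounts to the ``almost complete decomposability after one blowup'' property, known in many cases (e.g., Mandelbaum--Moishezon give $K3 \# \cp^2 \cong 4\cp^2 \# 19\overline{\cp^2}$) but not universally. This gap can be closed by further stabilizing with copies of $S^2 \times S^2$ and invoking Wall's stable-diffeomorphism theorem, so that one works with $X \# k(S^2 \times S^2)$ for $k$ large enough to guarantee the required smooth splitting and reruns the same parity/rank argument.
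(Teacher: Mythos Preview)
Your argument is fine for $\Cat = \Top$ but has a genuine gap for $\Cat = \Diff$ (and hence $\PL$). The assertion that ``any irreducible $M$ with odd $Q_M$ of rank $\geq 2$ is forced to be reducible'' invokes a nonexistent ``Wall's classification'' in the smooth category: Wall only proves a \emph{stable} diffeomorphism theorem, and it is a central phenomenon of smooth $4$-manifold theory that there exist smoothly irreducible simply-connected $4$-manifolds with odd intersection form of large rank (exotic structures on $\cp^2 \# k\,\overline{\cp^2}$, for instance). Your case analysis therefore simply omits these manifolds. The stabilisation fix you propose for the even-form case also does not go through as written: once you pass to $X \# k(S^2\times S^2) \cong \cp^2 \# W'$, the rank of $Q_{W'}$ is $r+2k > r$, so the step ``divisibility would force $Q_M = Q_{W'}$ by rank'' no longer applies, and parity alone is no obstruction to $M \mid W'$, since an even form can perfectly well occur as an orthogonal summand of an odd one.

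The paper's proof sidesteps all of this by arguing contrapositively from the \emph{definition} of prime rather than hunting for a single factorisation $A \# B$. One first shows that every $M \in \M{\Cat,\sc}{4}$ divides some $n\cp^2 \# n'\overline{\cp^2}$: for $\Top$ this is Freedman's classification, and for $\Diff$ one combines Wall's stable theorem with the relation $(S^2\times S^2)\#\cp^2 \cong 2\cp^2 \# \overline{\cp^2}$ to absorb the stabilising copies of $S^2\times S^2$. If $M$ were prime, iterating primality along the factors of $n\cp^2 \# n'\overline{\cp^2}$ would force $M \mid \cp^2$ or $M \mid \overline{\cp^2}$; irreducibility of the latter (your own observation) then gives $M \cong \cp^2$ or $\overline{\cp^2}$, and you have already checked that neither of these is prime. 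No case analysis on the intersection form of $M$ is needed, and the smooth category requires no separate treatment.
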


\begin{proof}
	By Freedman's classification, two simply connected, topological 4-manifolds are homeomorphic if and only if they have isomorphic intersection forms and the same Kirby-Siebenmann invariant. Hence for any such manifolds $M$ there exist $m,m',n,n'\in\N, \epsilon\in\{0,1\}$ such that $M\#m\cp^2\#m'\ol{\cp^2}\# \epsilon*\cp^2$ and $n\cp^2\#n'\ol{\cp^2}$ are homeomorphic. Similarly, if $M$ is assumed smooth (or piecewise linear), it follows from the above and work of Wall \cite[Theorem~3]{Wa64} (see also \cite[p.~149]{Sc05} and \cite[Theorem~1.1]{Qu83}) that there exist $m,m',n,n'$ and $k\in \N$ such that 
	$M\#m\cp^2\#m'\ol{\cp^2}\#k (S^2\times S^2)$ and $n\cp^2\#n'\ol{\cp^2}\#k (S^2\times S^2)$ are diffeomorphic. Using the fact that  $(S^2\times S^2)\#\cp^2$ and $\cp^2\# \ol{\cp}^2\# \cp^2$ are diffeomorphic we can arrange, at the cost of increasing $m,m',n,n'$ that $k=0$. 
	 In either case, if $M$ is prime it follows that $M$ is either $\cp^2$ or $\overline{\cp^2}$, since the latter manifolds are irreducible. But we observed above that they are not prime.
\end{proof}

Turning to dimension $5$, recall that the Wu manifold $\op{SU}(3)/\op{SO}(3)$ is a simply connected, non-spin $5$-manifold with $H_2(W;\mathbb Z) = \Z/2$. 

\begin{proposition}\label{Wu}
	The Wu-manifold $W=\op{SU}(3)/\op{SO}(3)$ is prime in $\M{\Cat,\sc}{5}$.
\end{proposition}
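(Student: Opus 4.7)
My plan is to attach to each $A\in\M{\Diff,\sc}{5}$ the enriched algebraic invariant
\[
\tau(A)\,:=\,\bigl(H_2(A;\Z),\,\lk_A,\,w_2(A)\bigr),
\]
where $w_2(A)\in H^2(A;\Z/2)\cong\hom(H_2(A;\Z),\Z/2)$ is viewed as a linear functional. Two features of $\tau$ underpin the argument: by Lemma~\ref{lem:invariants-additive} it is additive under connected sum, $\tau(A\#B)\cong\tau(A)\oplus\tau(B)$; and by Barden's classification of closed simply-connected smooth $5$-manifolds it is a complete diffeomorphism invariant with every combinatorially admissible triple realized. Under this correspondence, $W\mid A$ is equivalent to the Wu triple $\tau(W)=(\Z/2,q_W,\ell_W)$, with $q_W(e,e)=\tfrac12$ and $\ell_W(e)=1$, occurring as an orthogonal direct summand of $\tau(A)$.

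First I would reduce to the smooth category: for any simply-connected $5$-manifold $M$, $H^4(M;\Z/2)\cong H_1(M;\Z/2)=0$ kills the Kirby--Siebenmann obstruction, and Cerf gives $\PL=\Diff$ through dimension $6$, so $\M{\Top,\sc}{5}=\M{\PL,\sc}{5}=\M{\Diff,\sc}{5}$; that $W$ is not a unit is immediate from Proposition~\ref{cor:units} since its complexity is positive. Assuming $M\#N\cong W\#L$ in this common monoid, additivity yields $\tau(M)\oplus\tau(N)\cong\tau(W)\oplus\tau(L)$.

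The crux is to extract a copy of $\tau(W)$ as a summand of $\tau(M)$ or $\tau(N)$ via a Krull--Schmidt theorem for enriched triples $(G,\lk,w)$. The bare version for non-singular linking forms on finite abelian groups is classical (going back to Wall and made fully explicit by Kawauchi--Kojima), and the Wu block $\tau(W)$ is automatically indecomposable because its underlying group is $\Z/2$; augmentation by the $\Z/2$-valued linear form $w_2$ preserves uniqueness of decomposition. After reindexing, $\tau(M)\cong\tau(W)\oplus\tau'$ for some triple $\tau'$. Barden's realizability produces $M'\in\M{\Diff,\sc}{5}$ with $\tau(M')=\tau'$, and then $\tau(W\#M')=\tau(M)$ forces $M\cong W\#M'$ by Barden's classification, giving $W\mid M$.

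The main obstacle is the careful justification of Krull--Schmidt in the presence of the extra linear form $w_2$: one must verify that the Kawauchi--Kojima decomposition of the underlying linking form into indecomposable blocks lifts to a compatible decomposition of the triple, and that this lift is unique. A clean alternative, bypassing this enrichment, is to work directly with Barden's explicit calculus for connected sums of the prime summands $X_j$ and $M_k$, and to observe by case analysis that the Wu summand $X_{-1}$ can only appear in a connected sum if it is already present in one of the factors; either route yields the conclusion.
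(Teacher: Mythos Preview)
Your primary route rests on a misconception: Krull--Schmidt does \emph{not} hold for non-singular linking forms, and this is not what Wall or Kawauchi--Kojima establish---their work gives a classification via a complete set of invariants, not uniqueness of decomposition into indecomposables. Already on $(\Z/2)^3$, writing $q$ for the rank-one form with $q(e,e)=\tfrac12$ on $\Z/2$ and $E$ for the hyperbolic form on $(\Z/2)^2$, one has $q^{\oplus 3}\cong q\oplus E$ via the basis $(1,1,1),(1,1,0),(1,0,1)$, and $E$ is indecomposable since every element is isotropic. So indecomposability of the Wu block does not by itself give primality, and the ``bare'' statement you hope to lift to the $w_2$-enriched setting is already false. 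It may still be true that $\tau(W)$ is prime in the monoid of realisable triples, but that is precisely the content of the proposition, and your sketch supplies no argument for it beyond the false Krull--Schmidt claim. (Incidentally, by Barden's theorem the linking form is determined by the pair $(H_2,w_2)$, so carrying $\lk$ as an independent datum only complicates matters.)

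Your alternative route---Barden's explicit calculus for the building blocks $X_j,M_k$---is essentially the paper's approach. The paper packages it via a single numerical invariant, the \emph{height} $h(M)\in\{0,1,2,\dots,\infty\}$ of $w_2\colon H_2(M;\Z)\to\Z/2$ (the exponent $\ell$ such that, after an automorphism of $H_2$, $w_2$ becomes projection to a $\Z/2^\ell$-summand followed by reduction). Two short lemmas then do all the work: (i)~$W\mid M$ if and only if $h(M)=1$; and (ii)~$h(M\#N)=\min(h(M),h(N))$ when both are non-spin, while if one factor is spin $h(M\#N)$ equals the height of the other. Together these give $h(M\#N)=1\Leftrightarrow h(M)=1$ or $h(N)=1$, which is exactly primality of $W$. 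This avoids linking forms and any Krull--Schmidt-type input. Your reduction to $\Cat=\Diff$ agrees with the paper's.
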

We will use Barden's classification of smooth simply connected 5-manifolds \cite{Ba65,Cr11}. There are two invariants which are important to us:
\begin{enumerate}
\item $H_2(M;\Z)$ with its torsion subgroup $TH_2(M;\Z)$. The group $TH_2(M;\Z)$ is always isomorphic to $A\oplus A \oplus C$ where $C$ is either trivial or cyclic of order 2 and $A$ is some finite abelian group.
\item The height $h(M) \in \mathbb{N}_0 \cup \{\infty\}$: If $M$ is spin, one sets $h(M) = 0$. If $M$ is non-spin, $w_2 \colon H_2(M;\Z) \to \Z/2$ is a surjection. It is an algebraic fact that for any surjection $w\colon H \to \Z/2$ where $H$ is a finitely generated abelian group, there exists an isomorphism $H \cong H' \oplus \Z/2^\ell$ such that $w$ corresponds to the composite $H' \oplus \Z/2^\ell \to \Z/2^\ell \to \Z/2$. Here, $\ell$ is allowed to be $\infty$, where we (ab)use the notation that $\Z/2^\infty = \Z$. The number $\ell$ is then defined to be the height $h(M)$ of $M$. Note that here we follow the wording of \cite{Ba65}, an equivalent definition of the height is given in \cite{Cr11}. 
\end{enumerate}

Barden's classification says that the map $\M{\Diff,\sc}{5} \to \mathrm{Ab} \times (\mathbb{N}_0 \cup \{\infty\})$ sending a manifold $M$ to the pair $(H_2(M;\Z),h(M))$ is injective, and that the following two statements are equivalent:
\bnm
\item a pair $(B,k)$ lies in the image,
\item the torsion subgroup $TB$ is of the form $A \oplus A \oplus \Z/2$ if  $k=1$ and it is of the form $A\oplus A$ otherwise.
\enm
Moreover, the above map restricts to a bijection between spin manifolds and the pairs $(B,0)$ where $TB \cong A \oplus A$.
\begin{lemma}\label{lem:barden}
\begin{enumerate}[font=\normalfont]
\item $h(M\# N) = \begin{cases} h(M)&\text{ if }h(N)=0,\\ h(N) & \text{ if } h(M) = 0, \\ \min(h(M),h(N)) & \text{ if } h(M) \neq 0 \neq h(N) \end{cases}$
\item $h(M\# N) = 1$ if and only if $h(M) = 1$ or $h(N) = 1$,
\item The Wu manifold $W$ divides $M$ if and only if $h(M) = 1$,
\end{enumerate}
\end{lemma}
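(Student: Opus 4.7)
The plan is to deduce (1) from the additivity $H_2(M \# N) \cong H_2(M) \oplus H_2(N)$ together with the compatibility $w_2(M \# N) = w_2(M) + w_2(N)$ under this splitting, combined with an algebraic normal-form lemma for pairs $(H, w \colon H \twoheadrightarrow \Z/2)$. Parts (2) and (3) will then follow formally: (2) is an immediate reformulation of (1), and (3) combines (1) with Barden's classification of simply connected smooth $5$-manifolds.

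For (1), first observe that if $N$ is spin then $w_2(N) = 0$, so the decomposition witnessing $h(M)$ transfers to one witnessing $h(M \# N) = h(M)$. When both $h(M), h(N) \geq 1$, fix height-realizing decompositions $H_2(M) \cong A \oplus \Z/2^{h(M)}$ and $H_2(N) \cong B \oplus \Z/2^{h(N)}$. The required formula then reduces to the following algebraic lemma (adopting the convention $\Z/2^\infty = \Z$): for $a, b \geq 1$, the pair $(\Z/2^a \oplus \Z/2^b,\ w(x, y) = x + y \bmod 2)$ is isomorphic to $(\Z/2^{\max(a,b)} \oplus \Z/2^{\min(a,b)},\ \text{projection to the second factor reduced mod } 2)$, so has height $\min(a, b)$. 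This is verified by the change of basis $e_1 = (1, 1)$, which has order $2^{\max(a,b)}$ and lies in $\ker w$, together with $e_2 \in \{(1, 0), (0, 1)\}$ chosen to have order $2^{\min(a,b)}$ with $w(e_2) = 1$; one checks that $\langle e_1 \rangle \cap \langle e_2 \rangle = 0$ and that the product of their orders equals $|H|$, so these elements generate a direct-sum decomposition of the claimed form. Applying this summand-by-summand yields (1), and (2) is the immediate reformulation.

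For (3), the forward direction is a direct application of (1): the Wu manifold has $H_2(W;\Z) = \Z/2$ with $w_2$ the identity, so $h(W) = 1$, and by (1) we get $h(W \# M') = 1$ for every $M'$. Conversely, suppose $h(M) = 1$. By Barden's realizability criterion, $TH_2(M;\Z) \cong A \oplus A \oplus \Z/2$, and by the definition of height there is a splitting $H_2(M) \cong H' \oplus \Z/2$ on which $w_2$ is projection onto the $\Z/2$ factor. Then $TH' \cong A \oplus A$, which is precisely Barden's condition for the existence of a simply connected \emph{spin} smooth $5$-manifold $M'$ with $H_2(M') \cong H'$. The manifold $W \# M'$ then has $H_2 \cong \Z/2 \oplus H' \cong H_2(M)$ and, by (1), height $1 = h(M)$; Barden's classification delivers the diffeomorphism $W \# M' \cong M$, showing $W \mid M$. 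The main point requiring care is the algebraic lemma, where one must handle the infinite-height case uniformly, but the element $(1, 1)$ still has the correct (infinite) order in $\Z \oplus \Z/2^b$ or $\Z \oplus \Z$, so the intersection and counting arguments go through unchanged.
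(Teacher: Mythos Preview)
Your proof is correct and essentially identical to the paper's: both reduce (1) to the same change-of-basis lemma on $\Z/2^k \oplus \Z/2^\ell$ using the element $(1,1)$ to kill the diagonal, deduce (2) immediately, and establish (3) by building a spin complement via Barden's realisability and then invoking the classification. The only cosmetic difference is that the paper writes down the automorphism $(1,0)\mapsto(1,1)$, $(0,1)\mapsto(0,1)$ directly, whereas you phrase the same move as choosing the new basis $e_1=(1,1)$, $e_2$ and checking trivial intersection plus an order count (which, as you note, becomes a direct generation check when a height is~$\infty$).
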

\begin{proof}
To see (1), we observe that $M\#N$ is spin if and only if both $M$ and $N$ are spin. Furthermore, it is clear from the above definition of the height that if $M$ is spin, then $h(M\#N) = h(N)$. To see the case where both $M$ and $N$ are not spin, it suffices to argue that if $\ell \leq k$, and we consider the map $\Z/2^k \oplus \Z/2^\ell \to \Z/2$ which is the sum of the canonical projections, then there is an automorphism of $\Z/2^k \oplus \Z/2^\ell$ such that this map corresponds to the map $\Z/2^k \oplus \Z/^\ell \to \Z/2^\ell \to \Z/2$: The automorphism is given by sending $(1,0)$ to $(1,1)$ and $(0,1)$ to $(0,1)$. Statement (2) is then an immediate consequence of (1). To see (3) we first assume that $W$ divides $M$, i.e.\ that $M$ is diffeomorphic to $W \# L$. We find that $h(W\# L) = 1$ by (1). Conversely, suppose that $h(M) = 1$. By Barden's classification, we know that the \emph{torsion} subgroup of $H_2(M;\Z)$ is of the form $A \oplus A \oplus \Z/2$, in particular $H_2(M;\Z) \cong \Z^n \oplus A \oplus A \oplus \Z/2$ for some $n \geq 0$. Again, by the classification, there exists a spin manifold $L$ with $H_2(L;\Z) \cong \Z^n \oplus A \oplus A$. We find that $W \# L$ and $M$ have isomorphic $H_2(-;\Z)$ and both height $1$, so they are diffeomorphic, and thus $W$ divides $M$.
\end{proof}

\begin{proof}[Proof of Proposition \ref{Wu}]
	First we consider $\M{\PL,\sc}{5}=\M{\Diff,\sc}{5}$. If $W$ divides $M\# N$, then $h(M \#N )=1$, we may then without loss of generality assume that $h(M) =1$, so that $W$ divides $M$.
	
	Every simply connected topological 5-manifold admits a smooth structure, since the Kirby-Siebenmann invariant lies in $H^4(M;\Z/2) \cong H_1(M;\Z/2) = 0$. As the invariants in Barden's classification are homotopy invariants, it follows that $\M{\Top,\sc}{5}=\M{\Diff,\sc}{5}$. In particular, the Wu manifold is also prime in $\M{\Top,\sc}{5}$.
\end{proof}

%==========================================
\section{Questions and problems} \label{section:questions}
We conclude this paper with a few questions and challenges.

\begin{question}
\mbox{}
\bnm[font=\normalfont]
\item Let $n\geq 4$. Does there exist a  non-trivial cancellable element in any of the monoids $\M{\Top}{n}$,
$\M{\PL}{n}$,
$\M{\Diff}{n}$?
\item Let $n\geq 6$.  Does there exist a   non-trivial cancellable element in any of the monoids $\M{\Top,\sc}{n}$,
$\M{\PL,\sc}{n}$,
$\M{\Diff,\sc}{n}$?
\enm
\end{question}

\begin{question}
\mbox{}
\bnm[font=\normalfont]
\item Let $n\geq 4$.  Does there exist a prime element in any of the monoids $\M{\Top}{n}$,
$\M{\PL}{n}$,
$\M{\Diff}{n}$?
\item Let $n\geq 6$.  Does there exist a prime element in any of the monoids $\M{\Top,\sc}{n}$,
$\M{\PL,\sc}{n}$?
\enm
\end{question}

In light of  Theorem~\ref{thm:simply-connected-case} and the remark on page~\pageref{rem:17} we also raise the following related question.

\begin{question}
For which $n\in 5,\dots,16$ is  $\M{\Cat,\sc}{n}$  a unique factorisation monoid?
\end{question}

The following question arises naturally from Proposition~\ref{Wu}.

\begin{question}
	Is the Wu manifold prime in $\M{\Top}{5}$ or $\M{\Diff}{5}$?
\end{question}

Throughout the paper we worked mostly with simply connected and highly connected manifolds. 
It is reasonable to ask what is happening at the end of the spectrum, namely when we restrict ourselves to aspherical manifolds. 

\begin{question} \label{qn}
In any of the three categories $\Top$, $\PL$ and $\Diff$, is the monoid generated by aspherical  manifolds  a unique decomposition factorization monoid for $n\geq 4$?
\end{question}

As a partial answer to Question \ref{qn},
we would like to thank the referee for pointing out that
%mention that 
in the topological category %$\Top$ 
unique decomposition factorization is implied by the Borel conjecture.
To see this, 
we first note that the fundamental group of an aspherical manifold can not be a non-trivial free product of groups.
\begin{lemma}
	\label{lem:indec}
	Let $M$ be aspherical closed $n$-manifold and let $\pi_1(M)\cong G*H$. Then either $G$ or $H$ is trivial.
\end{lemma}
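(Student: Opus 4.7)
The plan is to exploit that $\pi := \pi_1(M)$ is a Poincar\'e duality group and to invoke Strebel's theorem on the cohomological dimension of infinite-index subgroups.

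First I dispense with low dimensions: for $n \le 1$ the manifold $M$ is a point or $S^1$, whose fundamental group (respectively $1$ and $\Z$) admits no non-trivial free product decomposition. So I may assume $n \ge 2$.

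Since $M$ is a closed aspherical $n$-manifold, it is a finite-dimensional model for $K(\pi,1)$, so $\pi$ is torsion-free and is an $n$-dimensional Poincar\'e duality group; in particular $\op{cd}(\pi) = n$. Suppose for contradiction that both $G$ and $H$ are non-trivial. As subgroups of the torsion-free group $\pi$ they are themselves torsion-free, hence infinite. Inside $G*H$ the powers of any non-identity element of $H$ lie in distinct left cosets of $G$ (since $G\cap H=\{e\}$ in a free product), so $[G*H:G]=\infty$, and symmetrically $[G*H:H]=\infty$.

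The key input is Strebel's theorem: an infinite-index subgroup of a $PD_n$ group has cohomological dimension strictly less than $n$. Hence $\op{cd}(G),\op{cd}(H) < n$. Combined with the standard identity $\op{cd}(G*H) = \max(\op{cd}(G),\op{cd}(H))$ for non-trivial free products, this forces $\op{cd}(\pi) < n$, contradicting $\op{cd}(\pi) = n$.

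The main obstacle is conceptual rather than technical: one needs Strebel's theorem, or equivalently the fact that $PD_n$ groups with $n \ge 2$ are one-ended, which would allow an alternative proof via Stallings' theorem (a non-trivial free product of torsion-free groups has infinitely many ends, whereas a closed aspherical manifold of dimension $\ge 2$ has one-ended fundamental group). Either route works, and once these group-theoretic ingredients are in place the conclusion is immediate.
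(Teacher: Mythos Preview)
Your proof is correct, but it takes a genuinely different route from the paper.

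The paper gives an elementary covering-space argument: since $M$ is aspherical, $M \simeq BG \vee BH$, so $H_n(M;\Z/2) \cong H_n(BG;\Z/2) \oplus H_n(BH;\Z/2) \cong \Z/2$. Assuming $H_n(BH;\Z/2)=0$, one passes to the cover $\widehat{M}$ corresponding to the projection $G*H \to H$; its fundamental group is $*_{h\in H}\,hGh^{-1}$, so $\widehat{M} \simeq \bigvee_{|H|} BG$ and $H_n(\widehat{M};\Z/2) \cong (\Z/2)^{|H|}$. As $\widehat{M}$ is a connected $n$-manifold this group is either $0$ or $\Z/2$, forcing $|H|=1$.

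Your approach instead recognises $\pi_1(M)$ as a $PD_n$ group and invokes Strebel's theorem (or, equivalently, the Stallings ends theorem) together with the formula $\op{cd}(G*H)=\max(\op{cd}G,\op{cd}H)$. This is conceptually clean and locates the lemma within the general theory of Poincar\'e duality groups, at the price of importing substantial machinery that the paper does not otherwise use. The paper's argument, by contrast, is entirely self-contained --- just homology of wedges and the fact that $H_n$ of a connected $n$-manifold with $\Z/2$ coefficients has rank at most one --- which fits better with the elementary spirit of this section. Either argument is perfectly valid; yours trades elementariness for a broader perspective.
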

\begin{proof}
As $M$ is aspherical the assumption that $\pi_1(M) \cong G * H$ 
implies that $M$ is homotopy equivalent to $B(G*H)\simeq BG\vee BH$.
Note that for all $k\in \N$ we have 
$$\widetilde H_k(BG\vee BH;\Z/2)\cong \widetilde H_k(BG;\Z/2)\oplus \widetilde H_k(BH;\Z/2).$$
% 
%For the homology of $M$ we have 
Hence $\Z/2\cong H_n(M;\Z/2)\cong H_n(BG\vee BH;\Z/2)\cong H_n(BG;\Z/2)\oplus H_n(BH;\Z/2)$. 
Now we suppose without loss of generality that $H_n(BH;\Z/2)$ is trivial. Consider the cover $\widehat{M}$ of $M$ corresponding to the canonical map $\pi_1(M)\cong G*H\to H$. Then $\widehat{M}$ is homotopy equivalent to $\bigvee_{i=1}^{|H|} BG$ and hence $H_n(\widehat{M};\Z/2)\cong \bigoplus_{i=1}^{|H|} \Z/2$. As $\widehat{M}$ is a manifold, this implies $|H|=1$ and thus $H$ is trivial. 
\end{proof}

Recall that by the Grushko--Neumann theorem \cite[Corollay~IV.1.9]{LS77}
together with the Kurosh isomorphism theorem \cite[Isomorphiesatz]{Ku34}\cite[p.~27]{Ku60} we obtain the following lemma.
\begin{lemma}
	\label{lem:grushko}
	Every non-trivial, finitely generated group $G$ can be decomposed as a free product
	\[ G\cong A_1*\ldots *A_r*F_k, \]
	where $F_k$ is a free group of rank $k$, each of the groups $A_i$ is non-trivial, freely indecomposable and not infinite cyclic; moreover, for a given $G$, the numbers $r$ and $k$ are uniquely determined and the groups $A_1,\ldots, A_r$ are unique up to reordering and conjugation in $G$. That is, if $G \cong B_1*\ldots B_s*F_l$ is another such decomposition then $r=s$, $k=l$, and there exists a permutation $\sigma\in S_r$ such that for each $i=1,\ldots,r$ the subgroups $A_i$ and $B_{\sigma(i)}$ are conjugate in $G$. 
\end{lemma}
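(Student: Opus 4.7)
My plan is to deduce the lemma directly from the two cited theorems: existence follows from the Grushko--Neumann rank additivity, and uniqueness from the Kurosh isomorphism theorem, together with a short bookkeeping argument to pin down the free rank $k$.

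For existence, I would induct on $d(G)$. If $G$ is itself freely indecomposable, then either $G\cong \Z = F_1$ (so $r=0$, $k=1$), or else $G = A_1$ is already of the required form (so $r=1$, $k=0$). Otherwise write $G = H_1*H_2$ with both factors non-trivial. By Grushko--Neumann, $d(G) = d(H_1)+d(H_2)$, so each $H_i$ is finitely generated of strictly smaller rank. The inductive hypothesis applied to $H_1$ and $H_2$ gives decompositions into freely indecomposable non-trivial factors; concatenating these yields a decomposition $G \cong C_1*\cdots*C_m$ of the same shape. Collecting all the infinite-cyclic $C_j$'s into a single free group $F_k$ and relabelling the remaining factors as $A_1,\ldots,A_r$ puts the decomposition in the required form. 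The process terminates because $m$ is bounded above by $d(G)$, which is finite by hypothesis.

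For uniqueness, suppose $G \cong A_1*\cdots*A_r*F_k \cong B_1*\cdots*B_s*F_l$ are two decompositions as in the statement. The Kurosh isomorphism theorem, applied to the freely indecomposable non-infinite-cyclic factors of these two free product decompositions, yields $r = s$ together with a permutation $\sigma \in S_r$ such that $A_i$ is conjugate to $B_{\sigma(i)}$ in $G$. In particular $d(A_i) = d(B_{\sigma(i)})$ for each $i$. Iterating Grushko--Neumann on both decompositions gives
\[ d(G) \,=\, \tmsum{i=1}{r} d(A_i) + k \,=\, \tmsum{j=1}{s} d(B_j) + l,\]
and comparing these two expressions forces $k = l$. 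The main obstacle is simply to invoke the Kurosh isomorphism theorem in exactly the form needed: its classical statement separates the infinite-cyclic (free) factors from the rest, since these cannot individually be pinned down up to conjugation (a free group of rank $k$ has many distinct decompositions as a free product of $k$ copies of $\Z$). This is precisely why the lemma is phrased with the $\Z$-factors bundled into a single $F_k$; once the Kurosh statement is taken in this bundled form, the rest is a routine application of Grushko--Neumann.
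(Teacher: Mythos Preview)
Your proposal is correct and matches the paper's approach exactly: the paper does not give a proof but simply records that the lemma follows from the Grushko--Neumann theorem together with the Kurosh isomorphism theorem, citing \cite[Corollary~IV.1.9]{LS77} and \cite{Ku34,Ku60}. You have supplied precisely the details the paper omits, using the two cited results in the expected way.
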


\begin{proposition}\label{prop:borel-conjecture-implies-unique-dec}
	Assume that the Borel conjecture is true. Then in $\Top$ the monoid generated by aspherical manifolds is a unique decomposition factorization monoid for $n\geq 4$.
\end{proposition}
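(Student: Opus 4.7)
The strategy uses three main inputs: the free-product decomposition of the fundamental group of a connected sum (Lemma~\ref{lem:invariants-additive}(1)), the Grushko--Kurosh uniqueness statement (Lemma~\ref{lem:grushko}), and the Borel conjecture, which identifies closed aspherical manifolds up to homeomorphism by their fundamental group. Write $\AA \subseteq \M{\Top}{n}$ for the submonoid generated by aspherical manifolds. Since $\M{\Top}{n}$ has only the trivial unit by Proposition~\ref{cor:units}, so does $\AA$, and it suffices to produce an isomorphism $\N^{\PP(\AA)} \cong \AA$ in which the aspherical manifolds form the prime basis.

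First I would verify that every closed aspherical $n$-manifold $A$ is irreducible in $\AA$. Given a decomposition $A = X \# Y$ with $X,Y\in \AA$, expand $X$ and $Y$ as connected sums of aspherical manifolds, so that by van Kampen $\pi_1(A)$ is the free product of the fundamental groups of all summands. Lemma~\ref{lem:indec} forces all but one of these factors to be trivial; but any closed aspherical manifold of positive dimension has non-trivial fundamental group (it is not contractible). Hence one of $X$, $Y$ must be an empty connected sum, i.e.\ the neutral element $[S^n]$.

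For uniqueness, suppose that $A_1 \# \cdots \# A_k = B_1 \# \cdots \# B_l$ in $\M{\Top}{n}$ with the $A_i$ and $B_j$ all aspherical. Van Kampen gives
\[
\pi_1(A_1) * \cdots * \pi_1(A_k) \,\cong\, \pi_1(B_1) * \cdots * \pi_1(B_l).
\]
Each factor is freely indecomposable by Lemma~\ref{lem:indec}, is non-trivial, and is not infinite cyclic when $n\geq 2$ (as $B\Z\simeq S^1$ is only a $1$-manifold). Lemma~\ref{lem:grushko} then yields $k=l$, a permutation $\sigma$, and isomorphisms $\pi_1(A_i) \cong \pi_1(B_{\sigma(i)})$ up to conjugation, and Borel promotes these to homeomorphisms $A_i \to B_{\sigma(i)}$.

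The step that will take the most care is ensuring these homeomorphisms preserve orientation, so that the $A_i$ and $B_{\sigma(i)}$ agree as classes in $\M{\Top}{n}$. The orientation on $M = A_1 \# \cdots \# A_k$ determines, via the collapse map $M \to \bigvee_i A_i$, a fundamental class on each summand; under the homotopy equivalence $\bigvee_i A_i \simeq B\pi_1(M)$ the image of $[M]$ in $\bigoplus_i H_n(B\pi_1(A_i))$ has $i$th entry $[A_i]$. Comparing with the analogous computation via the $B_j$, the Grushko--Kurosh identifications can be adjusted (without affecting their conjugacy class) to send $[A_i]$ to $[B_{\sigma(i)}]$. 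Since any self-map of an aspherical manifold is determined up to homotopy by its effect on $\pi_1$, the degree of the Borel homeomorphism is then $+1$, so it is orientation-preserving and we obtain $A_i = B_{\sigma(i)}$ in $\M{\Top}{n}$. This establishes the required isomorphism $\N^{\PP(\AA)} \cong \AA$ and simultaneously identifies the primes of $\AA$ with the aspherical manifolds.
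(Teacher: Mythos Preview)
Your proof is correct and follows essentially the same route as the paper's: both use van Kampen to get a free-product decomposition of $\pi_1$, invoke Lemma~\ref{lem:indec} for free indecomposability, apply the Grushko--Kurosh statement (Lemma~\ref{lem:grushko}) to match the factors, and then use the Borel conjecture together with the collapse map $M \to \bigvee_i A_i$ tracking the fundamental class to pin down the orientations. The only difference is cosmetic: you frame the argument around explicitly identifying the primes of $\AA$ with the aspherical manifolds (proving irreducibility first), whereas the paper directly compares two given decompositions; and the paper is slightly more explicit about how the conjugacy in Lemma~\ref{lem:grushko} yields an actual isomorphism $\pi_1(N_j)\to\pi_1(M_i)$ via the inclusion--projection composite through the free product.
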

\begin{proof}
	Let $N:=N_1\#\ldots  \# N_k$ and $M:=M_1\#\ldots \# M_{l}$ with $N_i$ and $M_j$ aspherical for all $i$ and $j$. Crushing all connecting spheres to points, we obtain projection maps $p_N\colon N\to N_1\vee\ldots\vee N_k$ and $p_M\colon M\to M_1\vee\ldots\vee M_l$. 
	Suppose there is an orientation preserving homeomorphism $f\colon N \xrightarrow{\cong} M$. The maps $p_N$ and $p_M$ induce isomorphisms on the fundamental groups by \cref{lem:invariants-additive}.\eqref{it:fundamentalgroup}. We consider the isomorphism $\varphi:= p_{M*}\circ f_*\circ p_{N*}^{-1}\colon 
	\pi_1(N_1)*\dots*\pi_1(N_k)\to \pi_1(M_1)*\dots*\pi_1(M_l)$.  Note that $\pi_1(N_j)$ is never infinite cyclic since the dimension of $N_j$ is larger than one. By \cref{lem:indec} and \cref{lem:grushko}, $k=l$ and for each $j$ there is an $i$ with $\pi_1(N_j)\cong \pi_1(M_i)$. Moreover, since $\varphi(\pi_1(N_j))$ is conjugate to $\pi_1(M_i)$ in $\pi_1(M)$,
	we see that the map $\pi_1(N_j)\to \pi_1(N_1)*\dots* N_k\xrightarrow{\varphi} \pi_1(M_1)* \dots * \pi_1(M_k)\to \pi_1(M_i)$ is an isomorphism. Hence it induces a homotopy equivalence $N_j\to M_i$.  The fundamental class of $N$ is mapped to the fundamental classes $([N_j])_j\in\bigoplus_{r=1}^kH_n(N_r;\Z)\cong H_n(N_1\vee\ldots\vee N_k;\Z)$. Hence the homotopy equivalence $N_j\to M_i$ is orientation preserving. Assuming the Borel conjecture, it follows that $N_j$ is orientation preserving homeomorphic to $M_i$. Thus the decomposition is unique. 
\end{proof}

%\begin{proof}
%	Let $N:=N_1\#\ldots  \# N_k$ and $M:=M_1\#\ldots \# M_{l}$ with $N_i$ and $M_j$ aspherical for all $i$ and $j$. Crushing all connecting spheres to points, we obtain projection maps $p_N\colon N\to N_1\vee\ldots\vee N_k$ and $p_M\colon M\to M_1\vee\ldots\vee M_l$. 
%	Suppose there is an orientation preserving homeomorphism $f\colon N \xrightarrow{\cong} M$. Since $p_N$ and $p_M$ induce isomorphisms on the fundamental groups by \cref{lem:invariants-additive}.\eqref{it:fundamentalgroup}, $f$ induces a homotopy equivalence $\bar{f}\colon N_1\vee\ldots\vee N_k\to M_1\vee\ldots\vee M_l$. Note that $\pi_1(N_i)$ is never infinite cyclic since the dimension of $N_i$ is larger than one. By \cref{lem:indec} and \cref{lem:grushko}, $k=l$ and for each $i$ there is a $j$ with $\pi_1(N_i)\cong \pi_1(M_j)$. Moreover, since $\bar{f}(\pi_1(N_i))$ is conjugate to $\pi_1(M_j)$ in $\pi_1(M)$, under the projections $N_1\vee\ldots\vee N_k\to N_i$ and $M_1\vee\ldots\vee M_l\to M_j$, $\bar f$ induces a homotopy equivalence $N_i\to M_j$. The fundamental class of $N$ is mapped to the fundamental classes $([N_i])_i\in\bigoplus_{i=1}^kH_n(N_i;\Z)\cong H_n(N_1\vee\ldots\vee N_k;\Z)$. Hence the induced homotopy equivalence $N_i\to M_j$ is orientation preserving. Assuming the Borel conjecture, it follows that $N_i$ is orientation preserving homeomorphic to $M_j$. Thus the decomposition is unique. 
%\end{proof}

\begin{remark}
Finally, in the smooth and $PL$ categories in dimensions $n \geq 5$,
we also thank the referee  for suggesting that the existence of
exotic tori might lead to the failure of unique factorisation in the
monoid generated by aspherical manifolds:
We think that this is an attractive approach to attacking Question \ref{qn}.
\end{remark}


\begin{thebibliography}{10}
\bibitem[Ba65]{Ba65}
D. Barden. {\em Simply connected five-manifolds},  Ann. of Math. 82 (1965),  365--385.
\bibitem[Bay80]{Bay80}
E. Bayer. {\em Factorisation is not unique for higher dimensional knots}, Comment. Math. Helv. 55, 583-592 (1980).
\bibitem[BHHM20]{BHHM}
M. Behrens, M. Hill, M. Hopkins and M. Mahowald. {\em Detecting exotic spheres in low dimensions using coker J}, J. Lond. Math. Soc. (2) 101 (2020), no. 3, 1173--1218.
\bibitem[BW05]{BW05} F. R. Beyl and N. Waller. {\em A stably free nonfree module and its relevance for homotopy classification, case $Q_{28}$}, Algebr. Geom. Topol. 5 (2005), 899--910.
%\bibitem[Bo91]{Bo91}
%I. Bokor. {\em On Genus and Cancellation in Homotopy}, Israel J. Math. 73 (3) (1991), 361--379.
\bibitem[Bre93]{Bre93}
G. Bredon. {\em Topology and geometry}, Graduate Texts in Mathematics 139. Springer-Verlag (1993)
\bibitem[Bro79]{Bro79} W. H. Browning. {\em Homotopy types of certain finite CW-complexes with finite fundamental group}, PhD Thesis, Cornell University (1979).
%\bibitem[Co73]{Co73}
%M. Cohen. {\em A course in simple-homotopy theory},  Graduate Texts in Mathematics, Vol. 10. Springer-Verlag (1973)
\bibitem[Ca74]{Ca74} S. E. Cappell, {\em On connected sums of manifolds}, Topology 13 (1974), 395--400.
\bibitem[Ca76]{Ca76} S. E. Cappell, {\em A spitting theorem for manifolds}, Inventiones Math.\ 33 (1976), 69--170.
\bibitem[CS71]{CS71} 
S. E. Cappell and J. L. Shaneson. {\em On four-dimensional surgery and applications}, Comment. Math. Helv. 46 (1971), 500--528.
\bibitem[Ce68]{Ce68}
J. Cerf. {\em Sur les diff\'eomorphismes de la sph\`ere de dimension trois $(\Gamma_4=0)$}, Lecture Notes in Mathematics 53, Springer Verlag (1968)
\bibitem[CK70]{CK70}
J. Cheeger and J. M. Kister. {\em Counting topological manifolds}, Topology 9 (1970), 149--151.
\bibitem[Cr11]{Cr11}
D. Crowley. {\em 5-manifolds: 1-connected}, Bulletin of the Manifold Atlas (2011), 49-55.
\bibitem[Do83]{Do83}
S. K. Donaldson. {\em An application of gauge theory to four-dimensional topology}, J. Diff. Geom. 18
(1983), 279--315.
\bibitem[Ed84]{Ed84}
R. D. Edwards. {\em The solution of the 4-di\-men\-si\-onal Annulus conjecture $($after Frank Quinn$)$},
in ``Four-manifold Theory'', Gordon and Kirby ed., Contemporary Math. 35
(1984), 211--264.
\bibitem[FNOP19]{FNOP19}
S. Friedl,  M. Nagel, P. Orson and M. Powell.
{\em A survey of the foundations of $4$-manifold theory}, Preprint (2019), arXiv:1910.07372.
\bibitem[Fr82]{Fr82}
M. Freedman. {\em The topology of four-dimensional manifolds}, J. Diff. Geom. 17 (1982), 357--453.
\bibitem[Fr38]{Fr38}
H. Freudenthal. {\em \"{U}ber die Klassen der Sph\"{a}renabbildungen I. Gro{\ss}e Dimensionen},  Compos. Math. 5 (1938), 299--314
\bibitem[GS99]{GS99}
R. Gompf and A. Stipsicz. {\em 4-manifolds and Kirby calculus}, Graduate Studies in Mathematics 20,
AMS (1999).
\bibitem[Ha01]{Ha01}
A. Hatcher. {\em Algebraic Topology}, Cambridge University Press (2001)
\bibitem[Hi55]{Hi55}
P. J. Hilton. {\em On the homotopy groups of the union of spheres}, J. London Math. Soc., Second Series, 30 (2) (1955), 154--172.
\bibitem[Hi65]{Hi65}
P. J. Hilton. {\em Homotopy Theory and Duality}, Gordon and Breach, New York, (1965).
\bibitem[Hi67]{Hi67}
P. J. Hilton. {\em On the Grothendieck group of compact polyhedra}, Fund. Math. 61 (1967), 199--214.
\bibitem[HK88]{HK88}
I. Hambleton and M. Kreck. {\em On the classification of topological 4-manifolds with finite fundamental group}, Math. Annalen. 280 (1988), 85--104.
\bibitem[HK93a]{HK93 I} 
I. Hambleton and M. Kreck. {\em Cancellation of lattices and finite two-complexes}, J. reine angew. Math 442 (1993), 91--109.
\bibitem[HK93b]{HK93 II}
I. Hambleton and M. Kreck. {\em Cancellation results for 2-complexes and 4-manifolds and some applications}, Two-dimensional homotopy and combinatorial group theory, 281-308, London Math. Soc. Lecture Note Ser., 197, Cambridge Univ. Press, Cambridge, 1993.
\bibitem[He76]{He76}
J. Hempel. {\em 3-manifolds}, 
Annals of Mathematics Studies 86. Princeton, New Jersey: Princeton University Press and University of Tokyo Press. XII (1976).
\bibitem[HHR16]{HHR}
M. Hill, M. Hopkins and D. Ravenel. {\em On the nonexistence of elements of Kervaire invariant one}, Ann. of Math. (2) 184 (2016), 1 - 262.
\bibitem[HMS93]{HMS93}
C. Hog-Angeloni, W. Metzler and A. J. Sieradski. {\em Two-Dimensional Homotopy and Combinatorial Group Theory,} London Math. Soc. Lecture Note Ser. 197, Cambridge University Press (1993).
\bibitem[Jo03]{Jo03} 
F. E. A. Johnson. {\em Stable Modules and the D(2)-Problem}, LMS Lecture Notes Series 301 (2003).
\bibitem[JR78]{Jones} 
J. Jones and E. Rees. {\em Kervaire's invariant for framed manifolds}, Proc. Symp. Pure Math. 32 (1978), 141--147.
\bibitem[Ke79]{Ke79}
C. Kearton. {\em Factorisation is not unique for 3-knots},
Indiana Univ. Math. J. 28, 451-452 (1979). 
\bibitem[Ke80]{Ke80}
C. Kearton. {\em 
The factorisation of knots},
Low-dimensional topology, Proc. Conf., Bangor/Engl. 1979, Vol. 1, Lond. Math. Soc. Lect. Note Ser. 48, 71-80 (1982). 
\bibitem[KM63]{KM63}
M. Kervaire and J. Milnor. {\em Groups of Homotopy Spheres: I},
Annals of Mathematics 77 (1963), 504--537.
\bibitem[Ki69]{Ki69}
R. Kirby. {\em Stable homeomorphisms and the annulus conjecture}, Annals of
Math. 89 (1969), 575--582.
\bibitem[Kn29]{Kn29}
H. Kneser. {\em Geschlossene Fl\"achen in dreidimensionalen Mannigfaltigkeiten}, Jber.
Deutsch. Math.-Verein. 38 (1929), 248--260.
\bibitem[Ko93]{Kosinski}
A. Kosinski. {\em Differential Manifolds}, Dover Publications, 1993.
\bibitem[KLT95]{KLT95}
M. Kreck, W. L\"uck and P. Teichner,
{\em Counterexamples to the Kneser conjecture in dimension four},
 Comment.\ Math.\ Helvetici 70 (1995), 423--433.
\bibitem[KS84]{KS84}
M. Kreck and J. A. Schafer. {\em Classification and stable classification of manifolds: some examples}, Comment. Math. Helv. 59 (1984), 12--38.
\bibitem[Kr99]{Kr99} M. Kreck. {\em Surgery and duality}, Ann. of Math. (2) 149 (1999), 707--754.
\bibitem[Ku34]{Ku34}
A. G. Kurosh. {\em Die Untergruppen der freien Produkte von beliebigen Gruppen}, Math. 
Annalen 109 (1934), 647--660. 
\bibitem[Ku60]{Ku60}
A. G. Kurosh. {\em  The theory of groups}, Volume 2,  Chelsea Publishing Co. (1960).
\bibitem[LS77]{LS77}
R. Lyndon and P. Schupp. {\em Combinatorial group theory}, Springer Verlag (1977).
\bibitem[MP19]{MP19} 
W. Mannan and T. Popiel. {\em An exotic presentation of $Q_{28}$}, arXiv:1901.10786 (2019).
\bibitem[Ma09]{Ma09}
W. H. Mannan. {\em Realizing algebraic 2-complexes by CW-complexes}, Math. Proc. Cam. Phil. Soc. 146 (3) (2009), 671--673.
\bibitem[Me76]{Me76} W. Metzler. {\em \"{U}ber den Homotopietyp zweidimensionaler CW-Komplexe und Elementartransformationen bei Darstellungen von Gruppen durch Erzeugende und definierende Relationen}, J. reine u. angew. Math. 285 (1976), 7--23.
\bibitem[Mi62]{Mi62}
J. Milnor. {\em A unique factorisation theorem for 3-manifolds},
Amer. J. Math. 84 (1962), 1-7.
\bibitem[Mi57]{Mi57} 
J. Milnor. {\em Groups which act on $S^n$ without fixed points}, Amer. J. Math. 79 (3) (1957), 623--630.
\bibitem[Mo52]{Mo52}
E. Moise. {\em Affine structures in~$3$-manifolds V. The triangulation theorem and Hauptvermutung}, Ann. of Math. 56 (1952), 96--114.
\bibitem[Mo77]{Mo77}
E. Moise. {\em  Geometric Topology in Dimensions 2 and 3}, Graduate Texts in Mathematics, vol. 47, Springer, New York-Heidelberg, 1977.
\bibitem[Mo73]{Mo73}
E. A. Molnar. {\em Relation between cancellation and localization for complexes with two cells}, J. Pure Appl. Algebra 3 (1973), 141--158.
\bibitem[Ni18]{Ni18} 
J. Nicholson. {\em A cancellation theorem for modules over integral group rings}, Math. Proc. Cambridge Philos. Soc., to appear, arXiv:1807.00307 (2018).
\bibitem[Ni20]{Ni20}
J. Nicholson. {\em Cancellation for $(G,n)$-complexes and the Swan finiteness obstruction}, arXiv:2005.01664 (2020).
\bibitem[Ni19]{Ni19}
J. Nicholson. {\em On CW-complexes over groups with periodic cohomology}, arXiv:1905.12018 (2019).
\bibitem[Qu82]{Qu82}
F. Quinn. {\em Ends of maps. III: Dimensions 4 and 5},
J. Differ. Geom. 17 (1982), 503--521.
\bibitem[Qu83]{Qu83}
F. Quinn. {\em The stable topology of 4-manifolds}, Topology Appl. 15 (1983), 71--77.
\bibitem[Rad26]{Rad26}
T. Rad\'o. {\em \"Uber den Begriff der Riemannschen Fl\"ache}, Acta Szeged~2 (1926), 101--121.
\bibitem[Ram16]{Ram16}
K. Ramesh. {\em Inertia groups and smooth structures of $(n-1)$-connected $2n$-manifolds}, Osaka J. Math. 53 (2016), 303--319.
\bibitem[RS72]{RS72}
C. P. Rourke and B. J. Sanderson. {\em
Introduction to piecewise-linear topology}, 
Ergebnisse der Mathematik und ihrer Grenzgebiete.  Springer-Verlag (1972). 
\bibitem[Sch49]{Sch49}
H. Schubert. {\em  Die eindeutige Zerlegbarkeit eines Knotens in Primknoten},
S.-B. Heidelberger Akad. Wiss. Math.-Nat. Kl. 1949, (1949). no. 3, 57--104. 
\bibitem[Sc05]{Sc05}
A. Scorpan. {\em The wild world of 4-manifolds}, Providence, RI: American Mathematical Society (2005).
\bibitem[Si77]{Si77} A. J. Sieradski. {\em A semigroup of simple homotopy types}, Math. Zeit. 153 (1977), 135--148.
\bibitem[Sm62]{Sm62}
S. Smale. {\em On the structure of manifolds}, Amer. J. Math. 84 (1962), 387--399. 
\bibitem[St63]{Stong} 
R. E. Stong. {\em Determination of $H^{\ast} (BO\langle k\rangle,\mathbb Z/2)$ and $H^{\ast}(BU\langle k\rangle, \mathbb Z/2)$}, Trans. Amer. Math. Soc. 107, no. 3 (1963), 526-544.
\bibitem[St68]{St68} J. R. Stallings. {\em On torsion free groups with infinitely many ends}, Ann. of Math. 88 (1968), 312--334.
\bibitem[Sw69]{Sw69} R. G. Swan. {\em Groups of cohomological dimension one}, J. of Algebra. 12 (1969), 585--610.
\bibitem[To62]{To62} 
H. Toda, {\em Composition methods in homotopy groups of spheres}, Princeton University Press, 1962.
\bibitem[Wa62]{Wa62}
C. T. C. Wall. {\em Classification of $(n-1)$-connected 2n-manifolds}, Ann. of
Math. 75 (1962), 163--189.
\bibitem[Wa64]{Wa64}
C. T. C. Wall. {\em On simply-connected 4-manifolds}, J. London Math. Soc. 39 (1964), 141--149.
\bibitem[Wa65]{Wa65} C. T. C. Wall. {\em Finiteness conditions for CW Complexes}, Ann. of Math. 81 (1965), 56--69.
\bibitem[Wa66a]{Wa66} C. T. C. Wall. {\em Classification problems in Differential Topology - IV Thickenings}, Topology 5 (1966), 73--94.
\bibitem[Wa66b]{Wa66b} C. T. C. Wall. {\em Finiteness conditions for CW complexes}, II. Proc. Roy. Soc. Ser. A, 295 (1966), 129--139.
\bibitem[Wa67]{Wall67}
C. T. C. Wall. {\em Classification problems in differential topology. VI. Classification of $(s-1)$-connected $(2s+1)$-manifolds}. 
Topology 6 (1967), 273-296.
\bibitem[Wa16]{Wa16}
C. T. C. Wall. {\em 
Differential topology}, 
Cambridge Studies in Advanced Mathematics 156.  Cambridge University Press (2016).
\bibitem[WX17]{WX}
G. Wang and Z. Xu, \emph{The triviality of the 61-stem in the stable homotopy groups of spheres}.
Ann. of Math. (2) 186 (2017), no. 2, 501 - 580. 
\bibitem[Whi78]{Whi78}
G. W. Whitehead. {\em Elements of Homotopy Theory},  Graduate Texts in Mathematics 61, Springer-Verlag, New York, (1978).
\bibitem[Whi61]{Wh61}
J. H. C. Whitehead. {\em Manifolds with transverse fields in euclidean space}, Ann. of Math. (2) 73
(1961), 154-212.
\bibitem[Wi74]{Wilkens}
D. L. Wilkens. {\em On the inertia group of certain manifolds}, J. London Math. Soc. {9} (1974/75), 537--548.
\end{thebibliography}
\end{document}